
%
%
\newcommand\R{\mathbb{R}}
\newcommand\bbS{\mathbb{S}}
\newcommand\bbM{\mathbb{M}}
%
%
\newcommand\cC{\mathcal{C}}
\newcommand\cD{\mathcal{D}}
\newcommand\cA{\mathcal{A}}
\newcommand\cT{\mathcal{T}}
\newcommand\cP{\mathcal{P}}
\newcommand\cV{\mathcal{V}}
\newcommand{\cF}{\mathcal{F}}
%
%
\newcommand\mt{\mathtt{t}}
%
%
\newcommand\bn{\boldsymbol{n}}
\newcommand\bF{\boldsymbol{f}}
\newcommand\bg{\boldsymbol{g}}
\newcommand\bu{\boldsymbol{u}}
\newcommand\bv{\boldsymbol{v}}
\newcommand\be{\boldsymbol{e}}
\newcommand\beps{\boldsymbol{\varepsilon}}
\newcommand\bsig{\boldsymbol{\sigma}}
\newcommand\bnabla{\boldsymbol{\nabla}}
\newcommand\btau{\boldsymbol{\tau}}
\newcommand\bze{\boldsymbol{\zeta}}
\newcommand\bet{\boldsymbol{\eta}}
\newcommand\bgam{\boldsymbol{\gamma}}

%
%
%
\newcommand\bdiv{\mathop{\mathbf{div}}\nolimits}
%
%
\newcommand{\jump}[1]{{\llbracket{#1}\rrbracket}}
\newcommand{\mean}[1]{\left\{\kern-1.ex\left\{ #1 \right\}\kern-1.ex\right\}}		

\documentclass[11pt]{article}
\usepackage[english]{babel}

\usepackage{textcomp}
\usepackage{lmodern}

\usepackage[a4paper,tmargin=1.35cm,bmargin=2.cm, rmargin=1.3cm,lmargin=1.3cm]{geometry}

\usepackage{amssymb, amsmath, mathtools, amsthm, amsfonts}

\usepackage{graphicx}
\usepackage[table]{xcolor}
\usepackage{booktabs, multirow}
\usepackage{stmaryrd}
\usepackage{enumitem}

\definecolor{lightblue}{rgb}{0.22,0.45,0.70}
\definecolor{lightgreen}{rgb}{0.22,0.50,0.25}
\usepackage[colorlinks=true,breaklinks=true,linkcolor=lightblue,citecolor=lightgreen,urlcolor=lightblue]{hyperref}
\usepackage[numbers]{natbib}

\allowdisplaybreaks

\DeclareMathOperator*{\esssup}{ess\,sup}

\DeclarePairedDelimiter\norm{\lVert}{\rVert}
\DeclarePairedDelimiter{\inner}{(}{)}
\DeclarePairedDelimiter{\set}{\{}{\}}
\DeclarePairedDelimiter{\dual}{\langle}{\rangle}
%
%
\newtheorem{remark}{Remark}[section]
\newtheorem{lemma}{Lemma}[section]
\newtheorem{theorem}{Theorem}[section]
\newtheorem{prop}{Proposition}[section]
\newtheorem{corollary}{Corollary}[section]
\newtheorem{assumption}{Assumption}[section]
\numberwithin{equation}{section}
\numberwithin{figure}{section}
\numberwithin{table}{section}

\xdefinecolor{mygrey}{rgb}{0.65,0.65,0.6375}
\newcolumntype{g}{ >{\columncolor{mygrey}} c }
\setlength{\tabcolsep}{4.5pt}

\date{}
\title{Symmetric mixed discontinuous Galerkin methods for linear viscoelasticity\thanks{This research was  supported by Spain's Ministry of Economy Project PID2020-116287GB-I00, by the Monash Mathematics Research Fund S05802-3951284, and by the Australian Research Council through the Discovery Project grant DP220103160. }}

\author{ 
{\sc Salim Meddahi}\thanks{Facultad de Ciencias, Universidad de Oviedo, Federico Garc\'ia Lorca,  18, 33007-Oviedo, Spain, e-mail: {\tt salim@uniovi.es}.} 
\quad 
and 
\quad 
{\sc Ricardo Ruiz-Baier}\thanks{School of Mathematics, Monash University, 9 Rainforest Walk, Melbourne, Victoria 3800, Australia;  and   Universidad Adventista de Chile, Casilla 7-D, Chill\'an, Chile, e-mail: {\tt ricardo.ruizbaier@monash.edu}.}
}

\begin{document}

\maketitle

\begin{abstract}
\noindent We propose and rigorously analyse semi- and fully discrete discontinuous Galerkin methods for an initial and boundary value problem  describing inertial viscoelasticity in terms of elastic and viscoelastic stress components, and with mixed boundary conditions. The arbitrary-order spatial discretisation imposes strongly  the symmetry of the stress tensor, and it is combined with a Newmark trapezoidal rule as time-advancing scheme. We establish stability and convergence properties, and the theoretical findings are further confirmed via illustrative numerical simulations in 2D and 3D. 
  
\end{abstract}
\medskip

\noindent
\textbf{Mathematics Subject Classification:} 65M30, 65M12, 65M15, 74H15. 
\medskip

\noindent
\textbf{Keywords:} 
Mixed finite elements, linear viscoelasticity, stress-based formulation, error estimates.

\section{Introduction}  
\textbf{Scope and related work.}
Viscoelastic models can be used to describe a large class of conventional and unconventional materials with time-dependent mechanical behaviour, including polymers and elastomers, metals at high temperature, and, notably, some types of biological tissues (comprising extracellular matrix, cells, cell clusters, and so on) \cite{klatt}. The key constituents of collagen and elastin inherently exhibit viscoelastic and elastic characteristics (a viscoelastic material will eventually return to its original shape upon the removal of any deforming force). The combination of these two properties is observed in many other materials under mechanical loads  \cite{salencon}, and they are utilised in a wide range of applications to damp mechanical shocks, to attenuate resonant vibrations, and  to control noise propagation. Viscoelastic material laws are defined fitting tests of important phenomena such as creep compliance, rate-dependency of stress, hysteresis, and stress-relaxation. One of the simplest models  involving strain history in the constitutive equations is Zener's (or \emph{standard linear}) model in  viscoelasticity \cite{zener}, which is able to replicate creep-recovery and stress-relaxation \cite{salencon}. It consists of a spring and a Maxwell component in parallel, where, in turn, the Maxwell component is an assemblage of one spring and one viscous element (dashpot) in serial.  As in, e.g., \cite{Morro, gurtin}, based on the Boltzmann superposition principle, it is possible to use the Volterra integral form of the typical constitutive law for Zener's model  to eliminate the stress and formulate the viscoelastic system as an integro-differential problem written only in terms of displacement. Pure displacement formulations have been also studied from the numerical analysis viewpoint, and a number of contributions are available regarding continuous and discontinuous Galerkin methods including, for instance, \cite{fernandez11,jang21,riv1, riv2, Shaw1}. Even if the acceleration term endows the displacement with additional time regularity, the analysis is still far from trivial. 

The analysis  of viscoelasticity based only on a differential representation is also feasible. It suffices to consider a dual-mixed framework where the dual variable, the stress, enters the system together with the primal unknown (displacement).  To the authors' knowledge, this has been first proposed in \cite{Becache}, introducing a stress splitting  into elastic and viscoelastic contributions and focusing on first-order approximation of stress and using special grids. The approach was later extended in \cite{lee,rognes2,rognes,yuan} to high-order and to more general finite element discretisations. The analysis of these formulations is based on a dynamical system approach since the systems are of first-order type involving stress and velocity. In contrast, here we follow  the methods advanced in \cite{gmm-2020, MarquezMeddahi}, where one re-formulates the problem as a second-order hyperbolic PDE. This is achieved by using  the momentum balance to remove the acceleration, leading to a second-order in time of grad-div type written solely in terms of the Cauchy stress, which is separated into elastic and viscoelastic parts. 
 
 An important issue in the construction of stable mixed methods for elasticity is the preservation of symmetry for the Cauchy stress. Starting from the foundational work  \cite{ArnoldFalkWinther}, there has been an abundant body of developments in the design and analysis of conforming mixed finite elements on simplicial and rectangular meshes for both 2D and 3D; see, e.g., \cite{scotCockburn, ArnoldAwanouWinther, hu}. However, even if the simultaneous imposition of H(div)-conformity and strong symmetry of stress is possible, it typically entails a very large number of degrees of freedom and schemes that are not trivial to implement using standard finite element libraries. The usual ways to overcome this difficulty consist in either, (a) maintaining H(div)-conformity and relaxing the symmetry constraint, which comes at the expense of adding the rotation tensor as additional field variable playing  the role of a Lagrange multiplier enforcing the angular momentum conservation constraint, see for example \cite{ArnoldFalkWinther, CGG, GG} and the references therein;  or (b) to renounce H(div)-conformity and use non-conforming or DG approximations as in, e.g., \cite{ArnoldAwanouWintherNC, GGNC, Wu}.  
  
 Motivated  by the ability of DG methods to handle efficiently $hp$-adaptive strategies and to facilitate the  implementation of high order methods, we opt herein for an H(div)-based interior penalty method to solve the standard linear solid model of viscoelasticity. The space discretisation strategy amounts to approximate each stress component by symmetric tensors with piecewise polynomial entries of arbitrary degree $k\geq 1$, in 2D and 3D. We point out that our continuous formulation only asks the Cauchy stress (the sum of the elastic and viscoelastic stress components) to be H(div)-conforming. Consequently, the present approach only penalises the jumps of the normal total stress on the internal facets, therefore requiring fewer coupling conditions between the local degrees of freedom than penalising each stress component separately. We show that the resulting mixed DG semi-discrete scheme is robust and accurate for general domains and boundary conditions, and for heterogeneous media. Additionally, we prove that the fully discrete scheme relying on the classical second-order implicit Newmark method is  stable and  convergent. Finally, under piecewise regularity assumptions on the exact solution of the problem, we derive optimal asymptotic error estimates in a suitable H(div)-DG norm. 
   
\medskip 
\noindent\textbf{Outline.}
The contents of this paper have been organized in the following manner. The remainder of this section contains preliminary notational conventions and definition of useful functional spaces. Section~\ref{sec:model} presents the precise definition of Zener's model problem along with the derivation of its weak formulation in mixed form and recalling its unique solvability. Preliminary definitions and auxiliary tools needed for the  analysis in discontinuous finite dimensional spaces are collected in Section~\ref{sec:FE}. The precise definition and the analysis of convergence for a semi-discrete mixed method are detailed in Section~\ref{sec:semi-discrete}, and the fully discrete case is treated in Section~\ref{sec:fully-discrete}. Several numerical results are presented in Section~\ref{sec:results}, confirming the expected rates of convergence for different parameter sets including the nearly incompressible regime, and illustrating the use of the method in relatively simple problems of applicative relevance. 

\medskip 
\noindent\textbf{Recurrent notation and Sobolev spaces.} 
We denote the space of real matrices of order $d\times d$ by $\bbM$, and let $\bbS:= \set{\btau\in \bbM;\ \btau = \btau^{\mt} } $  be the subspace of symmetric matrices, where $\btau^{\mt}:=(\tau_{ji})$ stands for the transpose of $\btau = (\tau_{ij})$. The component-wise inner product of two matrices $\bsig, \,\btau \in\bbM$ is defined by $\bsig:\btau:= \sum_{i,j}\sigma_{ij}\tau_{ij}$. 

Let $D$ be a polyhedral Lipschitz bounded domain of $\R^d$ $(d=2,3)$, with boundary $\partial D$. Along this paper we convene to apply all differential operators row-wise.  Hence, given a tensorial function $\bsig:D\to \bbM$ and a vector field $\bu:D\to \R^d$, we set the divergence $\bdiv \bsig:D \to \R^d$, the   gradient $\bnabla \bu:D \to \bbM$, and the linearised strain tensor $\beps(\bu) : \Omega \to \bbS$ as
\[
(\bdiv \bsig)_i := \sum_j   \partial_j \sigma_{ij} \,, \quad (\bnabla \bu)_{ij} := \partial_j u_i\,,
\quad\hbox{and}\quad \beps(\bu) := \frac{1}{2}\left[\bnabla\bu+(\bnabla\bu)^{\mt}\right].
\]
 For $s\in \R$, $H^s(D,E)$ stands for the usual Hilbertian Sobolev space of functions with domain $D$ and values in E, where $E$ is either $\R$, $\R^d$ or $\bbM$. In the case $E=\R$ we simply write $H^s(D)$. The norm of $H^s(D,E)$ is denoted $\norm{\cdot}_{s,D}$ and the corresponding semi-norm $|\cdot|_{s,D}$, indistinctly for $E=\R,\R^d,\bbM$. We use the convention  $H^0(D, E):=L^2(D,E)$ and let $(\cdot, \cdot)_D$ be the inner product in $L^2(D, E)$, for $E=\R,\R^d,\bbM$, namely,
\begin{equation}\label{L2}
	(\bu, \bv)_D:=\int_D \bu\cdot\bv,\ \forall \bu,\bv\in L^2(D,\R^d),\quad  (\bsig, \btau)_D:=\int_D \bsig:\btau,\ \forall \bsig, \btau\in L^2(D,\bbM).
\end{equation}
The space of tensors in $L^2(D, \bbS)$ with divergence in $L^2(D, \R^d)$ is denoted $H(\bdiv, D, \bbS)$. We denote the corresponding norm $\norm{\btau}^2_{H(\bdiv,D)}:=\norm{\btau}_{0,D}^2+\norm{\bdiv\btau}^2_{0,D}$. Let $\bn$ be the outward unit normal vector to $\partial D$. The Green formula
\[
(\btau, \beps(\bv))_D + (\bdiv \btau, \bv)_D = \int_{\partial D} \btau\bn\cdot \bv\qquad  \forall \bv \in H^1(D,\R^d),
\] 
can be used to extend the normal trace operator $ \cC^\infty(\overline D, \bbS)\ni \btau \to (\btau|_{\partial \Omega})\bn$ to a linear continuous mapping $(\cdot|_{\partial \Omega})\bn:\, H(\bdiv, D, \bbS) \to H^{-\frac{1}{2}}(\partial D, \R^d)$, where $H^{-\frac{1}{2}}(\partial D, \R^d)$ is the dual of $H^{\frac{1}{2}}(\partial D, \R^d)$.

\medskip 
\noindent\textbf{Sobolev spaces for time dependent problems.} Since we will deal with a space-time domain problem, besides the Sobolev spaces defined above, we need to introduce spaces of functions acting on a bounded time interval $(0,T)$ and with values in a separable Hilbert space $V$, whose norm is denoted here by $\norm{\cdot}_{V}$. In particular, for $1 \leq p\leq \infty$, $L^p(V)$ is the space of classes of functions $f:\ (0,T)\to V$ that are B\"ochner-measurable and such that $\norm{f}_{L^p(V)}<\infty$, with 
\[
\norm{f}^p_{L^p(V)}:= \int_0^T\norm{f(t)}_{V}^p\, \text{d}t\quad \hbox{for $1\leq p < \infty$},
\quad\hbox{and} \quad \norm{f}_{L^\infty(V)}:= \esssup_{[0, T]} \norm{f(t)}_V.
\]
We use the notation $\mathcal{C}^0(V)$ for the Banach space consisting of all continuous functions $f:\ [0,T]\to V$. More generally, for any $k\in \mathbb{N}$, $\mathcal{C}^k(V)$ denotes the subspace of $\mathcal{C}^0(V)$ of all functions $f$ with (strong) derivatives $\frac{\text{d}^j f}{\text{d}t^j}$ in $\mathcal{C}^0(V)$ for all $1\leq j\leq k$. In what follows, we will use indistinctly the notations $\dot{f}:= \frac{\text{d} f}{\text{d}t}$ and $\ddot{f} := \frac{\text{d}^2 f}{\text{d}t^2} $ to express the first and second derivatives with respect to   $t$. Furthermore,   we consider the Sobolev space
\[
W^{1, \infty}(V):= \left\{f: \ \exists g\in L^\infty(V)
\ \text{and}\ \exists f_0\in V\ \text{such that}\
 f(t) = f_0 + \int_0^t g(s)\, \text{d}s\quad \forall t\in [0,T]\right\},
\]
and define the space $W^{k, \infty}(V)$ recursively  for all $k\in\mathbb{N}$.

Throughout this paper, we shall use the letter $C$ to denote a generic positive constant independent of the mesh size  $h$ and the time discretisation parameter $\Delta t$, that may stand for different values at its different occurrences. Moreover, given any positive expressions $X$ and $Y$ depending on $h$ and $\Delta t$, the notation $X \,\lesssim\, Y$  means that $X \,\le\, C\, Y$.

\section{A mixed variational formulation of the Zener model}\label{sec:model}
We aim to study  the dynamical equation of motion 
\[
\rho\ddot{\bu}-\bdiv\bsig =\bF \quad \text{in $\Omega\times (0, T]$},
\]
for a viscoelastic body represented by a polyhedral Lipschitz domain  $\Omega\subset \mathbb R^d$ ($d=2,3$). Here,  $\bu:\Omega\times[0, T] \to \R^d$ is the displacement field, $\bsig:\Omega\times[0,T]\to \bbS$ is the Cauchy stress tensor and $\bF:\Omega\times[0, T] \to \R^d$ represents the body force. We assume that the linearised strain tensor $\beps(\bu)$ is related to the stress tensor through Zener's constitutive law for viscoelasticity (see \cite{salencon}):
\begin{equation}\label{oldConstitutive}
	\bsig +  \omega \dot{\bsig} = \cC \beps(\bu) + \omega \cD \beps(\dot{\bu}) \quad \text{in $\Omega\times (0, T]$}.
\end{equation}

The symmetric and positive definite fourth-order tensors  $\cC$ and $\cD$ are such that $\cD - \cC$ is also positive definite in order to guarantee that the system is dissipative. We assume that the relaxation time $\omega\in L^\infty(\Omega)$ is positive and bounded away from zero:  $\omega(x) \geq \omega_0>0$ \textit{a.e.} in $\Omega$. Moreover, we assume that there exists a polygonal/polyhedral disjoint partition $\big\{\Omega_j,\ j= 1,\ldots,J\big\}$ of  $\bar \Omega$  such that $\rho|_{\Omega_j}:= \rho_j>0$ for all $j=1,\ldots,J$ and let $\rho^+:= \max_j \rho_j$ and $\rho^-:= \min_j \rho_j$. 

We assume mixed loading boundary conditions: the structure is clamped  ($\bu = \mathbf 0$) on $\Gamma_D  \times (0, T]$ where the boundary subset $\Gamma_D\subset\Gamma := \partial \Omega$ is of positive surface measure, and  free of stress ($\boldsymbol{\bsig}\bn = \mathbf 0$) on $ \Gamma_N  \times (0, T]$, where $\Gamma_N:= \Gamma \setminus \Gamma_D$. By $\bn$ we denote the exterior unit normal vector on $\Gamma$. Finally, we assume the initial conditions:
\begin{equation}\label{init1}
	\bu(0) = \bu_0 \quad\text{in $\Omega$}, \quad 
 \dot{\bu}(0) = \bu_1 \quad\text{in $\Omega$},   \quad \text{and} \quad 
 \bsig(0) = \bsig_0 \quad\text{in $\Omega$}.
\end{equation}

With the purpose of having the (total) stress tensor $\bsig$ as a primary unknown, we additively decompose this variable into a purely elastic component $\bgam:= \cC \beps(\bu)$ and a viscoelastic component $\bze:= \bsig - \bgam$, which allows us to deduce from the constitutive law \eqref{oldConstitutive} that  
\[
 \dot \bze + \tfrac{1}{\omega} \bze =  (\cD - \cC) \beps(\dot \bu).
\]
Hence, adopting the notations $\cA := \cC^{-1}$ and $\cV:= (\cD - \cC)^{-1}$, the model problem can be recast in terms of $\bu$, $\bgam$, and $\bze$, as follows
 \begin{align}\label{split123}
 \begin{split}
 \rho\ddot{\bu}-\bdiv(\bgam +  \bze) &=\bF  \quad\text{in $\Omega\times (0, T]$}, 
 \\[0.25ex]
 (\bgam +  \bze) &= (\bgam +  \bze)^{\mt} \quad \text{in $\Omega\times (0, T]$},
 \\[0.25ex]
 \cA \ddot{\bgam} &= \beps(\ddot{\bu})  \quad \text{in $\Omega\times (0, T]$}, 
 \\[0.25ex]
  \cV \ddot{\bze} +  \tfrac{1}{\omega}\cV \dot{\bze} &= \beps(\ddot{\bu}) \quad \text{in $\Omega\times (0, T]$},
 \\[0.25ex]
 \bu &= \mathbf 0 \quad  \text{on $\Gamma_D\times (0, T]$},
 \\[0.25ex]
 (\bgam + \bze)\bn &= \mathbf 0 \quad  \text{on $\Gamma_N\times (0, T]$}.
\end{split}
\end{align}
One readily notes that the material law splits now into two parts, one for each component of the total stress. The main  unknown consists in a pair of tensors $(\bgam, \bze)\in L^2(\Omega,\bbM\times \bbM)$ such that $\bgam +  \bze\in H(\bdiv,\Omega, \bbS)$. The  traction boundary condition on $\Gamma_N$ has to be included in an essential manner, for which we require the following closed subspace of $H(\bdiv, \Omega, \bbS)$  
\[
H_N(\bdiv, \Omega, \bbS) := \set*{\btau\in H(\bdiv, \Omega, \bbS); \quad 
	\left\langle\btau\bn,\bv\right\rangle_{\Gamma}= 0 
	\quad \text{$\forall\bv\in H^{1/2}(\partial\Omega,\R^d)$,\, $\bv|_{\Gamma_D} = \mathbf{0}$}},
\]
where $\dual{\cdot, \cdot}_\Gamma$ holds for the duality pairing between $H^{1/2}(\Gamma,\R^d)$ and $H^{-1/2}(\Gamma,\R^d)$. We then consider the energy space  
\[
 \mathcal{H}^+_{\text{sym}}:= 
\Big\{ (\bet, \btau) \in L^2(\Omega,\bbM\times \bbM):\ \bet + \btau \in H_N(\bdiv,\Omega, \bbS)  \Big\},
\]
endowed with the Hilbertian norm  
\[
\norm*{(\bet, \btau)}^2_{\mathcal{H}^+_{\text{sym}}}:=  \norm*{\bet}^2_{0,\Omega} + \norm*{\btau}^2_{0,\Omega}  + \norm*{\bdiv (\bet +  \btau) }^2_{0,\Omega}. 
\] 

In what follows, when $D = \Omega$ in \eqref{L2}, we simply denote the $L^2$-inner product by $\inner*{\cdot,\cdot}$. We  consider an arbitrary $(\bet, \btau)\in \mathcal{H}^+_{\text{sym}}$, test the third and fourth rows of \eqref{split123} with $\bet$ and $\btau$ and add the resulting equations  to get
\begin{equation}\label{const+}
(\cA \ddot{\bgam},\bet) + \Big( \cV (\ddot{\bze} + \tfrac{1}{\omega}\dot{\bze}) , \btau\Big)  = \Big(\beps(\ddot{\bu}), \bet + \btau\Big) = \Big(\bnabla \ddot\bu , \bet + \btau\Big),
\end{equation} 
where the last identity follows from the symmetry of $\bet + \btau$. 
Next, we integrate by parts in the right-hand side of \eqref{const+} and take into account the boundary conditions on $\Gamma_N\times (0, T]$ to obtain
\begin{equation}\label{var0}
(\cA \ddot{\bgam},\bet) + \Big( \cV (\ddot{\bze} + \tfrac{1}{\omega}\dot{\bze}) , \btau\Big)  =   
- \Big(\ddot{\bu}, \bdiv(\bet + \btau)\Big).
\end{equation} 
Substituting back $\ddot{\bu} =  \rho^{-1}\big(\bF + \bdiv(\bgam + \bze)\big)$ into \eqref{var0} yields  
\begin{equation*}
A\Big( (\ddot\bgam ,  \ddot\bze), (\bet ,   \btau) \Big)  + (\tfrac{1}{\omega}\cV\dot\bze,\btau)
+ \Big(\tfrac{1}{\rho}\bdiv(\bgam +  \bze) , \bdiv(\bet +  \btau) \Big)  = - \Big(\tfrac{1}{\rho}\bF , \bdiv(\bet +  \btau) \Big).
\end{equation*}
for all $(\bet, \btau )\in \mathcal{H}^+_{\text{sym}}$, 
where    
\[
A\Big( (\bgam, \bze) ,(\bet, \btau)\Big) := (\cA\bgam, \bet) + (\cV \bze, \btau), \quad  (\bgam, \bze),\, (\bet, \btau) \in L^2(\Omega,\bbM\times \bbM).
\]
It is important to notice that, as a consequence of our hypotheses on $\cC$ and $\cD$, the bilinear form $A$ is symmetric, bounded and coercive, i.e., there exist positive constants $M$ and $\alpha$, depending only on $\cC$ and $\cD$, such that
\begin{align}\label{contA}
\Big| A\Big( (\bgam, \bze) ,(\bet, \btau)\Big) \Big| & \leq M \| (\bgam, \bze) \|_{0,\Omega} 
\|(\bet, \btau) \|_{0,\Omega} \qquad \forall \, (\bgam, \bze) ,(\bet, \btau) \in L^2(\Omega,\bbM\times \bbM), \\
\label{ellipA}
 A\Big( (\bet, \btau),(\bet, \btau) \Big) & \geq \alpha  
\|(\bet, \btau) \|^2_{0,\Omega} \qquad \forall  \,(\bet, \btau) \in L^2(\Omega,\bbM\times \bbM).
\end{align}

We let
\[ 
	\mathfrak{L}_{\text{sym}}^2 := \set*{ (\bet, \btau) \in L^2(\Omega,\bbM\times \bbM);\ \bet +  \btau\in L^2(\Omega, \bbS)},
	\]
and consider the following variational formulation of \eqref{split123}:  Given $\bF\in L^{2}(L^2(\Omega,\R^d))$, we look for $(\bgam, \bze)\in \cC^0(\mathcal{H}^+_{\text{sym} })\cap\cC^1( \mathfrak{L}_{\text{sym}}^2 )$ satisfying,  for all $(\bet, \btau) \in \mathcal{H}^+_{\text{sym} }$, 
\begin{align}\label{varFormR1-varFormR2}
\frac{\text{d}}{\text{d}t} \Big\{ A\Big( (\dot\bgam ,  \dot\bze), (\bet ,   \btau) \Big)  + (\tfrac{1}{\omega}\cV\bze, \btau)\Big\} 
+ \Big(\tfrac{1}{\rho}\bdiv(\bgam + \bze) , \bdiv(\bet +  \btau) \Big)  &= 
 - \Big(\tfrac{1}{\rho}\bF , \bdiv(\bet +  \btau) \Big) ,  
\nonumber\\
(\bgam(0), \bze(0)) = (\bgam_0, \bze_0), \qquad (\dot\bgam(0), \dot\bze(0)) &= (\bgam_1, \bze_1),
\end{align} 
 where $(\bgam_0, \bze_0)\in \mathcal{H}^+_{\text{sym}}$ and $(\bgam_1, \bze_1) \in \mathfrak{L}_{\text{sym}}^2$ are given by 
\begin{align*}
		\bgam_0 &:= \cC \beps(\bu_0),\quad   \bze_0 = \bsig_0 - \bgam_0,
\quad 
\bgam_1 := \cC \beps(\bu_1), \quad\bze_1 :=  \cD \beps(\bu_1) - \bgam_1 - \tfrac{1}{\omega}\bze_0.
\end{align*}  

Classical techniques for  second order evolution problems  with energy methods \cite{Evans, Renardy} have been successfully applied in \cite[Theorem 5.2, Lemma 5.1]{gmm-2020} to prove  the well-posedness of   \eqref{varFormR1-varFormR2}. 

\begin{theorem}\label{theorem-R1-R2}
Assume that $\bF \in H^1(L^2(\Omega,\R^d))$. Then,  problem \eqref{varFormR1-varFormR2} admits a unique solution. Moreover, there exists a constant $C>0$  such that,
\[
\max_{t\in [0,T]}\norm*{(\bgam, \bze)(t)}_{\mathcal{H}^+_{\text{sym}}} + \max_{t\in [0,T]}\norm*{(\dot\bgam, \dot\bze)(t)}_{0,\Omega} \leq C \Big(  \norm*{\bF}_{H^1(L^2(\Omega,\R^d))} 
+ \norm*{(\bgam_0, \bze_0)}_{\mathcal{H}^+_{\text{sym}}} + \norm*{(\bgam_1, \bze_1)}_{0,\Omega}\Big). 
\]
\end{theorem}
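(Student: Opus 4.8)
The plan is to read \eqref{varFormR1-varFormR2} as a damped linear second-order evolution equation and to prove well-posedness by the Faedo--Galerkin method, with the energy identity supplying the stability bound, exactly along the lines of \cite[Lemma 5.1, Theorem 5.2]{gmm-2020} and the abstract machinery of \cite{Evans, Renardy}. Writing $U:=(\bgam,\bze)$ and testing against time-independent $(\bet,\btau)\in\mathcal{H}^+_{\text{sym}}$, the problem has the structure $\frac{\text{d}}{\text{d}t}\{A(\dot U,\cdot)+\mathcal{B}(U,\cdot)\}+\mathcal{S}(U,\cdot)=\ell(\cdot)$, with dissipation form $\mathcal{B}((\bgam,\bze),(\bet,\btau)):=(\tfrac{1}{\omega}\cV\bze,\btau)$, stiffness form $\mathcal{S}((\bgam,\bze),(\bet,\btau)):=(\tfrac{1}{\rho}\bdiv(\bgam+\bze),\bdiv(\bet+\btau))$, and load $\ell((\bet,\btau)):=-(\tfrac{1}{\rho}\bF,\bdiv(\bet+\btau))$. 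Three structural facts drive the whole argument: $A$ is symmetric and $L^2$-coercive by \eqref{contA}--\eqref{ellipA}; $\mathcal{S}$ is symmetric and positive semidefinite, but controls only $\norm{\bdiv(\bgam+\bze)}_{0,\Omega}$ and is blind to the individual $L^2$-sizes of $\bgam$ and $\bze$; and, crucially, $\mathcal{B}$ has the right sign, since $\cV=(\cD-\cC)^{-1}$ is positive definite and $\omega\ge\omega_0>0$ give $\mathcal{B}(\dot U,\dot U)=(\tfrac{1}{\omega}\cV\dot\bze,\dot\bze)\ge 0$, a genuine dissipation.

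First I would set up a Galerkin scheme on an increasing family of finite-dimensional subspaces $V_m\subset\mathcal{H}^+_{\text{sym}}$ with dense union, and project the initial data onto $V_m$. Because $A$ is $L^2$-coercive and all norms are equivalent on the finite-dimensional $V_m$, the mass matrix is invertible, so the Galerkin equations form a linear second-order ODE system with a unique smooth solution $U_m$ on $[0,T]$. Choosing the admissible time-dependent test function $\dot U_m(t)\in V_m$ and using the symmetry of $A$ and $\mathcal{S}$ produces the energy identity
\[
\tfrac{1}{2}\tfrac{\text{d}}{\text{d}t}\big[A(\dot U_m,\dot U_m)+\mathcal{S}(U_m,U_m)\big]+\mathcal{B}(\dot U_m,\dot U_m)=\ell(\dot U_m),
\]
in which $\mathcal{B}(\dot U_m,\dot U_m)\ge 0$ may be discarded to yield a differential inequality for the energy $E_m:=A(\dot U_m,\dot U_m)+\mathcal{S}(U_m,U_m)$, which controls $\norm{\dot U_m}_{0,\Omega}^2+\norm{\bdiv(\bgam_m+\bze_m)}_{0,\Omega}^2$.

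The main obstacle is the right-hand side $\ell(\dot U_m)=-(\tfrac{1}{\rho}\bF,\bdiv(\dot\bgam_m+\dot\bze_m))$, which pairs $\bF$ with the divergence of the \emph{velocity}, a quantity the energy $E_m$ does not control. The remedy, and the reason for the hypothesis $\bF\in H^1(L^2(\Omega,\R^d))$, is to integrate $\int_0^t\ell(\dot U_m)\,\text{d}s$ by parts in time, moving the time derivative onto $\bF$: this generates boundary-in-time terms of the form $(\tfrac{1}{\rho}\bF,\bdiv(\bgam_m+\bze_m))$ evaluated at $0$ and $t$, together with $\int_0^t(\tfrac{1}{\rho}\dot\bF,\bdiv(\bgam_m+\bze_m))\,\text{d}s$, all of which now involve only the divergence of the \emph{position} and are therefore bounded through $\mathcal{S}$, hence through $E_m$. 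A Young inequality absorbs the boundary term at time $t$ into $E_m(t)$, and Gr\"onwall's lemma then bounds $\max_{[0,T]}E_m$ by the data uniformly in $m$. The $L^2$-size of the position is finally recovered from the velocity by writing $U_m(t)=U_m(0)+\int_0^t\dot U_m\,\text{d}s$ and using $\norm{\dot U_m}_{0,\Omega}\lesssim E_m^{1/2}$, which closes the a priori bound in the full $\mathcal{H}^+_{\text{sym}}$-norm for $U_m$ and in $L^2$ for $\dot U_m$.

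With these uniform bounds I would extract a subsequence converging weakly-$*$ in $L^\infty(\mathcal{H}^+_{\text{sym}})$ for $U_m$ and in $L^\infty(\mathfrak{L}_{\text{sym}}^2)$ for $\dot U_m$, pass to the limit in the Galerkin equations tested against fixed basis elements and smooth temporal weights, and identify the limit as a solution of \eqref{varFormR1-varFormR2}; attainment of both initial data and the regularity $U\in\cC^0(\mathcal{H}^+_{\text{sym}})\cap\cC^1(\mathfrak{L}_{\text{sym}}^2)$ follow from the standard continuity lemma for such evolution problems (see \cite{Evans,Renardy}). Uniqueness comes from applying the same energy identity to the difference of two solutions, where $\bF=0$ and the data vanish: the sign of $\mathcal{B}$ forces $E\equiv 0$, whence $\dot U\equiv 0$ and $\bdiv(\bgam+\bze)\equiv 0$, and the integration argument for the position gives $U\equiv 0$. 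The stated stability estimate then follows by passing the uniform bound on $E_m$ to the limit via weak lower semicontinuity of the norms. Besides the time-integration trick, I expect the delicate points to be the rigorous justification of testing with $\dot U$ at the limit regularity in the uniqueness step (handled by a regularisation in time as in \cite{Evans}) and the derivation of the continuity in time of the limit.
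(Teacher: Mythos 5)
Your proposal is correct and follows essentially the same route as the paper: the paper does not prove Theorem~\ref{theorem-R1-R2} in situ but attributes it to the classical Faedo--Galerkin/energy-method argument for second-order evolution problems of \cite[Lemma 5.1, Theorem 5.2]{gmm-2020} (in the spirit of \cite{Evans,Renardy}), which is exactly what you reconstruct, including the decisive step of integrating the load term $-\big(\tfrac{1}{\rho}\bF,\bdiv(\dot\bgam+\dot\bze)\big)$ by parts in time so that the hypothesis $\bF\in H^1(L^2(\Omega,\R^d))$ can be exploited and the divergence lands on the position rather than the velocity. Your treatment of the remaining points (recovery of the $L^2$-norm of the position by time integration, weak-$*$ compactness, and the regularisation needed to justify testing with $\dot U$ in the uniqueness step) matches the standard technique invoked there, so there is nothing to flag.
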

\section{Finite element spaces and auxiliary results}\label{sec:FE}
We consider  a sequence $\{\mathcal{T}_h\}_h$ of shape regular meshes that subdivide the domain $\bar \Omega$ into  triangles/tetrahedra $K$ of diameter $h_K$. The parameter $h:= \max_{K\in \cT_h} \{h_K\}$ represents the mesh size of $\cT_h$.  We assume that $\mathcal{T}_h$ is aligned with the partition $\bar\Omega = \cup_{j= 1}^J \bar{\Omega}_j$ and that $\cT_h(\Omega_j) := \set*{K\in \cT_h;\ K\subset \Omega_j }$ is a shape regular mesh of $\bar\Omega_j$ for all $j=1,\cdots, J$ and  all $h$.
For all $s\geq 0$, we consider the broken Sobolev space    
\[
  H^s(\cup_j\Omega_j) := \set*{ v\in L^2(\Omega);\ v|_{\Omega_j}\in H^s(\Omega_j),\ \forall j =1,\ldots,J }
\]
corresponding to the partition $\bar\Omega = \cup_{j= 1}^J \bar{\Omega}_j$. 
Its vectorial and tensorial versions are denoted $H^s(\cup_j\Omega_j,\R^d)$ and $H^s(\cup_j\Omega_j,\bbM)$, respectively. Similarly, the broken Sobolev space with respect to the subdivision of $\bar \Omega$ into $\cT_h$ is  
\[
 H^s(\cT_h,E):=
 \set{\bv \in L^2(\Omega, E): \quad \bv|_K\in H^s(K, E)\quad \forall K\in \cT_h},\quad \text{for $E \in \set{ \R, \R^d, \bbM}$}. 
\]
For each $\bv:=\set{\bv_K}\in H^s(\cT_h,\R^d)$ and $\btau:= \set{\btau_K}\in H^s(\cT_h,\bbM)$ the components $\bv_K$ and $\btau_K$  represent the restrictions $\bv|_K$ and $\btau|_K$. When no confusion arises, the  subscripts will be dropped.

Hereafter, given an integer $m\geq 0$ and a domain $D\subset \mathbb{R}^d$, $\cP_m(D)$ denotes the space of polynomials of degree at most $m$ on $D$. We introduce the space   
\[
 \cP_m(\cT_h) := 
 \set{ v\in L^2(\Omega): \ v|_K \in \cP_m(K),\ \forall K\in \cT_h }
 \]
 of piecewise polynomial functions relatively to $\cT_h$. We also consider the space $\cP_m(\cT_h,E)$ of functions with values in $E$ and entries in $\cP_m(\cT_h)$, where $E$ is either $\R^d$, $\bbM$ or $\bbS$. 
 
Let us introduce now notations related to DG approximations of $H(\text{div})$-type spaces. We say that a closed subset $F\subset \overline{\Omega}$ is an interior edge/face if $F$ has a positive $(d-1)$-dimensional measure and if there are distinct elements $K$ and $K'$ such that $F =\bar K\cap \bar K'$. A closed subset $F\subset \overline{\Omega}$ is a boundary edge/face if there exists $K\in \cT_h$ such that $F$ is an edge/face of $K$ and $F =  \bar K\cap \partial \Omega$. We consider the set $\cF_h^0$ of interior edges/faces, the set $\cF_h^\partial$ of boundary edges/faces. 
We assume that  $\cF_h^\partial$ is compatible with the partition $\partial \Omega = \Gamma_D \cup \Gamma_N$ in the sense that, if 
$
\cF_h^D = \set*{F\in \cF_h^\partial:\, F\subset \Gamma_D}$  and $\cF_h^N = \set*{F\in \cF_h^\partial:\, F\subset \Gamma_N},
$
then $\Gamma_D = \cup_{F\in \cF_h^D} F$ and $\Gamma_N = \cup_{F\in \cF_h^N} F$. We denote   
\[
  \cF_h := \cF_h^0\cup \cF_h^\partial\qquad \text{and} \qquad \cF^*_h:= \cF_h^{0} \cup \cF_h^{N},
\]
and  we introduce the set $\cF(K):= \set{F\in \cF_h;\, F\subset \partial K}$ of edges/faces composing the boundary of $K\in \cT_h$.
 We will need the space (given on the skeletons of the triangulations $\cT_h$)  $L^2(\cF^*_h):= \bigoplus_{F\in \mathcal{F}^*_h} L^2(F)$. Its vector valued version is denoted $L^2(\cF^*_h,\R^d)$. Here again, the components $\bv_F$ of $\bv := \set{\bv_F}\in L^2(\cF^*_h,\R^d)$  coincide with the restrictions $\bv|_F$.  We endow $L^2(\cF^*_h,\R^d)$ with the inner product 
\[
(\bu, \bv)_{\cF^*_h} := \sum_{F\in \cF^*_h} \int_F \bu_F\cdot \bv_F\quad \forall \bu,\bv\in L^2(\cF^*_h,\R^d)
\]
and denote the corresponding norm $\norm*{\bv}^2_{0,\cF^*_h}:= (\bv,\bv)_{\cF^*_h}$. From now on, $h_\cF\in L^2(\cF_h)$ is the piecewise constant function defined by $h_\cF|_F := h_F$ for all $F \in \cF^*_h$ with $h_F$ denoting the diameter of edge/face $F$. 

Given  $\bv\in H^s(\cT_h,\R^d)$ and $\btau\in H^s(\cT_h,\bbM)$, with $s>1/2$, we define averages $\mean{\bv}\in L^2(\cF^*_h,\R^d)$ and jumps $\jump{\btau}\in L^2(\cF^*_h,\R^d)$ by
\[
 \mean{\bv}_F := (\bv_K + \bv_{K'})/2 \quad \text{and} \quad \jump{\btau}_F := 
 \btau_K \bn_K + \btau_{K'}\bn_{K'} 
 \quad \forall F \in \cF(K)\cap \cF(K'),
\]
with the conventions 
\[
 \mean{\bv}_F := \bv_K  \quad \text{and} \quad \jump{\btau}_F := 
 \btau_K \bn_K  
 \quad \forall F \in \cF(K),\,\, F\in \cF_h^N,
\]
where $\bn_K$ is the outward unit normal vector to $\partial K$.

For any $k\geq 1$, we consider $\mathcal{H}^{DG}_{\text{sym},h} := \cP_{k}(\cT_h,\bbS) \times \cP_{k}(\cT_h,\bbS)$ and let $\mathcal{H}^+_{\text{sym}}(h) :=\mathcal{H}^+_{\text{sym}} + \mathcal{H}^{DG}_{\text{sym},h}$.  Given $(\bet, \btau) \in \mathcal{H}^+_{\text{sym}}(h)$, we define $\bdiv_h (\bet + \btau) \in  L^2(\Omega,\R^d)$ by $\bdiv_h (\bet +  \btau)|_{K} := \bdiv (\bet_K + \btau_K)$ for all $K\in \cT_h$ and endow $\mathcal{H}^+_{\text{sym}}(h)$ with the norm
\begin{equation}\label{norm:sym}
 \norm*{(\bet, \btau)}^2_{\mathcal{H}^+_{\text{sym}}(h)} := \norm*{(\bet, \btau)}^2_{0,\Omega} + \norm*{\bdiv_h (\bet + \btau)}^2_{0,\Omega} + \norm*{h_{\cF}^{-1/2} \jump{\bet + \btau}}^2_{0,\cF^*_h}.
\end{equation}

We end this section by recalling  technical results that will be needed in what follows. We begin with the well-known trace inequality, see for example \cite[Proposition 4.1]{conThan}.
\begin{lemma}
There exists a constant $C>0$ independent of $h$ such that 
\begin{equation}\label{multiplicativetrace}
		h_K^{1/2}\norm{v}_{0,\partial K} \leq C \big( \norm{v}_{0,K} + h_K \norm*{\nabla v}_{0,K} \big),  
	\end{equation}
	for all $v\in H^1(K)$ and all $K\in \cT_h$. 
\end{lemma}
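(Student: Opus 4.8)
The plan is to reduce \eqref{multiplicativetrace} to the classical trace theorem on a single fixed reference element by an affine scaling argument, with the uniformity of the constant coming entirely from the shape regularity of the family $\{\cT_h\}_h$. First I would fix a reference simplex $\hat K$ and, for each $K\in\cT_h$, write the invertible affine map $F_K(\hat x)=B_K\hat x+b_K$ that carries $\hat K$ onto $K$. Shape regularity guarantees that the operator norms of $B_K$ and $B_K^{-1}$ are comparable to $h_K$ and $h_K^{-1}$ respectively, with constants depending only on the regularity parameter of the mesh and not on $K$ or $h$; in particular $\abs{\det B_K}\simeq h_K^d$. These spectral bounds are precisely what will make the final constant $C$ independent of $h$.

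Next I would record how the three quantities appearing in the estimate transform under the pull-back $v=\hat v\circ F_K$. A change of variables gives $\norm{v}_{0,K}\simeq h_K^{d/2}\norm{\hat v}_{0,\hat K}$ for the bulk term, while the face measures scale like $h_K^{d-1}$, so $\norm{v}_{0,\partial K}\simeq h_K^{(d-1)/2}\norm{\hat v}_{0,\partial\hat K}$. For the gradient, the chain rule yields $\nabla v=B_K^{-\mt}\,\hat\nabla\hat v$, whence $\norm{\nabla v}_{0,K}\le \norm{B_K^{-1}}\,\abs{\det B_K}^{1/2}\norm{\hat\nabla\hat v}_{0,\hat K}\simeq h_K^{(d-2)/2}\norm{\hat\nabla\hat v}_{0,\hat K}$, or equivalently $h_K\norm{\nabla v}_{0,K}\simeq h_K^{d/2}\norm{\hat\nabla\hat v}_{0,\hat K}$. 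The key observation is that, after multiplying the boundary scaling by the extra factor $h_K^{1/2}$, all three relevant quantities $h_K^{1/2}\norm{v}_{0,\partial K}$, $\norm{v}_{0,K}$ and $h_K\norm{\nabla v}_{0,K}$ carry the \emph{same} power $h_K^{d/2}$ of the mesh size.

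Finally I would invoke the continuity of the trace operator $H^1(\hat K)\to L^2(\partial\hat K)$ on the fixed reference element, which furnishes a constant $\hat C$ depending only on $\hat K$ with $\norm{\hat v}_{0,\partial\hat K}\le \hat C\big(\norm{\hat v}_{0,\hat K}+\norm{\hat\nabla\hat v}_{0,\hat K}\big)$. Multiplying this inequality by $h_K^{d/2}$ and substituting the scaling relations of the previous step converts it directly into $h_K^{1/2}\norm{v}_{0,\partial K}\le C\big(\norm{v}_{0,K}+h_K\norm{\nabla v}_{0,K}\big)$, which is the claim.

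I expect the only delicate point to be the bookkeeping of the powers of $h_K$: the homogenising factor $h_K^{1/2}$ on the left-hand side is exactly what reconciles the face-measure exponent $h_K^{(d-1)/2}$ with the common bulk exponent $h_K^{d/2}$ shared by the two right-hand terms, so that the inequality is dimensionally consistent before and after scaling. Beyond this homogeneity check the argument is routine, and the independence of $C$ from $h$ is inherited verbatim from the uniform bounds on $\norm{B_K}$, $\norm{B_K^{-1}}$ and $\abs{\det B_K}$ granted by shape regularity.
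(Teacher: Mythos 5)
Your proof is correct. Note first that the paper itself does not prove this lemma at all: it is quoted as a well-known result with a pointer to \cite[Proposition 4.1]{conThan}, so any self-contained argument is necessarily an addition rather than a variant of the paper's treatment. Your reference-element scaling argument is the standard route and is sound: under shape regularity one has $\norm{B_K}\lesssim h_K$, $\norm{B_K^{-1}}\lesssim h_K^{-1}$ and $\abs{\det B_K}\simeq h_K^d$ with constants depending only on the shape-regularity parameter; the surface Jacobian $\abs{\det B_K}\,\abs{B_K^{-\mt}\hat{\bn}}$ is then $\simeq h_K^{d-1}$, so your bookkeeping showing that $h_K^{1/2}\norm{v}_{0,\partial K}$, $\norm{v}_{0,K}$ and $h_K\norm{\nabla v}_{0,K}$ all carry the common factor $h_K^{d/2}$ is exactly right, and continuity of the trace map $H^1(\hat K)\to L^2(\partial\hat K)$ on the fixed reference simplex finishes the argument. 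For comparison, the proof usually given for this inequality (and the reason it is often called \emph{multiplicative}) avoids the reference element altogether: apply the divergence theorem on $K$ to the field $\abs{v}^2(x-x_0)$, with $x_0$ the incenter of $K$, using $(x-x_0)\cdot\bn\geq \rho_K$ on $\partial K$ and $\abs{x-x_0}\leq h_K$ in $K$, which gives $\rho_K\norm{v}^2_{0,\partial K}\leq d\,\norm{v}^2_{0,K}+2h_K\norm{v}_{0,K}\norm{\nabla v}_{0,K}$; the stated additive form then follows from shape regularity and Young's inequality. That direct argument buys the sharper product-form bound $h_K\norm{v}^2_{0,\partial K}\lesssim \norm{v}_{0,K}\big(\norm{v}_{0,K}+h_K\norm{\nabla v}_{0,K}\big)$, occasionally needed elsewhere; your scaling argument buys simplicity and yields directly the additive form, which is all this paper ever uses (namely in deriving \eqref{discTrace} and \eqref{scott0b}).
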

It is easy to deduce from \eqref{multiplicativetrace} the following discrete trace inequality (see also \cite[Proposition 4.1]{conThan})
\begin{equation}\label{discTrace}
  \norm*{h^{1/2}_{\cF}\mean{v}}_{0,\cF^*_h}\leq C_{\textup{tr}} \norm*{v}_{0,\Omega}\quad \forall  v\in \cP_k(\cT_h). 
 \end{equation}
The Scott--Zhang like quasi-interpolation operator $\Pi_h:\, L^2(\Omega) \to \cP_{k}(\cT_h)\cap H^1(\Omega)$, obtained in \cite{ern} by applying an $L^2$-orthogonal projection onto $\cP_{k}(\cT_h)$ followed by an averaging procedure with continuous and piecewise $\cP_{k}$ range,   will be especially useful in the forthcoming analysis. We recall in the next lemma the local approximation properties provided in \cite[Theorem 5.2]{ern}. Let us first introduce some notations. For any $K\in \cT_h$, we introduce the subset of $\cT_h$ defined by  $\cT_h^K := \set*{K'\in\cT_h:\, K\cap K' \neq \emptyset}$ and let $D_K = \text{interior}\left(\cup_{K'\in \cT_h^K} K' \right)$.   

\begin{lemma}
	The quasi-interpolation operator $\Pi_h$ is invariant in the space $\cP_{k}(\cT_h)\cap H^1(\Omega)$ and there exists a constant $C>0$ independent of $h$ such that 
	\begin{equation}\label{scott0}
		|v - \Pi_h v |_{m,K} \leq C h_K^{r - m} |v|_{r,D_K},
	\end{equation}
for all real numbers $0\leq r \leq k+1$, all natural numbers $0\leq m \leq [r]$, all $v\in H^r(D_K)$, and all $K\in \cT_h$. Here $[r]$ stands for the the largest integer less than or equal to $r$. 
\end{lemma}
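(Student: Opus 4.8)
The plan is to exploit the factorisation of the operator as $\Pi_h = \mathcal{E}_h\circ P_h$, where $P_h:L^2(\Omega)\to\cP_k(\cT_h)$ is the element-wise $L^2$-orthogonal projection and $\mathcal{E}_h:\cP_k(\cT_h)\to\cP_k(\cT_h)\cap H^1(\Omega)$ is the averaging (Oswald-type) operator obtained by redefining each degree of freedom as the mean of the values inherited from the elements sharing it. Three structural properties of these building blocks drive the argument: (i) \emph{polynomial invariance} --- $P_h$ fixes $\cP_k(\cT_h)$ and $\mathcal{E}_h$ fixes $\cP_k(\cT_h)\cap H^1(\Omega)$, so their composition fixes $\cP_k(\cT_h)\cap H^1(\Omega)$, which is precisely the claimed invariance; (ii) \emph{local $L^2$-stability}, $\norm{\Pi_h v}_{0,K}\lesssim\norm{v}_{0,D_K}$, stemming from the $L^2$-stability of $P_h$ and the fact that $\mathcal{E}_h$ only mixes values coming from the neighbouring elements in $\cT_h^K$; and (iii) an \emph{inverse estimate} on the finite-dimensional image, $\abs{\Pi_h v}_{m,K}\lesssim h_K^{-m}\norm{\Pi_h v}_{0,K}$ for $0\le m\le[r]$.

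First I would establish (ii) and (iii) precisely. The delicate point of (ii) is the averaging step: each nodal value of $\mathcal{E}_h(P_h v)$ is a convex combination of nodal values of $P_h v$ on the elements of $\cT_h^K$, so a local inverse inequality on each such element together with shape-regularity (which forces $h_{K'}\simeq h_K$ for $K'\in\cT_h^K$ and bounds the cardinality of $\cT_h^K$) controls $\norm{\mathcal{E}_h(P_h v)}_{0,K}$ by $\norm{P_h v}_{0,D_K}\le\norm{v}_{0,D_K}$. Combining with (iii) yields $\abs{\Pi_h v}_{m,K}\lesssim h_K^{-m}\norm{v}_{0,D_K}$.

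Next I would run the standard Bramble--Hilbert/Dupont--Scott argument on the patch. Given $v\in H^r(D_K)$ with $0\le r\le k+1$, pick a polynomial $q\in\cP_{[r]}(D_K)\subset\cP_k(D_K)$; since $q$ is a genuine polynomial on $D_K$ it is globally continuous and locally in $\cP_k$, whence $\Pi_h q=q$ on $K$ by the invariance in (i). Writing $v-\Pi_h v=(v-q)-\Pi_h(v-q)$ and using the triangle inequality together with the bounds above gives
\[
\abs{v-\Pi_h v}_{m,K}\le\abs{v-q}_{m,K}+\abs{\Pi_h(v-q)}_{m,K}\lesssim\abs{v-q}_{m,K}+h_K^{-m}\norm{v-q}_{0,D_K}.
\]
Choosing $q$ to be the averaged Taylor polynomial of $v$ on $D_K$ and invoking the Bramble--Hilbert lemma then bounds both terms by $h_K^{r-m}\abs{v}_{r,D_K}$, which is the assertion.

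The main obstacle I anticipate is twofold. The first is keeping the averaging operator $\mathcal{E}_h$ under control in the stronger $H^m$-seminorms rather than merely in $L^2$: this is exactly where the inverse inequality (iii) on the discrete image is needed, and it forces the restriction $m\le[r]$ together with the use of the enlarged patch $D_K$, since the averaging is genuinely non-local and mixes all elements touching $K$. The second, more technical, is the passage to non-integer $r$: the Bramble--Hilbert estimate must hold for fractional Sobolev smoothness, which I would obtain by a scaling argument to a reference configuration followed by real interpolation between the integer endpoints $r=\lfloor r\rfloor$ and $r=\lceil r\rceil$, using that shape-regularity makes the patches $D_K$ affinely equivalent to a finite family of reference patches with uniformly bounded overlap.
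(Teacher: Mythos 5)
First note that the paper itself contains no proof of this lemma: it is recalled verbatim from \cite[Theorem 5.2]{ern}, whose operator is built exactly as in your factorisation $\Pi_h=\mathcal{E}_h\circ P_h$. Your proposal is therefore a reconstruction of the cited argument, and its architecture --- invariance of the composition on $\cP_k(\cT_h)\cap H^1(\Omega)$, local $L^2$-stability on patches via shape regularity, inverse inequalities on the discrete image, and a Bramble--Hilbert step on $D_K$ --- is the correct and standard one; for integer $r$ it essentially goes through as written. There is, however, one concrete error: choosing $q\in\cP_{[r]}(D_K)$ fails at the endpoint $r=k+1$, where $[r]=k+1$ and $q\in\cP_{k+1}(D_K)\not\subset\cP_k(\cT_h)\cap H^1(\Omega)$, so the invariance $\Pi_h q=q$ --- and with it the identity $v-\Pi_h v=(v-q)-\Pi_h(v-q)$ --- is simply false. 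Since $r=k+1$ is precisely the case producing the optimal rate $h_K^{k+1-m}$ that the paper uses downstream (e.g.\ in Lemma~\ref{maintool}), this is not a negligible corner case; the repair is easy, though: take $q$ of degree $\lceil r\rceil-1\le k$ (equivalently $\min\{[r],k\}$), which still yields the Bramble--Hilbert rate $h_K^{r-m}$.

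The more serious issue is the fractional case. Your uniformity mechanism --- that shape regularity makes the patches $D_K$ ``affinely equivalent to a finite family of reference patches'' --- is false: shape regularity bounds the cardinality of $\cT_h^K$ and the shape of each element, but the patch geometries form a continuum of mutually non-affine configurations (the number of elements, their angles and their arrangement all vary with $K$ and $h$). Hence one cannot transport real-interpolation norm equivalences from finitely many reference configurations, and the constant in the identification $H^{r}(D_K)=\bigl[H^{[r]}(D_K),H^{[r]+1}(D_K)\bigr]_{\theta,2}$ must be shown to be uniform over all patches and all $h$ by other means. The standard repairs are either (a) to prove the fractional Deny--Lions/Bramble--Hilbert estimate directly on domains that are finite unions of sets star-shaped with respect to balls of comparable radius, with constants depending only on the shape-regularity parameters (this is in the spirit of how \cite{ern} handles it), or (b) to retain the interpolation argument but construct extension operators on the rescaled patches with uniformly bounded norms, exploiting their uniform Lipschitz character. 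As written, this step is a genuine gap in your argument, even though the statement and your overall strategy are sound.
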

We point out that, as a consequence of \eqref{scott0} and the triangle inequality, it holds
\begin{equation}\label{scott00}
		|\Pi_h v|_{m,K} \lesssim |v|_{m,D_K},
	\end{equation}
	for all natural numbers $0\leq m \leq k+1$,  all $v\in H^m(D_K)$, and all $K\in \cT_h$. 	 Moreover, it is straightforward to deduce from \eqref{scott0} and the multiplicative trace inequality \eqref{multiplicativetrace} that 
	\begin{equation}\label{scott0b}
		h_K^{1/2}\norm{v - \Pi_h v}_{0,\partial K} + h_K^{3/2}\norm{\nabla ( v - \Pi_h v )}_{0,\partial K} \lesssim h_K^r |v|_{r,D_K},
	\end{equation}
	for all $2 \leq r \leq k+1$ ($k\geq 1$), all $v\in H^r(D_K)$ and all $K\in \cT_h$. 

We infer from \eqref{scott00} a global stability property for $\Pi_h$ on $H^m(\cup_j\Omega_j)$,  $0\leq m \leq k+1$, by taking advantage of the fact that the cardinal $\#(\cT_h^K)$ of $\cT_h^K$ is uniformly bounded for all $K\in \cT_h$ and all $h$, as a consequence of the shape-regularity of the mesh sequence $\{\cT_h\}$. Indeed, given $v\in H^m(\cup_j\Omega_j)$, we let $ \cT_h^K(\Omega_j) := \set*{K'\in\cT_h(\Omega_j):\, K\cap K' \neq \emptyset}$ be the subset of elements in $\cT_h^K$ that are contained in $\bar\Omega_j$ and denote $D_K^j := \text{interior}\left(\cup_{K'\in \cT_h^K(\Omega_j)} K' \right)$. It follows from \eqref{scott00} that 
\[
		\norm{\Pi_h v}_{m,K} \lesssim \norm{v}_{m,D_K^j},  \quad \forall v\in H^m(D^j_K), \,\, 0\leq m \leq k+1,\quad  \forall K\in \cT_h^K(\Omega_j).
\]
Summing over $K\in \cT_h^K(\Omega_j)$  and using that $\#(\cT_h^{K}(\Omega_j)) \leq \#(\cT_h^{K}) \leq c$ for all $ j \leq J$ and all $h$, we deduce that 
\begin{equation}\label{scott0a}
		\norm{\Pi_h v}_{m,\Omega_j} \lesssim  \norm{v}_{m,\Omega_j},\quad  \forall j=1,\ldots, J.
\end{equation}
Finally, it follows from a successive application of the discrete trace inequality \eqref{discTrace} and the stability estimate \eqref{scott0a} for $m=1$ that 
\begin{equation}\label{scott0c}
	\norm*{h^{1/2}_{\cF}\mean{\nabla \Pi_h v}}_{0,\cF^*_h}\leq C_{\textup{tr}} \sum_{j=1}^J |\Pi_h v|_{1,\Omega_j} \lesssim  \sum_{j=1}^J  \norm*{v}_{1,\Omega_j}  \quad \forall  v\in H^1(\cup_j\Omega_j).
\end{equation}
 
 In the sequel, we use the same notation for the tensorial version $\Pi_h:\, L^{2}(\Omega,\bbS)\to \cP_{k}(\cT_h,\bbS)\cap H^1(\Omega,\bbS)$ of the quasi-interpolation operator, which is obtained by applying the scalar operator componentwise. It is important to notice that such an operator preserves tensor symmetry. As a consequence of \eqref{scott0} and \eqref{scott0b} we have the following result.

\begin{lemma}\label{maintool}
There exists a constant $C>0$ independent of $h$ such that 
\begin{multline}\label{tool}
	\norm*{(\bet - \Pi_h \bet, \btau - \Pi_h \btau)}_{0,\Omega} + \norm*{\bdiv\big( (\bet + \btau) - \Pi_h(\bet + \btau) \big)}_{0,\Omega} 
	\\
	+ \norm*{h_F^{\frac{1}{2}} \mean{\bdiv\big((\bet + \btau) - \Pi_h(\bet + \btau)\big)}}_{0,\cF_h^*} \leq 
	C h^{\min\{r,k\}} \sum_{j=1}^J\Big( \norm*{(\bet, \btau)}_{r,\Omega_j} + \norm*{\bet + \btau}_{r+1,\Omega_j} \Big),
\end{multline}
	for all $(\bet, \btau )\in \mathcal{H}^+_{\text{sym}}$ such that $(\bet, \btau )\in  H^r(\cup_j\Omega_j,\bbM\times \bbM)$ and $\bet +\btau \in   H^{r+1}(\cup_j\Omega_j,\mathbb R^d)$, $r\geq 1$.
\end{lemma}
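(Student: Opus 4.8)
The plan is to estimate separately the three contributions on the left-hand side of \eqref{tool}, exploiting the linearity of $\Pi_h$ (so that $\Pi_h(\bet+\btau)=\Pi_h\bet+\Pi_h\btau$) together with the two families of local bounds \eqref{scott0} and \eqref{scott0b}. The structural observation driving the argument is that the first term involves only the functions themselves, whereas the other two involve the divergence of $\bet+\btau$, i.e. one spatial derivative; since differentiation lowers the approximation order by one, the sharp rate $h^{\min\{r,k\}}$ for those terms can only be recovered by spending the extra regularity $H^{r+1}$ assumed on the total field $\bet+\btau$. This explains why it is the sum, and not the individual components, that is asked to be smoother, consistently with the fact that the DG norm \eqref{norm:sym} and the consistency terms of the scheme control the divergence and the interface behaviour only through $\bet+\btau$.

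For the first term I would apply \eqref{scott0} componentwise with $m=0$ and regularity exponent $\rho:=\min\{r,k+1\}$ to each entry of $\bet$ and $\btau$, giving $\norm*{\bet-\Pi_h\bet}_{0,K}\lesssim h_K^{\rho}\,\abs{\bet}_{\rho,D_K}$ on every $K\in\cT_h$, and likewise for $\btau$; since $\rho\ge\min\{r,k\}$ and $h\le 1$, squaring and summing over $K$ produces the local rate $h^{\min\{r,k\}}$. For the second term I would instead use \eqref{scott0} with $m=1$ and $\rho:=\min\{r+1,k+1\}$ applied to $w:=\bet+\btau$ (admissible because $r\ge1$ and $k\ge1$ force $[\rho]\ge1$ and $\rho\le k+1$), together with the pointwise bound $\norm*{\bdiv(w-\Pi_h w)}_{0,K}\lesssim\abs{w-\Pi_h w}_{1,K}$; this yields the local rate $h_K^{\rho-1}=h_K^{\min\{r,k\}}$ against $\abs{w}_{\rho,D_K}$, which is exactly what is needed.

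For the weighted face term I would first reduce the $\mean{\cdot}$ over $\cF_h^*$ to element-boundary quantities: on an interior face $F=\bar K\cap\bar K'$ the average is controlled by $\norm*{\bdiv(w-\Pi_h w)}_{0,\partial K}+\norm*{\bdiv(w-\Pi_h w)}_{0,\partial K'}$, and on a Neumann face by a single such term, using shape regularity to identify $h_F$ with the neighbouring $h_K$. Bounding the divergence on $\partial K$ by the full gradient and invoking the refined boundary estimate \eqref{scott0b} with $\rho=\min\{r+1,k+1\}\in[2,k+1]$ gives $\norm*{\bdiv(w-\Pi_h w)}_{0,\partial K}\lesssim h_K^{\rho-3/2}\abs{w}_{\rho,D_K}$; multiplying by the weight $h_F^{1/2}\simeq h_K^{1/2}$ and squaring produces $h_K^{2(\rho-1)}=h_K^{2\min\{r,k\}}$, once more the desired power. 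The crucial technical point here is the exact matching between the $h_F^{1/2}$ weight carried by this term and the $h_K^{3/2}$ boundary scaling built into \eqref{scott0b}, which is the only place where the sharper trace bound, rather than the interior estimate \eqref{scott0}, is used.

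Finally, I would assemble the three bounds. Because the mesh is aligned with the partition $\{\Omega_j\}$, every $K$ lies in a single $\Omega_j$, and shape regularity guarantees that the overlap counts $\#(\cT_h^K)$ are uniformly finite; reorganising the elementwise sums subdomain by subdomain through the sets $D_K^j$, exactly as in the derivation of \eqref{scott0a}, and bounding each patch seminorm by the full norm on $\Omega_j$, collapses the right-hand sides into $\sum_{j=1}^J\big(\norm*{(\bet,\btau)}_{r,\Omega_j}+\norm*{\bet+\btau}_{r+1,\Omega_j}\big)$. I expect the main obstacle to be precisely this bookkeeping across subdomain interfaces: since $(\bet,\btau)$ and $\bet+\btau$ are only piecewise smooth and a patch $D_K$ may straddle two subdomains, one must split the patch contributions along the partition so that only the genuinely available $H^r$ and $H^{r+1}$ regularity is invoked, while keeping the number of contributions uniformly bounded and handling the averages near the skeleton $\cF_h^*$ consistently.
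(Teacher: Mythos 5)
Your proposal is correct and follows essentially the same route as the paper's proof: \eqref{scott0} with $m=0$ for the $L^2$ term, \eqref{scott0} with $m=1$ applied to $\bet+\btau$ (bounding $\bdiv$ by the full gradient) for the divergence term, the boundary estimate \eqref{scott0b} with exponent $\min\{r+1,k+1\}$ matched against the $h_F^{1/2}$ weight for the face-average term, and a final element-by-element summation organised subdomain-wise through the restricted patches $D_K^j$ using shape regularity and the alignment of $\cT_h$ with $\{\Omega_j\}$. The bookkeeping issue you flag at subdomain interfaces is exactly what the paper handles by replacing $D_K$ with $D_K^j$ in the local estimates, so no discrepancy remains.
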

\begin{proof}
Given $K\in \cT_h(\Omega_j)$, $1\leq j \leq J$, we obtain from  \eqref{scott0} that  
\begin{equation}\label{esti1}
\norm*{(\bet - \Pi_h \bet, \btau - \Pi_h \btau)}^2_{0,K}\lesssim h_K^{2\min\{r,k+1\}} ( \norm*{\bet}^2_{r,D^j_K} + \norm*{\btau}^2_{r,D^j_K}),
\end{equation}
 and 
 \begin{align}\label{esti2}
 \norm*{\bdiv\big( (\bet + \btau) - \Pi_h(\bet + \btau) \big)}^2_{0,K} &\lesssim 
\norm*{\bnabla\big( (\bet + \btau) - \Pi_h(\bet + \btau) \big)}^2_{0,K}
\lesssim h_K^{2\min\{r,k\}}  \norm*{\bet + \btau}^2_{r+1,D^j_K}.
\end{align} 
For the last term in the left-hand side of \eqref{tool}, we notice that 
\[
  \norm*{h_F^{\frac{1}{2}} \mean{\bdiv\big((\bet + \btau) - \Pi_h(\bet + \btau)\big)}}^2_{0,\cF_h^*}   
  \lesssim    
  \sum_{j=1}^J \sum_{K\in \cT_h(\Omega_j)}  h_K\norm*{ \bnabla \Big( (\bgam + \bze) -    \Pi_h(\bgam + \bze)\Big) }^2_{0,\partial K},
\]
and \eqref{scott0b} yields  
 \begin{equation}\label{esti3}
 h_K\norm*{ \bnabla \Big( (\bgam + \bze) -    \Pi_h(\bgam + \bze)\Big) }^2_{0,\partial K}    
 \lesssim h_K^{2\min\{r,k\}} \norm*{\bgam + \bze}^2_{r+1,D_K^j}, \quad \forall K\in \cT_h(\Omega_j).
\end{equation}
Summing \eqref{esti1}, \eqref{esti2} and \eqref{esti3} over $K\in \cT_h(\Omega_j)$  and then over $j=1,\ldots, J$ and invoking the shape-regularity of the mesh sequence give the result. 
\end{proof}

\section{The semi-discrete DG problem and its convergence analysis}\label{sec:semi-discrete}

Our DG scheme requires the external force $\bF$ to have traces at the interelement boundaries of the mesh $\cT_h$. In order to make hypotheses on the data that are realistic  from the practical point of view, we will only assume that $\bF$ is piecewise smooth relatively to the partition $\{\Omega_j,\, j = 1, \ldots , J\}$ of $\bar\Omega$. 

\begin{assumption}\label{assumption1}
	The body force satisfies  $\bF\in W^{1,\infty}(H^1(\cup_j \Omega_j, \R^d))$.	  
\end{assumption}

We are now in a position to introduce the following discontinuous Galerkin semi-discretisation of  \eqref{varFormR1-varFormR2}: Find $(\bgam_h, \bze_h)\in \cC^2(\mathcal{H}^{DG}_{\text{sym},h})$  solution of 
\begin{align}\label{varFormR1-varFormR2-h}
\nonumber
 &A\Big( (\ddot\bgam_h ,  \ddot\bze_h), (\bet ,   \btau) \Big)  + (\tfrac{1}{\omega}\cV\dot\bze_h,\btau) 
+ \Big(\tfrac{1}{\rho}\bdiv_h(\bgam_h +  \bze_h) , \bdiv_h(\bet +  \btau) \Big)  
\\
&\quad - \Big(\mean{\tfrac{1}{\rho} \bdiv_h (\bgam_h + \bze_h)}, \jump{\bet_h +  \btau_h}\Big)_{\cF^*_h}
- \Big(\mean{\tfrac{1}{\rho} \bdiv_h (\bet_h +  \btau_h)}, \jump{\bgam_h + \bze_h}\Big)_{\cF^*_h}
\nonumber\\
&\qquad \qquad + \Big(\texttt{a} h_\cF^{-1}\jump{\bgam_h + \bze_h}, \jump{\bet_h +  \btau_h} \Big)_{\cF^*_h}
 = -\Big(\tfrac{1}{\rho}\bF,  \bdiv_h (\bet_h +  \btau_h) \Big)
 + \Big(\mean{\tfrac{1}{\rho} \bF}, \jump{\bet_h +  \btau_h}\Big)_{\cF^*_h}, \quad
 \end{align}
for all $(\bet_h, \btau_h)\in \mathcal{H}^{DG}_{\text{sym},h}$, where $\texttt{a}>0$ is a given, sufficiently large parameter. 
 
We assume that  the solution $(\bgam_h(t),\bze_h(t))$ of problem \eqref{varFormR1-varFormR2-h} is started up with the initial conditions 
\begin{equation}\label{initial-R1-R2-h*c}
(\bgam_h(0),\bze_h(0))= (\Pi_h\bgam_0, \Pi_h\bze_0),
\quad  (\dot\bgam_h(0),\dot\bze_h(0))= (\Pi_h\bgam_1, \Pi_h\bze_1).
\end{equation}
In this way, the projected error $(\be^h_{\bgam}(t),  \be^h_{\bze}(t)) := (\Pi_h\bgam - \bgam_h, \Pi_h\bze - \bze_h)(t)$  satisfies, by construction,  the vanishing initial conditions: 
\begin{equation*}
(\be^h_{\bgam},  \be^h_{\bze})(0)=(\mathbf 0,\mathbf 0)\quad \text{and} \quad  (\dot\be^h_{\bgam},  \dot\be^h_{\bze})(0)= (\mathbf 0,\mathbf 0).
\end{equation*}
 
The convergence analysis of the semi-discrete problem \eqref{varFormR1-varFormR2-h} requires the following time regularity assumptions on the solution $(\bgam, \bze)$ of \eqref{varFormR1-varFormR2} that are not guaranteed by Theorem~\ref{theorem-R1-R2}.

\begin{assumption}\label{assumption2}
	The solution $(\bgam, \bze)$ of \eqref{varFormR1-varFormR2} satisfies 
	\begin{enumerate}[label=\roman*)]
	\item $(\bgam,\bze)\in W^{3,\infty}(L^2(\Omega),\bbS\times \bbS)$, 
	\item and $\bdiv(\bgam+\bze)\in W^{1,\infty}(L^2(\Omega),\R^d)$.
	\end{enumerate}
\end{assumption}

We begin by verifying that the DG scheme \eqref{varFormR1-varFormR2-h} is consistent with problem \eqref{varFormR1-varFormR2}.

\begin{prop}\label{consistency}
Under Assumptions~\ref{assumption1} and \ref{assumption2}(i), the solution $(\bgam, \bze)$ of \eqref{varFormR1-varFormR2} satisfies the identity  
 \begin{align}\label{consistent}
\nonumber
 &A\Big( (\ddot\bgam ,  \ddot\bze), (\bet_h ,   \btau_h) \Big)  + (\tfrac{1}{\omega}\cV\dot\bze,\btau_h)  
+ \Big(\tfrac{1}{\rho}\bdiv(\bgam +  \bze) , \bdiv_h(\bet_h +  \btau_h) \Big)  
\\
&\quad - \Big(\mean{\tfrac{1}{\rho} \bdiv (\bgam + \bze)}, \jump{\bet_h +  \btau_h}\Big)_{\cF^*_h}
- \Big(\mean{\tfrac{1}{\rho} \bdiv_h (\bet_h + \btau_h)}, \jump{\bgam + \bze}\Big)_{\cF^*_h}
 \nonumber\\
&\qquad + \Big(\texttt{a} h_\cF^{-1}\jump{\bgam + \bze}, \jump{\bet_h + \btau_h} \Big)_{\cF^*_h}
 = -\Big(\tfrac{1}{\rho}\bF,  \bdiv_h (\bet_h +  \btau_h) \Big) 
 + \Big(\mean{\tfrac{1}{\rho} \bF}, \jump{\bet_h + \btau_h}\Big)_{\cF^*_h},
 \end{align}
for all $(\bet_h, \btau_h)\in \mathcal{H}^{DG}_{\text{sym},h}$. 
\end{prop}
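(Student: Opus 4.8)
The goal is to show that the exact solution $(\bgam,\bze)$, which is smooth enough under Assumptions~\ref{assumption1} and \ref{assumption2}(i), satisfies the same discrete variational identity \eqref{consistent} as the numerical solution, when tested against arbitrary discrete functions $(\bet_h,\btau_h)\in\mathcal{H}^{DG}_{\text{sym},h}$. The plan is to start from the continuous variational formulation \eqref{varFormR1-varFormR2} and show that, when the exact solution is regular, the extra DG terms (the consistency/jump terms on $\cF^*_h$) either vanish identically or can be produced by local integration by parts. The central structural fact I will exploit is that the exact Cauchy stress $\bgam+\bze$ belongs to $H_N(\bdiv,\Omega,\bbS)$, hence its normal traces are single-valued across interior facets and vanish on $\Gamma_N$; this forces $\jump{\bgam+\bze}=\mathbf 0$ on every $F\in\cF^*_h$.

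First I would record the consequences of this jump-vanishing property. Since $\jump{\bgam+\bze}=\mathbf 0$ on all of $\cF^*_h$, the second consistency term $\bigl(\mean{\tfrac{1}{\rho}\bdiv_h(\bet_h+\btau_h)},\jump{\bgam+\bze}\bigr)_{\cF^*_h}$ and the penalty term $\bigl(\texttt{a}h_\cF^{-1}\jump{\bgam+\bze},\jump{\bet_h+\btau_h}\bigr)_{\cF^*_h}$ drop out immediately. So the whole question reduces to reconciling the two surviving volume/facet terms involving $\bdiv(\bgam+\bze)$ and $\bF$ with the distributional identity satisfied by the exact solution. Because $\bgam+\bze$ is globally $H(\bdiv)$, one has $\bdiv_h(\bgam+\bze)=\bdiv(\bgam+\bze)$ in $L^2(\Omega,\R^d)$, and under Assumption~\ref{assumption2}(i) together with Assumption~\ref{assumption1} both $\bdiv(\bgam+\bze)$ and $\bF$ are piecewise $H^1$, so their facet averages are well defined.

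Next I would verify the algebraic identity that the scheme is consistent with. Starting from \eqref{varFormR1-varFormR2}, the continuous equation holds for all $(\bet,\btau)\in\mathcal{H}^+_{\text{sym}}$, but the discrete test functions $(\bet_h,\btau_h)$ are not in $\mathcal{H}^+_{\text{sym}}$ (their total is not $H(\bdiv)$-conforming), so I cannot substitute them directly. Instead I would go back to the strong momentum relation $\rho\ddot\bu=\bdiv(\bgam+\bze)+\bF$ and the split constitutive equations, and re-derive \eqref{const+} and \eqref{var0} element-by-element against $(\bet_h,\btau_h)$. Integrating $\bigl(\beps(\ddot\bu),\bet_h+\btau_h\bigr)_K$ by parts on each $K$ produces a volume term $-\bigl(\ddot\bu,\bdiv(\bet_h+\btau_h)\bigr)_K$ plus a boundary term $\int_{\partial K}(\bet_h+\btau_h)\bn_K\cdot\ddot\bu$. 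Summing over elements, the interelement contributions of $\ddot\bu$ assemble into the jump $\jump{\bet_h+\btau_h}$ tested against the single-valued trace of $\ddot\bu=\rho^{-1}(\bdiv(\bgam+\bze)+\bF)$, which is exactly $\mean{\tfrac{1}{\rho}\bdiv(\bgam+\bze)}$ plus $\mean{\tfrac{1}{\rho}\bF}$ since these quantities are single-valued; this reproduces the first consistency facet term and the two right-hand-side terms. The boundary contributions on $\Gamma_N$ vanish because the test total $\bet_h+\btau_h$ need not vanish there, but $\ddot\bu$ is paired against $\jump{\bet_h+\btau_h}$ using the $\Gamma_N$ convention for jumps.

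The main obstacle I expect is the careful bookkeeping of the facet sums: I must check that the single-valuedness of $\ddot\bu$ (equivalently of $\tfrac{1}{\rho}\bdiv(\bgam+\bze)$ and $\tfrac{1}{\rho}\bF$, where $\rho$ is piecewise constant and the mesh is aligned with its partition) lets me replace the per-element trace of $\ddot\bu$ by its average $\mean{\cdot}$, and that the orientation conventions for $\bn_K$ in the definition of $\jump{\cdot}$ produce the correct signs. The piecewise-constant structure of $\rho$ relative to $\{\Omega_j\}$ is essential here: across an interior facet interior to some $\Omega_j$ the coefficient $\tfrac{1}{\rho}$ is continuous, so $\mean{\tfrac{1}{\rho}\bdiv(\bgam+\bze)}$ and $\mean{\tfrac{1}{\rho}\bF}$ genuinely coincide with the single-valued trace of $\tfrac{1}{\rho}$ times the conforming quantity. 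Once the facet sums are matched term by term and the two vanishing jump terms are discarded, \eqref{consistent} follows directly from \eqref{varFormR1-varFormR2}, completing the proof.
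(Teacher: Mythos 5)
Your plan has the same skeleton as the paper's proof, only run in reverse: you assemble the DG identity from the strong equations by element-wise integration by parts, whereas the paper reduces the left-hand side of \eqref{consistent} to the relation \eqref{const+}; the essential ingredients ($\jump{\bgam+\bze}=\mathbf 0$, the per-element Green formula, single-valuedness of the trace of $\ddot\bu$) are the same. But the step that carries the real weight --- that $\ddot\bu$ has a well-defined, single-valued $L^2$ trace on every facet --- is never established, and the mechanism you propose for checking it is incorrect. You assert that single-valuedness of $\ddot\bu$ is ``equivalent'' to single-valuedness of $\tfrac{1}{\rho}\bdiv(\bgam+\bze)$ and $\tfrac{1}{\rho}\bF$, and your final paragraph only treats facets interior to a single $\Omega_j$. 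On a facet lying on an interface between two subdomains, $\bF$ (only piecewise $H^1$ under Assumption~\ref{assumption1}) and therefore $\bdiv(\bgam+\bze)=\rho\ddot\bu-\bF$ have two one-sided traces that differ, and $\rho$ jumps as well; neither summand is single-valued there. The quantity that \emph{is} single-valued is the combination $\ddot\bu$, and that is not free: the paper gets it from $\beps(\ddot\bu)=\cA\ddot\bgam\in L^\infty(L^2(\Omega,\bbS))$ (Assumption~\ref{assumption2}(i)), $\ddot\bu\in L^\infty(L^2(\Omega,\R^d))$ (momentum balance), the clamping condition on $\Gamma_D$, and Korn's inequality, which together give $\ddot\bu\in L^\infty(H^1(\Omega,\R^d))$. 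The same Korn step is what makes $\bdiv(\bgam+\bze)=\rho\ddot\bu-\bF$ piecewise $H^1$ in the first place; Assumption~\ref{assumption2}(i) contains no spatial regularity, so your claim that this piecewise regularity ``follows from Assumption~\ref{assumption2}(i) together with Assumption~\ref{assumption1}'' is circular. With $\ddot\bu\in H^1(\Omega,\R^d)$ in hand, the facet pairing should then be split by linearity of the average, $\ddot\bu=\mean{\ddot\bu}=\mean{\tfrac{1}{\rho}\bdiv(\bgam+\bze)}+\mean{\tfrac{1}{\rho}\bF}$, not by single-valuedness of the individual terms.

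The handling of boundary facets is also flawed. Element-wise integration by parts yields $\sum_{K\in\cT_h}\int_{\partial K}\ddot\bu\cdot(\bet_h+\btau_h)\bn_K$, i.e.\ a facet sum over all of $\cF_h$, while \eqref{consistent} involves only $\cF^*_h=\cF_h^0\cup\cF_h^N$. The missing facets are those in $\cF_h^D$, and their contribution vanishes only because $\ddot\bu=\mathbf 0$ on $\Gamma_D$ (differentiate the clamping condition twice in time) --- a fact the paper invokes explicitly and your proposal never uses. Your statement that the ``boundary contributions on $\Gamma_N$ vanish'' is off on both counts: on $\cF_h^N$ nothing vanishes; those terms are exactly the boundary part of $(\ddot\bu,\jump{\bet_h+\btau_h})_{\cF^*_h}$ through the one-sided conventions for $\jump{\cdot}$ and $\mean{\cdot}$. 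Both gaps are repairable with the ingredients above, and once repaired your argument coincides with the paper's.
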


\begin{proof}
Let us first notice that the acceleration field $\ddot\bu =  \tfrac{1}{\rho} \big(  \bdiv (\bgam + \bze) + \bF \big)\in L^\infty(L^2(\Omega,\R^d))$ satisfies, by virtue of Assumption~\ref{assumption2}(i), $\beps(\ddot \bu) = \cA \ddot\bgam\in L^\infty(L^2(\Omega,\bbS))$. It follows from the boundary condition on $\Gamma_D$ and Korn's inequality that $\ddot \bu \in L^\infty(H^1(\Omega,\R^d))$. Hence, using that $\jump{\bgam + \bze} = \mathbf 0$ yields 
\begin{align}\label{consistent0}
\nonumber
 &A\Big( (\ddot\bgam ,  \ddot\bze), (\bet_h ,   \btau_h) \Big)  + (\tfrac{1}{\omega}\cV\dot\bze,\btau_h)  
+ \Big(\tfrac{1}{\rho}\bdiv(\bgam +  \bze) , \bdiv_h(\bet_h +  \btau_h) \Big)  
\\
\nonumber&\quad - \Big(\mean{\tfrac{1}{\rho} \bdiv (\bgam + \bze)}, \jump{\bet_h +  \btau_h}\Big)_{\cF^*_h}
- \Big(\mean{\tfrac{1}{\rho} \bdiv_h (\bet_h + \btau_h)}, \jump{\bgam + \bze}\Big)_{\cF^*_h}
\\
\nonumber&\qquad + \Big(\texttt{a} h_\cF^{-1}\jump{\bgam + \bze}, \jump{\bet_h +  \btau_h} \Big)_{\cF^*_h} =   A\Big( (\ddot\bgam ,  \ddot\bze), (\bet_h ,   \btau_h) \Big)  + (\tfrac{1}{\omega}\cV\dot\bze,\btau_h) 
+ \Big( \ddot\bu,  \bdiv_h (\bet_h +  \btau_h) \Big)
\\
 &\qquad \quad  \qquad \quad  
 - \Big(\ddot\bu , \jump{\bet_h +  \btau_h}\Big)_{\cF^*_h} 
 -\Big( \tfrac{1}{\rho} \bF,  \bdiv_h (\bet_h +  \btau_h) \Big) + \Big(\mean{ \tfrac{1}{\rho} \bF}, \jump{\bet_h +  \btau_h}\Big)_{\cF^*_h},
\end{align}
 for all $(\bet_h, \btau_h)\in \mathcal{H}^{DG}_{\text{sym},h}$. Now, taking into account that ($\bu|_{\Gamma_D} = \mathbf 0$),
 \begin{align*}
 	\Big(\ddot\bu , \jump{\bet_h +  \btau_h}\Big)_{\cF^*_h} &= \Big(\ddot\bu , \jump{\bet_h +  \btau_h}\Big)_{\cF_h}  
 	= \sum_{K\in \cT_h}\int_{\partial K} \ddot{\bu}\cdot (\bet_h +  \btau_h)\bn_K 
 	\\
 	&= \sum_{K\in \cT_h}  \Big( \inner*{\beps(\ddot \bu), \bet_h +  \btau_h}_K + \inner*{\ddot{\bu},  \bdiv (\bet_h +  \btau_h)}_K \Big) ,
 \end{align*}
 and keeping in mind \eqref{const+}, we deduce that  
 \begin{align*}
 \Big( \ddot\bu,  \bdiv_h (\bet_h +  \btau_h) \Big) - \Big(\ddot\bu , \jump{\bet_h +  \btau_h}\Big)_{\cF^*_h} &= - \Big(\beps(\ddot \bu), \bet_h +  \btau_h\Big) 
 = -A\Big( (\ddot\bgam ,  \ddot\bze), (\bet_h ,   \btau_h) \Big)  - (\tfrac{1}{\omega}\cV\dot\bze, \btau_h).
 \end{align*}
 Substituting back the last identity in \eqref{consistent0} gives the sought consistency result. 
\end{proof}

Let us  consider the splitting $\big(\bgam -\bgam_h, \bze - \bze_h\big) = \big(\mathcal I_{\bgam}^h, \mathcal I_{\bze}^h\big) + \big(\be^h_{\bgam}, \be^h_{\bze}\big)$ defined by $\big(\be^h_{\bgam}, \be^h_{\bze}\big)(t) = \big(\Pi_h\bgam -\bgam_h, \Pi_h\bze - \bze_h\big)\in \mathcal{H}^{DG}_{\text{sym},h}$ and $\big(\mathcal I_{\bgam}^h, \mathcal I_{\bze}^h\big)(t) := (\bgam - \Pi_h \bgam, \bze - \Pi_h\bze)  \in \mathcal{H}^+_{\text{sym}}$.  
Next,  
we begin the convergence analysis by providing a stability estimate in terms of a DG  energy functional $\mathcal{E}_h$ given, for all $(\bet , \btau)\in \cC^1(\mathcal{H}^+_{\text{sym}}(h))$, by 
\begin{equation*}
\mathcal{E}_h\Big((\bet, \btau)\Big)(t):= \frac{1}{2} A\Big( (\dot\bet ,  \dot\btau), (\dot\bet ,  \dot\btau) \Big) + \frac{1}{2}  
\Big(\tfrac{1}{\rho} \bdiv_h (\bet +  \btau), \bdiv_h (\bet +  \btau) \Big) + \frac{1}{2}\Big(\texttt{a} h_\cF^{-1}\jump{\bet + \btau}, \jump{\bet + \btau} \Big)_{\cF^*_h}.
\end{equation*}
It is straightforward to deduce from \eqref{contA} and \eqref{ellipA}  that  $\mathcal{E}_h$ satisfies  
\begin{multline}\label{eq-extra-1DG} 
C^-\Big( \norm*{ (\dot\bet ,   \dot\btau) }_{0,\Omega}^2 + \norm*{ \bdiv_h (\bet +  \btau) }_{0,\Omega}^2 + \norm*{h_{\cF}^{-1/2} \jump{\bet + \btau}}^2_{0,\cF^*_h} \Big)\leq 	\mathcal{E}\Big((\bet, \btau)\Big)(t)
\\
  \leq  
  C^+ \Big( \norm*{ (\dot\bet ,   \dot\btau) }_{0,\Omega}^2 + \norm*{ \bdiv_h (\bet +  \btau) }_{0,\Omega}^2 + \norm*{h_{\cF}^{-1/2} \jump{\bet + \btau}}^2_{0,\cF^*_h}\Big),
\end{multline}
for all $(\bet , \btau)\in \cC^1(\mathcal{H}^+_{\text{sym}}(h))$, with $C^-:= \min\set*{\frac{\alpha}{2}, \frac{1}{2\rho^+}, \tfrac{\texttt a}{2}} $ and $C^+ := \max\set*{\frac{M}{2}, \frac{1}{2\rho^-}, \tfrac{\texttt a}{2}}$.

\begin{lemma} \label{stability}
Under Assumptions~\ref{assumption1} and \ref{assumption2}, there exists a positive parameter $\texttt{a}_0$ such that for all $\emph{\texttt{a}} \geq \texttt{a}_0$, the estimate 
\begin{align}\label{stab0}
\begin{split}
\displaystyle
\max_{[0, T]}\mathcal{E}_h\Big(\big( \be^h_{\bgam}, \be^h_{\bze} \big)\Big)      
\leq C \Big( \max_{[0, T]}\norm*{(\mathcal I_{\bgam}^h, \mathcal I_{\bze}^h)}^2_{W^{2,\infty}(L^2(\Omega, \mathbb M\times \mathbb M))}  + \max_{[0, T]}\norm*{\bdiv ({\mathcal I}^h_{\bgam} +  {\mathcal I}^h_{\bze})}^2_{W^{1,\infty}(L^2(\Omega, \R^d))} 
\\[0.25ex]
\qquad +  \max_{[0, T]}\norm*{ h_\cF^{1/2} \mean{\tfrac{1}{\rho} \bdiv ({\mathcal I}^h_{\bgam} +  {\mathcal I}^h_{\bze})}}^2_{W^{1,\infty}(L^2(\cF^*_h, \R^d))} \Big),
\end{split}
\end{align}
holds true with a constant $C>0$ independent of $h$.
\end{lemma}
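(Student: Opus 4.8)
The plan is to obtain an energy identity for the projected error by combining the consistency property of Proposition~\ref{consistency} with the scheme \eqref{varFormR1-varFormR2-h}. Subtracting \eqref{varFormR1-varFormR2-h} from the identity \eqref{consistent} (both share the same right-hand side) gives, with the total errors $\be_\bgam := \bgam - \bgam_h$, $\be_\bze := \bze - \bze_h$ and $B_h$ the symmetric bilinear form gathering the grad--div, the two numerical-flux and the penalty contributions,
\[
A\big((\ddot\be_\bgam, \ddot\be_\bze), (\bet_h, \btau_h)\big) + (\tfrac{1}{\omega}\cV\dot\be_\bze, \btau_h) + B_h\big((\be_\bgam, \be_\bze), (\bet_h, \btau_h)\big) = 0
\]
for all $(\bet_h, \btau_h) \in \mathcal{H}^{DG}_{\text{sym},h}$. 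Inserting the splitting $\be_\bgam = \mathcal I^h_\bgam + \be^h_\bgam$, $\be_\bze = \mathcal I^h_\bze + \be^h_\bze$ and moving the interpolation part to the right, I would use that $(\mathcal I^h_\bgam, \mathcal I^h_\bze) \in \mathcal{H}^+_{\text{sym}}$, hence $\jump{\mathcal I^h_\bgam + \mathcal I^h_\bze} = \mathbf 0$, so that $B_h$ evaluated on $(\mathcal I^h_\bgam, \mathcal I^h_\bze)$ retains only the grad--div term and a single flux term.

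Next I would test with the velocity of the discrete error, $(\bet_h, \btau_h) = (\dot\be^h_\bgam, \dot\be^h_\bze)$. By symmetry of $A$ and $B_h$ the inertial and stiffness terms collapse into $\tfrac{\mathrm d}{\mathrm dt}\big[\tfrac12 A(\dot\be^h, \dot\be^h) + \tfrac12 B_h(\be^h, \be^h)\big]$, while the viscoelastic term yields the nonnegative dissipation $(\tfrac{1}{\omega}\cV\dot\be^h_\bze, \dot\be^h_\bze) \geq 0$, which I keep on the left. The bracketed functional equals $\mathcal{E}_h(\be^h) - (\mean{\tfrac{1}{\rho}\bdiv_h(\be^h_\bgam + \be^h_\bze)}, \jump{\be^h_\bgam + \be^h_\bze})_{\cF^*_h}$; to control it by $\mathcal{E}_h(\be^h)$ I would estimate the cross term through the discrete trace inequality \eqref{discTrace} by $\tfrac{C_{\textup{tr}}}{\rho^-}\norm{\bdiv_h(\be^h_\bgam + \be^h_\bze)}_{0,\Omega}\norm{h_\cF^{-1/2}\jump{\be^h_\bgam + \be^h_\bze}}_{0,\cF^*_h}$ and absorb it via Young's inequality. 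This is where the threshold $\texttt{a}_0$ enters: for $\texttt{a} \geq \texttt{a}_0$ the penalty contribution of $\mathcal{E}_h$ dominates and one obtains the equivalence $\tfrac12 A(\dot\be^h, \dot\be^h) + \tfrac12 B_h(\be^h, \be^h) \simeq \mathcal{E}_h(\be^h)$, the latter being comparable to the three squared norms in \eqref{eq-extra-1DG}.

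I would then integrate in time over $[0, t]$. The vanishing initial data $(\be^h_\bgam, \be^h_\bze)(0) = \mathbf 0$, $(\dot\be^h_\bgam, \dot\be^h_\bze)(0) = \mathbf 0$ annihilate the lower endpoint, and discarding the dissipation bounds $\mathcal{E}_h(\be^h)(t)$ by the time-integral of the right-hand side. The contributions $A(\ddot{\mathcal I}^h, \cdot)$ and $(\tfrac{1}{\omega}\cV\dot{\mathcal I}^h_\bze, \cdot)$ are handled immediately by Cauchy--Schwarz and Young, producing the $W^{2,\infty}(L^2)$ interpolation data together with $\int_0^t \mathcal{E}_h(\be^h)$. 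The delicate manoeuvre --- and the step I expect to be the main obstacle --- is the grad--div and flux integral $\int_0^t (\tfrac{1}{\rho}\bdiv(\mathcal I^h_\bgam + \mathcal I^h_\bze), \bdiv_h(\dot\be^h_\bgam + \dot\be^h_\bze))$ and its numerical-flux analogue, since both involve the time derivative of the discrete divergence and of the jump, quantities not controlled by the energy. I would integrate these by parts in time, shifting the derivative onto the smooth interpolation error: the endpoint at $t = 0$ vanishes; the endpoint at $t$ is split by Young with a small parameter, its $\bdiv_h(\be^h)(t)$ and $\jump{\be^h}(t)$ factors being absorbed into $\mathcal{E}_h(\be^h)(t)$ on the left and leaving the $W^{1,\infty}(L^2)$ divergence and mean data; the remaining integrals produce $\bdiv\dot{\mathcal I}^h$ and $\mean{\tfrac{1}{\rho}\bdiv\dot{\mathcal I}^h}$ plus further $\int_0^t \mathcal{E}_h(\be^h)$ terms.

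Collecting all contributions and absorbing the small multiple of $\mathcal{E}_h(\be^h)(t)$ on the left leaves an inequality of the form $\mathcal{E}_h(\be^h)(t) \lesssim (\text{interpolation data}) + \int_0^t \mathcal{E}_h(\be^h)(s)\,\mathrm ds$, where the data are exactly the three $W^{k,\infty}$ quantities appearing in \eqref{stab0}. Grönwall's lemma then removes the integral term at the price of a factor $e^{CT}$, and taking the maximum over $[0, T]$ yields \eqref{stab0}. The regularity hypotheses of Assumption~\ref{assumption2} are used precisely to guarantee that the time derivatives $\ddot{\mathcal I}^h$, $\dot{\mathcal I}^h$ and $\bdiv\dot{\mathcal I}^h$ arising on the right-hand side are well defined.
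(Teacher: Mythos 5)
Your proposal is correct and follows essentially the same path as the paper's proof: the same error identity derived from the consistency result (with $\jump{\mathcal I^h_\bgam + \mathcal I^h_\bze}=\mathbf 0$ reducing the right-hand side to the grad--div and single flux terms), the same test function $(\dot\be^h_\bgam, \dot\be^h_\bze)$, the same absorption of the divergence--jump cross term via the discrete trace inequality \eqref{discTrace} and a sufficiently large penalty parameter $\texttt{a}_0$, and the same integration by parts in time to shift derivatives onto the interpolation error. The only cosmetic difference is that you close the argument with Gr\"onwall's lemma, whereas the paper avoids it by pulling $\max_{[0,T]}\mathcal{E}_h$ out of the time integrals and absorbing the resulting $\tfrac14\max_{[0,T]}\mathcal{E}_h$ contributions directly.
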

\begin{proof} 
We have seen in the proof of Proposition~\ref{consistency} that, under Assumption~\ref{assumption2}(i), $\ddot\bu =  \tfrac{1}{\rho} \big(  \bdiv (\bgam + \bze) + \bF \big) \in L^{\infty}(H^1(\Omega,\R^d))$. Using Assumption~\ref{assumption1} and Assumption~\ref{assumption2}(ii) we can also assert that 
\[
  \frac{\text{d}^3 \bu}{\text{d}t^3} =  \tfrac{1}{\rho} \big(  \bdiv (\dot\bgam + \dot\bze) + \dot\bF \big) \in L^{\infty}(L^2(\Omega,\R^d)) 
  \quad \text{and}\quad 
  \beps(\frac{\text{d}^3\bu}{\text{d}t^3})= \frac{\text{d}^3 \bgam}{\text{d}t^3}\in L^\infty(L^2(\Omega, \bbS)).
\]
Consequently, $\ddot\bu   \in W^{1,\infty}(H^1(\Omega,\R^d))$ and it readily follows that 
\[
  \bdiv( \bgam + \bze ) = \rho \ddot \bu - \bF \in W^{1,\infty}(H^1(\cup_j\Omega_j, \R^d)),
\]
which implies that the interlement traces of $\tfrac{1}{\rho} \bdiv ({\mathcal I}^h_{\bgam} +  {\mathcal I}^h_{\bze})$ involved in \eqref{stab0} are meaningful. This also allows us to introduce  the linear form  $(\bet_h , \btau_h) \mapsto G\Big(({\mathcal I}^h_{\bgam},  {\mathcal I}^h_{\bze}), (\bet_h ,\btau_h)\Big)$ defined on $\mathcal{H}_{\text{sym},h}^{DG}$ by 
\begin{align*}
G\Big(({\mathcal I}^h_{\bgam},  {\mathcal I}^h_{\bze}), &(\bet_h ,\btau_h)\Big) :=  -\Big(\tfrac{1}{\rho} \bdiv ({\mathcal I}^h_{\bgam} +  {\mathcal I}^h_{\bze}), \bdiv_h(\bet_h + \btau_h) \Big)   +  \Big(\mean{\tfrac{1}{\rho} \bdiv ({\mathcal I}^h_{\bgam} +  {\mathcal I}^h_{\bze})}, \jump{\bet_h +  \btau_h}\Big)_{\cF^*_h},
\end{align*}
and we deduce from the Cauchy Schwarz inequality   that, if  $\emph{\texttt{a}} \geq 1$, 
\begin{align}\label{cotaG}
\begin{split}
	&\big|G\Big(({\mathcal I}^h_{\bgam},  {\mathcal I}^h_{\bze}), (\bet_h ,\btau_h)\Big)\Big| 
	\\[0.25ex]
	&\quad \leq \sqrt 2\left( \frac{1}{ \rho^-}\norm*{\bdiv ({\mathcal I}^h_{\bgam} + {\mathcal I}^h_{\bze})}^2_{0,\Omega}  + \norm*{ h_\cF^{1/2} \mean{\tfrac{1}{\rho} \bdiv ({\mathcal I}^h_{\bgam} +  {\mathcal I}^h_{\bze})}}^2_{0,\cF^*_h} \right)^{1/2} \mathcal{E}_h\Big((\bet_h, \btau_h)\Big)^{1/2}.
\end{split}
\end{align}
Next, using the consistency property \eqref{consistent},   it is straightforward to deduce that    
\begin{align}\label{errorIdentity}
\nonumber
&A\Big( (\ddot\be^h_{\bgam} ,  \ddot\be^h_{\bze}), (\bet_h ,   \btau_h) \Big)  + (\tfrac{1}{\omega}\cV\dot\be^h_{\bze}, \btau_h)  + \Big(\tfrac{1}{\rho}\bdiv_h (\be^h_{\bgam} +  \be^h_{\bze}),  \bdiv_h(\bet_h + \btau_h) \Big) 
    \\[0.25ex]
&\quad
  - \Big(\mean{\tfrac{1}{\rho} \bdiv_h (\be^h_{\bgam} +  \be^h_{\bze})}, \jump{\bet_h + \btau_h}\Big)_{\cF^*_h}
  - \Big(\mean{\tfrac{1}{\rho} \bdiv_h (\bet_h + \btau_h)}, \jump{\be^h_{\bgam} +  \be^h_{\bze}}\Big)_{\cF^*_h} 
  \\[0.25ex]
&\quad
+ \Big(\texttt{a} h_\cF^{-1}\jump{\be^h_{\bgam} +  \be^h_{\bze}}, \jump{\bet_h + \btau_h} \Big)_{\cF^*_h} 
= 
- A\Big( (\ddot{\mathcal I}^h_{\bgam} ,  \ddot{\mathcal I}^h_{\bze}), (\bet_h ,   \btau_h) \Big)  - (\tfrac{1}{\omega}\cV\dot{\mathcal I}^h_{\bze},\btau_h) +
G\Big(({\mathcal I}^h_{\bgam} ,  {\mathcal I}^h_{\bze}),(\bet_h ,\btau_h)\Big), 
\nonumber
\end{align}
for all $(\bet_h , \btau_h)\in \mathcal{H}_{\text{sym},h}^{DG}$. The choice $(\bet_h ,\btau_h) = \big( \dot{\be}^h_{\bgam}, \dot{\be}^h_{\bze} \big)(t)$ in  \eqref{errorIdentity} yields  
\begin{align*}
\begin{split}
\dot{\mathcal{E}}_h\Big( \big( \be^h_{\bgam}, \be^h_{\bze} \big) \Big)  
 \leq   \dfrac{\text{d}}{\text{d}t} \Big(\mean{\tfrac{1}{\rho} \bdiv_h (\be^h_{\bgam}+ \be^h_{\bze})}, \jump{\be^h_{\bgam}+ \be^h_{\bze}}\Big)_{\cF^*_h}  
 - A\Big( (\ddot{\mathcal I}^h_{\bgam} ,  \ddot{\mathcal I}^h_{\bze}), \big( \dot{\be}^h_{\bgam}, \dot{\be}^h_{\bze} \big) \Big)  
 \\[0.25ex]
- (\tfrac{1}{\omega}\cV\dot{\mathcal I}^h_{\bze},\dot{\be}^h_{\bze}) +  G\Big(({\mathcal I}^h_{\bgam} ,  {\mathcal I}^h_{\bze}),\big( \dot{\be}^h_{\bgam}, \dot{\be}^h_{\bze} \big)\Big),
\end{split}
\end{align*}
where we took into account that the term $\big( \tfrac{1}{\omega}\cV\dot\be^h_{\bze},\dot\be^h_{\bze}  \big)$ is non-negative. Integrating the last estimate with respect to time we get   
\begin{align}\label{in1}
\begin{split}
\mathcal{E}_h\Big(\big( \be^h_{\bgam}, \be^h_{\bze} \big)\Big) 
&\leq 
  \Big(\mean{\tfrac{1}{\rho} \bdiv_h (\be^h_{\bgam}+ \be^h_{\bze})}, \jump{\be^h_{\bgam}+ \be^h_{\bze}}\Big)_{\cF^*_h}  - \int_0^tA\Big( (\ddot{\mathcal I}^h_{\bgam} ,  \ddot{\mathcal I}^h_{\bze}), \big( \dot{\be}^h_{\bgam}, \dot{\be}^h_{\bze} \big) \Big)\, \text{d}s
  \\
 &
    - \int_0^t(\tfrac{1}{\omega}\cV\dot{\mathcal I}^h_{\bze},\dot{\be}^h_{\bze})\, \text{d}s+  \int_0^t G\Big(({\mathcal I}^h_{\bgam} ,  {\mathcal I}^h_{\bze}),\big( \dot{\be}^h_{\bgam}, \dot{\be}^h_{\bze} \big)\Big)\, \text{d}s.
\end{split}
\end{align}
We will now estimate the different terms of the right-hand side of \eqref{in1} by using repeatedly  the Cauchy-Schwarz inequality, \eqref{contA} followed by Young's inequality $a b \leq \frac{a^2}{4} +  b^2$. Thanks to \eqref{discTrace},  the first term can be bounded as follows: 
\begin{align}\label{babor0}
\begin{split}
	\Big(\mean{\tfrac{1}{\rho} \bdiv_h (\be^h_{\bgam}+ \be^h_{\bze})}, \jump{\be^h_{\bgam}+ \be^h_{\bze}}\Big)_{\cF^*_h} &\leq \norm*{h_\cF^{1/2}\mean{\tfrac{1}{\rho} \bdiv_h (\be^h_{\bgam}+ \be^h_{\bze})} }_{0,\cF^*_h} \norm*{h_\cF^{-1/2}\jump{\be^h_{\bgam}+ \be^h_{\bze}} }_{0,\cF^*_h}
	\\[0.25ex]
	&\leq \frac{C_{\text{tr}}}{\sqrt{\rho^-}} \norm*{\frac{1}{\sqrt \rho} \bdiv_h (\be^h_{\bgam}+ \be^h_{\bze}) }_{0,\Omega}\norm*{h_\cF^{-1/2}\jump{\be^h_{\bgam}+ \be^h_{\bze}} }_{0,\cF^*_h} 
	\\[0.25ex]
	&\leq \frac{1}{4}\max_{[0, T]}\mathcal{E}_h\Big(\big( \be^h_{\bgam}, \be^h_{\bze} \big)\Big) +  \frac{2C_{\text{tr}}^2}{\rho^-}  \max_{[0, T]}\norm*{h_\cF^{-1/2}\jump{\be^h_{\bgam}+ \be^h_{\bze}} }_{0,\cF^*_h}^2.
	\end{split}
\end{align}
For the second term  we have
\begin{align}\label{babor1}
\begin{split}
	&-\int_0^t A\Big( (\ddot{\mathcal I}^h_{\bgam} ,  \ddot{\mathcal I}^h_{\bze}), (\dot{\be}^h_{\bgam} ,  \dot{\be}^h_{\bze}) \Big)\, \text{d}s  - \int_0^t(\tfrac{1}{\omega}\cV\dot{\mathcal I}^h_{\bze},\dot{\be}^h_{\bze}) \, \text{d}s 
	\\[0.25ex]&
	\leq  \max_{[0, T]} A\Big( (\dot{\be}^h_{\bgam} , \dot{\be}^h_{\bze}), (\dot{\be}^h_{\bgam} , \dot{\be}^h_{\bze})\Big)^{1/2}  \int_0^T \Big(\sqrt{M}\norm*{(\ddot{\mathcal I}^h_{\bgam} , \ddot{\mathcal I}^h_{\bze})}_{0,\Omega} + \frac{\sqrt{M}}{\omega_0}\norm*{(\dot{\mathcal I}^h_{\bgam} , \dot{\mathcal I}^h_{\bze})}_{0,\Omega}\Big)\text{d}t
	\\[0.25ex]
	& \leq \frac{1}{4}\max_{[0, T]}\mathcal{E}_h\Big(({\be}^h_{\bgam} , {\be}^h_{\bze})\Big) + 2M T^2 (1 + \frac{1}{\omega_0})^2 \norm*{(\mathcal I_{\bgam}^h, \mathcal I_{\bze}^h)}^2_{W^{2,\infty}(L^2(\Omega, \mathbb M\times \mathbb M))}.
	\end{split}	
\end{align}
Finally, an integration by parts gives   
\begin{equation*}
	\int_0^t G\Big(({\mathcal I}^h_{\bgam} ,  {\mathcal I}^h_{\bze}),\big( \dot{\be}^h_{\bgam}, \dot{\be}^h_{\bze} \big)\Big)\, \text{d}s = -\int_0^t G\Big((\dot{\mathcal I}^h_{\bgam} ,  \dot{\mathcal I}^h_{\bze}),\big( {\be}^h_{\bgam}, {\be}^h_{\bze} \big)\Big)\, \text{d}s + G\Big(({\mathcal I}^h_{\bgam} ,  {\mathcal I}^h_{\bze}),\big( {\be}^h_{\bgam}, {\be}^h_{\bze} \big)\Big),
\end{equation*}
 and applying \eqref{cotaG} we deduce that 
 
 \begin{align}\label{babor3}
\begin{split}
	\big| \int_0^t G\Big(({\mathcal I}^h_{\bgam} &,  {\mathcal I}^h_{\bze}),\big( \dot{\be}^h_{\bgam}, \dot{\be}^h_{\bze} \big)\Big)\, \text{d}s \big| \leq   2 (T+1)^2(1 + \frac{1}{\rho^-})^2\Big( \norm*{\bdiv ({\mathcal I}^h_{\bgam} +  {\mathcal I}^h_{\bze})}^2_{W^{1,\infty}(L^2(\Omega, \R^d))} 
	 \\[0.25ex]
	 &\quad +  \norm*{ h_\cF^{1/2} \mean{\tfrac{1}{\rho} \bdiv ({\mathcal I}^h_{\bgam} +  {\mathcal I}^h_{\bze})}}^2_{W^{1,\infty}(L^2(\cF^*_h, \R^d))} \Big) + \frac{1}{4}\max_{[0, T]}\mathcal{E}\Big(\big( \be^h_{\bgam}, \be^h_{\bze} \big)\Big).
\end{split}
\end{align}

Plugging \eqref{babor0},  \eqref{babor1},  and \eqref{babor3}  in \eqref{in1} and rearranging terms we deduce that \eqref{stab0} is satisfied for $\texttt{a}\geq \texttt{a}_0:=\max\{1, \frac{32C_{\text{tr}}^2}{\rho^-}\}$.   
\end{proof}


\begin{theorem} 
Let $(\bgam, \bze)$ and $(\bgam_h, \bze_h)$ be the solutions of problems \eqref{varFormR1-varFormR2} and  \eqref{varFormR1-varFormR2-h}, respectively. Under Assumptions~\ref{assumption1} and \ref{assumption2}, the  error estimate 
\begin{align}\label{errorE}
\begin{split}
\displaystyle
\max_{t\in [0, T]}&\norm*{(\bgam -\bgam_h, \bze - \bze_h)(t)}_{\mathcal{H}^+_{\text{sym}}(h)} + \max_{t\in [0, T]}\norm*{\big(\dot\bgam -\dot\bgam_h, \dot\bze - \dot\bze_h\big)(t)}_{0,\Omega} 
\\[0.25ex]
&
\leq C \Big( \norm*{(\mathcal I_{\bgam}^h, \mathcal I_{\bze}^h)}_{W^{2,\infty}(L^2(\Omega, \mathbb M\times \mathbb M))} +\norm*{  \bdiv (\mathcal I_{\bgam}^h +  \mathcal I_{\bze}^h)}_{W^{1,\infty}(L^2(\Omega,\R^d))}
\\[0.25ex]
& \qquad \quad  +\norm*{ h_\cF^{1/2} \mean{\tfrac{1}{\rho} \bdiv (\mathcal I_{\bgam}^h +  \mathcal I_{\bze}^h)}}_{W^{1,\infty}(L^2(\cF^*_h,\R^d))} \Big),
\end{split}
\end{align}
holds true for all $\emph{\texttt{a}}\geq \texttt{a}_0$, with $C>0$ independent of $h$. 
\end{theorem}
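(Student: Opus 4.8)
The plan is to combine the splitting already introduced before Lemma~\ref{stability}, namely
$(\bgam - \bgam_h, \bze - \bze_h) = (\mathcal I_{\bgam}^h, \mathcal I_{\bze}^h) + (\be^h_{\bgam}, \be^h_{\bze})$,
with the triangle inequality, controlling the interpolation part $(\mathcal I_{\bgam}^h, \mathcal I_{\bze}^h)$ by elementary arguments and the projected (discrete) part $(\be^h_{\bgam}, \be^h_{\bze})$ through the stability bound of Lemma~\ref{stability}.

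First I would treat the interpolation error. Since $\Pi_h$ ranges in $\cP_{k}(\cT_h,\bbS)\cap H^1(\Omega,\bbS)$, the field $\Pi_h(\bgam+\bze)$ is $H^1$-conforming, and $\bgam+\bze\in H_N(\bdiv,\Omega,\bbS)$; hence $\jump{\mathcal I_{\bgam}^h + \mathcal I_{\bze}^h}=\mathbf 0$ and $\bdiv_h(\mathcal I_{\bgam}^h+\mathcal I_{\bze}^h)=\bdiv(\mathcal I_{\bgam}^h+\mathcal I_{\bze}^h)$. Consequently the jump contribution to $\|(\mathcal I_{\bgam}^h, \mathcal I_{\bze}^h)\|_{\mathcal{H}^+_{\text{sym}}(h)}$ drops out, and at any fixed $t$ this norm, together with $\|(\dot{\mathcal I}_{\bgam}^h, \dot{\mathcal I}_{\bze}^h)\|_{0,\Omega}$, is bounded directly by $\|(\mathcal I_{\bgam}^h, \mathcal I_{\bze}^h)\|_{W^{2,\infty}(L^2(\Omega,\bbM\times\bbM))}$ and $\|\bdiv(\mathcal I_{\bgam}^h+\mathcal I_{\bze}^h)\|_{W^{1,\infty}(L^2(\Omega,\R^d))}$, i.e. by (a subset of) the right-hand side of \eqref{errorE}.

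Second, for the projected error I would invoke the norm equivalence \eqref{eq-extra-1DG}, which shows that $\mathcal{E}_h\big((\be^h_{\bgam}, \be^h_{\bze})\big)^{1/2}$ controls simultaneously $\|(\dot\be^h_{\bgam}, \dot\be^h_{\bze})\|_{0,\Omega}$, $\|\bdiv_h(\be^h_{\bgam}+\be^h_{\bze})\|_{0,\Omega}$ and $\|h_{\cF}^{-1/2}\jump{\be^h_{\bgam}+\be^h_{\bze}}\|_{0,\cF^*_h}$. Taking the maximum over $[0,T]$ and applying Lemma~\ref{stability} (valid for the same $\texttt{a}\geq\texttt{a}_0$) bounds all three quantities by exactly the right-hand side of \eqref{errorE}. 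This already delivers the velocity contribution $\|(\dot\be^h_{\bgam}, \dot\be^h_{\bze})\|_{0,\Omega}$ to the second term of \eqref{errorE}, as well as the $\bdiv$- and jump-parts of $\|(\be^h_{\bgam}, \be^h_{\bze})\|_{\mathcal{H}^+_{\text{sym}}(h)}$.

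The one remaining piece is the plain $L^2$-norm $\|(\be^h_{\bgam}, \be^h_{\bze})\|_{0,\Omega}$ entering the $\mathcal{H}^+_{\text{sym}}(h)$-norm, which the energy functional does not see directly, since $\mathcal{E}_h$ measures the \emph{time derivative} in $L^2$ rather than the function itself. This mismatch is the main point to handle, and I would resolve it with the homogeneous initial data $(\be^h_{\bgam}, \be^h_{\bze})(0)=(\mathbf 0,\mathbf 0)$: writing $(\be^h_{\bgam}, \be^h_{\bze})(t)=\int_0^t (\dot\be^h_{\bgam}, \dot\be^h_{\bze})(s)\,\text{d}s$ gives $\|(\be^h_{\bgam}, \be^h_{\bze})(t)\|_{0,\Omega}\leq T\max_{[0,T]}\|(\dot\be^h_{\bgam}, \dot\be^h_{\bze})\|_{0,\Omega}$, and the right-hand side is already dominated by $\max_{[0,T]}\mathcal{E}_h^{1/2}$, hence by the right-hand side of \eqref{errorE}. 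Collecting the interpolation and projected contributions through the triangle inequality, and taking square roots of the energy estimate from Lemma~\ref{stability}, then yields \eqref{errorE}. Apart from this time-integration step, everything is a routine combination of \eqref{eq-extra-1DG}, Lemma~\ref{stability} and the triangle inequality.
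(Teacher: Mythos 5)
Your proposal is correct and is essentially the paper's own proof: the same splitting plus triangle inequality, Lemma~\ref{stability} together with the lower bound in \eqref{eq-extra-1DG} to control the projected error, and the identity $(\be^h_{\bgam},\be^h_{\bze})(t)=\int_0^t(\dot\be^h_{\bgam},\dot\be^h_{\bze})(s)\,\text{d}s$, valid thanks to the vanishing discrete initial data, to recover the $L^2$-part of the $\mathcal{H}^+_{\text{sym}}(h)$-norm that the energy functional does not see. One caveat on the only step where you are more explicit than the paper: $\jump{\mathcal I^h_{\bgam}+\mathcal I^h_{\bze}}$ indeed vanishes on interior facets, but on the Neumann facets of $\cF^*_h$ it equals $(\mathcal I^h_{\bgam}+\mathcal I^h_{\bze})\bn$, since the quasi-interpolant $\Pi_h$ does not preserve the essential condition $(\bgam+\bze)\bn=\mathbf 0$ --- a point of the same (optimal) order that the paper's terse ``the result follows from the triangle inequality'' also leaves untreated.
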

\begin{proof}
We deduce from \eqref{stab0}, the lower bound in \eqref{eq-extra-1DG},  and 
\[
\norm*{\big( \be^h_{\bgam}, \be^h_{\bze} \big)(t)}_{0,\Omega} = \norm*{\int_0^t \big( \dot{\be}^h_{\bgam}, \dot{\be}^h_{\bze} \big)(s)\, \text{d}s}_{0,\Omega}\leq T \max_{[0,T]} \norm*{\big( \dot{\be}^h_{\bgam}, \dot{\be}^h_{\bze} \big)}_{0,\Omega}, 
\]
that 
\begin{align*}
\max_{[0, T]}&\norm*{ (\dot{\be}^h_{\bgam} , \dot{\be}^h_{\bze})(t)}_{0,\Omega} + \max_{[0, T]}\norm*{ ({\be}^h_{\bgam} , {\be}^h_{\bze})(t)}_{\mathcal{H}^+_{\text{sym}}(h)} 
\lesssim \max_{[0, T]}\norm*{(\mathcal I_{\bgam}^h, \mathcal I_{\bze}^h)}_{W^{2,\infty}(L^2(\Omega, \mathbb M\times \mathbb M))}   
\\[0.25ex]
&+ \max_{[0, T]}\norm*{\bdiv ({\mathcal I}^h_{\bgam} +  {\mathcal I}^h_{\bze})}_{W^{1,\infty}(L^2(\Omega, \R^d))}+  \max_{[0, T]}\norm*{ h_\cF^{1/2} \mean{\tfrac{1}{\rho} \bdiv ({\mathcal I}^h_{\bgam} +  {\mathcal I}^h_{\bze})}}_{W^{1,\infty}(L^2(\cF^*_h, \R^d))},
\end{align*}
and the result follows from the triangle inequality.
\end{proof}

\begin{corollary}
Let $(\bgam, \bze)$ and $(\bgam_h, \bze_h)$ be the solutions of problems \eqref{varFormR1-varFormR2} and  \eqref{varFormR1-varFormR2-h}, respectively. Under Assumptions~\ref{assumption1} and \ref{assumption2}, and if $\bgam, \bze\in W^{2,\infty}( H^{r}(\cup_j\Omega_j,\bbM))$ and  $(\bgam + \bze) \in W^{1,\infty}( H^{r+1}(\cup_j\Omega_j,\bbM))$, with $r\geq 1$, we have that 
\begin{align*}
&\max_{t\in [0, T]}\norm*{(\bgam -\bgam_h, \bze - \bze_h)}_{\mathcal{H}^+_{\text{sym}}(h)} + \max_{t\in [0, T]}\norm*{\big(\dot\bgam -\dot\bgam_h, \dot\bze - \dot\bze_h\big)}_{0,\Omega}   
\\
&\qquad \leq C h^{\min\{r,k\}} \sum_{j=1}^J   \Big( \norm*{(\bet, \btau)}_{W^{2,\infty}(H^r(\Omega_j, \mathbb M\times \mathbb M))} + \norm*{\bet + \btau}_{W^{1,\infty}(H^{r+1}(\Omega_j, \mathbb M))} \Big),
\end{align*}
for all $\emph{\texttt{a}}\geq \texttt{a}_0$, with $C>0$ independent of $h$.
\end{corollary}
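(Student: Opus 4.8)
The plan is to obtain the stated rate by substituting the approximation properties of the quasi-interpolant $\Pi_h$ into the abstract error bound \eqref{errorE} furnished by the preceding theorem. Since the right-hand side of \eqref{errorE} is expressed solely through the interpolation errors $\mathcal{I}_{\bgam}^h = \bgam - \Pi_h\bgam$ and $\mathcal{I}_{\bze}^h = \bze - \Pi_h\bze$, measured in the time-Sobolev norms $W^{2,\infty}(L^2)$ and $W^{1,\infty}(L^2)$ together with the facet term, the entire task reduces to controlling those three quantities by $h^{\min\{r,k\}}$ times suitable time-Sobolev norms of the exact solution. The triangle inequality already built into the proof of the theorem then closes the estimate.

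The observation that unlocks the argument is that $\Pi_h$ is a bounded linear operator acting purely in the spatial variable and independent of $t$, so it commutes with Bochner differentiation in time: $\tfrac{\mathrm{d}^\ell}{\mathrm{d}t^\ell}\mathcal{I}_{\bgam}^h = \tfrac{\mathrm{d}^\ell \bgam}{\mathrm{d}t^\ell} - \Pi_h\tfrac{\mathrm{d}^\ell \bgam}{\mathrm{d}t^\ell}$, and likewise for $\bze$ and for the sum $\bgam+\bze$. Hence every time derivative appearing in the norms on the right of \eqref{errorE} is itself an interpolation error, to which the spatial estimates underlying Lemma~\ref{maintool} apply at each fixed $t$, after which one takes the essential supremum in time.

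I would then treat the three terms separately by invoking the constituent local bounds established in the proof of Lemma~\ref{maintool}. For the first term, \eqref{esti1} gives $\norm{\tfrac{\mathrm{d}^\ell}{\mathrm{d}t^\ell}(\mathcal{I}_{\bgam}^h, \mathcal{I}_{\bze}^h)}_{0,\Omega} \lesssim h^{\min\{r,k+1\}}\sum_j \norm{(\tfrac{\mathrm{d}^\ell \bgam}{\mathrm{d}t^\ell}, \tfrac{\mathrm{d}^\ell \bze}{\mathrm{d}t^\ell})}_{r,\Omega_j}$ for $\ell=0,1,2$; since $\min\{r,k+1\}\ge\min\{r,k\}$ this is no slower than $h^{\min\{r,k\}}$, and the supremum over $t$ together with the maximum over $\ell$ reproduces exactly the $W^{2,\infty}(H^r(\cup_j\Omega_j,\bbM))$ norm of $(\bgam,\bze)$, finite by the first hypothesis. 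For the divergence term and the facet term, estimates \eqref{esti2} and \eqref{esti3} deliver the factor $h^{\min\{r,k\}}$ against $\sum_j\norm{\tfrac{\mathrm{d}^\ell}{\mathrm{d}t^\ell}(\bgam+\bze)}_{r+1,\Omega_j}$ for $\ell=0,1$, whence the $W^{1,\infty}(H^{r+1})$ norm of $\bgam+\bze$ appears, finite by the second hypothesis. Summing the three contributions gives the claim.

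The argument is essentially mechanical once the commutation of $\Pi_h$ with $\tfrac{\mathrm{d}}{\mathrm{d}t}$ is noted, and the single point demanding care—the closest thing to an obstacle—is that one must \emph{not} apply Lemma~\ref{maintool} as a black box to the second time derivative: its combined right-hand side would then require two time derivatives of $\bgam+\bze$ in $H^{r+1}$, a regularity the hypotheses do not provide. Peeling the lemma apart into \eqref{esti1}–\eqref{esti3} is precisely what lets the $L^2$ interpolation error consume two time derivatives while the divergence-type terms consume only one, matching the asymmetry encoded in the two separate assumptions $(\bgam,\bze)\in W^{2,\infty}(H^r)$ and $\bgam+\bze\in W^{1,\infty}(H^{r+1})$.
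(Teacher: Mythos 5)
Your proof is correct and takes exactly the paper's route: the paper's own proof is the one-liner ``The result is a direct consequence of \eqref{errorE} and Lemma~\ref{maintool}.'' Your extra care in applying the constituent estimates \eqref{esti1}--\eqref{esti3} separately to the different time derivatives---so that the two time derivatives are consumed by the $L^2$ interpolation bound at $H^r$ regularity while the divergence and facet bounds use only one time derivative at $H^{r+1}$ regularity---is a detail the paper leaves implicit, and it correctly matches the asymmetric hypotheses of the corollary.
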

\begin{proof}
The result is a direct consequence of \eqref{errorE} and Lemma~\ref{maintool}. 
\end{proof}

\begin{remark} Note that if the mixed-loading boundary conditions of \eqref{split123} are non-homogeneous 
\begin{equation}\label{eq:bc-nonhomo}
 \bu = \bg_D \quad  \text{on $\Gamma_D\times (0, T]$}, \qquad 
 (\bgam + \bze)\bn = \bg_N \quad  \text{on $\Gamma_N\times (0, T]$},\end{equation}
for sufficiently smooth displacement and traction data $\bg_D,\bg_N$, then the semi-discrete formulation \eqref{varFormR1-varFormR2-h} is modified as follows: 
Find $(\bgam_h, \bze_h)\in \cC^2(\mathcal{H}^{DG}_{\text{sym},h})$  solution of 
\begin{align*}
\nonumber
 &A\Big( (\ddot\bgam_h ,  \ddot\bze_h), (\bet ,   \btau) \Big)  + (\tfrac{1}{\omega}\cV\dot\bze_h,\btau) 
+ \Big(\tfrac{1}{\rho}\bdiv_h(\bgam_h +  \bze_h) , \bdiv_h(\bet +  \btau) \Big)  
\\
&\quad - \Big(\mean{\tfrac{1}{\rho} \bdiv_h (\bgam_h + \bze_h)}, \jump{\bet_h +  \btau_h}\Big)_{\cF^*_h}
- \Big(\mean{\tfrac{1}{\rho} \bdiv_h (\bet_h +  \btau_h)}, \jump{\bgam_h + \bze_h}\Big)_{\cF^*_h}
\nonumber\\
&\qquad + \Big(\texttt{a} h_\cF^{-1}\jump{\bgam_h + \bze_h}, \jump{\bet_h +  \btau_h} \Big)_{\cF^*_h}
 = -\Big(\tfrac{1}{\rho}\bF,  \bdiv_h (\bet_h +  \btau_h) \Big) + \Big(\mean{\tfrac{1}{\rho} \bF}, \jump{\bet_h +  \btau_h}\Big)_{\cF^*_h}
 \nonumber\\
 &\qquad \qquad \qquad \quad 
 + \Big(\ddot{\bg}_D, (\bet_h +  \btau_h)\bn\Big)_{\cF^D_h} 
 + \Big(\texttt{a} h_\cF^{-1}{\bg}_N, (\bet_h +  \btau_h)\bn\Big)_{\cF^N_h}  
 , \quad \forall (\bet_h, \btau_h)\in \mathcal{H}^{DG}_{\text{sym},h}.
 \end{align*}

\end{remark}

\section{The fully discrete scheme and its convergence analysis}\label{sec:fully-discrete}

Given $L\in \mathbb{N}$, we consider a uniform partition of the time interval $[0, T]$ with step size 
$\Delta t := T/L$. Then, for any continuous function $\phi:[0, T]\to \R$ and for each $k\in\{0,1,\ldots,L\}$, 
we denote $\phi^k := \phi(t_k)$, where $t_k := k\,\Delta t$. In addition,  we let $t_{k+\frac{1}{2}}:= \frac{t_{k+1} + t_k}{2}$, 
$\phi^{k+\frac{1}{2}}:= \frac{\phi^{k+1} + \phi^k}{2}$, $\phi^{k-\frac{1}{2}}:= \frac{\phi^{k} + \phi^{k-1}}{2}$, and $\widehat {\phi^k} := \frac{\phi^{k+\frac{1}{2}} + \phi^{k-\frac{1}{2}}}{2}= \frac{\phi^{k-1} + 2\phi^k + \phi^{k+1}}{4}$. We adopt the same notation for 
vector/tensor valued functions. We also introduce the discrete time derivatives
\[
\partial_t \phi^k := \frac{\phi^{k+1} - \phi^k}{\Delta t}, \quad \bar \partial_t \phi^k :=  \frac{\phi^{k} - \phi^{k-1}}{\Delta t}\quad\text{and} \quad \partial^0_t \phi^k := \frac{\phi^{k+1} - \phi^{k-1}}{2\Delta t}.
\]

In what follows we utilise the Newmark trapezoidal rule for the time discretisation of 
\eqref{varFormR1-varFormR2-h}-\eqref{initial-R1-R2-h*c}. Namely, for  $k=1,\ldots,L-1$, 
we seek  $(\bgam_h^{k+1}, \bze_h^{k+1})\in \mathcal{H}^{DG}_{\text{sym},h}$  solution of   
\begin{align}\label{fullyDiscretePb1-Pb2}
\begin{split}
 &A\Big( \partial_t\bar \partial_t(\bgam^k_h ,  \bze^k_h), (\bet ,   \btau) \Big)  + (\tfrac{1}{\omega}\cV\partial^0_t\bze^k_h,\btau_h) 
+ \Big(\tfrac{1}{\rho}\bdiv_h(\widehat{\bgam^k_h} +  \widehat{\bze^k_h}) , \bdiv_h(\bet +  \btau) \Big)  
\\[0.25ex]
&\quad - \Big(\mean{\tfrac{1}{\rho} \bdiv_h (\widehat{\bgam^k_h} +  \widehat{\bze^k_h})}, \jump{\bet_h +  \btau_h}\Big)_{\cF^*_h}
- \Big(\mean{\tfrac{1}{\rho} \bdiv_h (\bet_h +  \btau_h)}, \jump{\widehat{\bgam^k_h} +  \widehat{\bze^k_h}}\Big)_{\cF^*_h}
\\[0.25ex]
&\qquad + \Big(\texttt{a} h_\cF^{-1}\jump{\widehat{\bgam^k_h} +  \widehat{\bze^k_h}}, \jump{\bet_h +  \btau_h} \Big)_{\cF^*_h}
 = -\Big(\tfrac{1}{\rho}\bF,  \bdiv_h (\bet_h +  \btau_h) \Big)
 \\[0.25ex] 
 &\qquad \quad + \Big(\mean{\tfrac{1}{\rho} \bF}, \jump{\bet_h +  \btau_h}\Big)_{\cF^*_h}, \quad \forall (\bet_h, \btau_h)\in \mathcal{H}^{DG}_{\text{sym},h}.
  \end{split}
 \end{align}
 For the sake of simplicity, we assume that the scheme \eqref{fullyDiscretePb1-Pb2} is started up with 
 \begin{equation}\label{initial-R1-R2-h*c*}
 	(\bgam^0_h, \bze^0_h) = \Pi_h( \bgam^0, \bze^0 )\quad \text{and}\quad (\bgam^1_h, \bze^1_h) = \Pi_h( \bgam(t_1), \bze(t_1) ). 
 \end{equation}
 Then, we introduce the projected error $\big(\be^k_{h,\bgam}, \be^k_{h,\bze}\big) = \big(\Pi_h\bgam(t_k) -\bgam^k_h, \Pi_h\bze(t_k) - \bze^k_h\big)\in \mathcal{H}^{DG}_{\text{sym},h}$ and notice that, because of \eqref{initial-R1-R2-h*c*}, we can ignore the error at the first two initial steps since $\big(\be^0_{h,\bgam}, \be^0_{h,\bze}\big) = (\mathbf 0, \mathbf 0)$ and $\big(\be^1_{h,\bgam}, \be^1_{h,\bze}\big) = (\mathbf 0, \mathbf 0)$. 
 
 We begin our convergence analysis by providing a stability estimate in terms of the fully discrete energy functional $\mathcal{E}^k_h$ given by 
\begin{align*}
\mathcal{E}^k_h:= \frac{1}{2} A\Big( \partial_t (\be^k_{\bgam,h} , \be^k_{\bze,h}), 
 \partial_t (\be^k_{\bgam,h} , \be^k_{\bze,h}) \Big) + \frac{1}{2}  
\norm*{\tfrac{1}{\sqrt{\rho}} \bdiv_h \big(\be^{k+\frac{1}{2}}_{\bgam,h} + \be^{k+\frac{1}{2}}_{\bze,h}\big)}_{0,\Omega}^2 
+ \frac{1}{2}\norm*{\texttt{a}^{\frac{1}{2}} h_\cF^{-\frac{1}{2}}\jump{\be^{k+\frac{1}{2}}_{\bgam,h} + \be^{k+\frac{1}{2}}_{\bze,h} }}_{0,\cF^*_h}^2.
\end{align*}
We notice that  
\begin{multline}\label{eq-extra-1DG-fully} 
C^-\Big( \norm*{ \partial_t (\be^k_{\bgam,h} , \be^k_{\bze,h}) }_{0,\Omega}^2 + \norm*{ \bdiv_h (\be^{k+\frac{1}{2}}_{\bgam,h} + \be^{k+\frac{1}{2}}_{\bze,h}) }_{0,\Omega}^2 + \norm*{h_{\cF}^{-1/2} \jump{\be^{k+\frac{1}{2}}_{\bgam,h} + \be^{k+\frac{1}{2}}_{\bze,h}}}^2_{0,\cF^*_h} \Big)\leq 	\mathcal{E}^k_h
\\[0.25ex]
  \leq  
  C^+ \Big( \norm*{ \partial_t (\be^k_{\bgam,h} , \be^k_{\bze,h}) }_{0,\Omega}^2 + \norm*{ \bdiv_h (\be^{k+\frac{1}{2}}_{\bgam,h} + \be^{k+\frac{1}{2}}_{\bze,h}) }_{0,\Omega}^2 + \norm*{h_{\cF}^{-1/2} \jump{\be^{k+\frac{1}{2}}_{\bgam,h} + \be^{k+\frac{1}{2}}_{\bze,h}}}^2_{0,\cF^*_h}\Big),
\end{multline}
with the same constants $C^-$ and $C^+$ appearing in  \eqref{eq-extra-1DG}.

\begin{lemma}
Under Assumptions~\ref{assumption1} and \ref{assumption2},  the estimate 
\begin{align}\label{full4}
\begin{split}
\max_n\mathcal{E}^n_h&\leq C \Big(   \max_n\norm*{(\mathfrak{X}_1^n, \mathfrak{X}_2^n)}^2_{0,\Omega} + \max_n\norm*{\mathfrak{X}_3^n}^2_{0,\Omega}  +  \max_n\norm{\bdiv  \mathfrak{X}_4^n }_{0,\Omega} 
\\[0.25ex]
& 
 + \max_n\norm*{h_\cF^{\frac{1}{2}}\mean{\tfrac{1}{\rho}\bdiv \mathfrak{X}_4^n}}^2_{0,\cF^*_h} + \max_n\norm*{\bdiv  (\partial_t  \mathfrak{X}_4^n)}^2_{0,\Omega} 
 + \max_n\norm*{h_\cF^{\frac{1}{2}}\mean{\tfrac{1}{\rho}\bdiv  (\partial_t  \mathfrak{X}_4^n)}}^2_{0,\cF^*_h}\Big),  
 \end{split}
\end{align}
holds true for all $\texttt{a}\geq \texttt{a}_0$, with $C>0$ independent of $h$ and $\Delta t$.
\end{lemma}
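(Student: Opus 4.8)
The plan is to reproduce, at the fully discrete level, the energy argument of Lemma~\ref{stability}, replacing time differentiation and integration by the Newmark difference operators and by summation by parts. First I would establish a \emph{discrete consistency identity}: evaluating the consistency relation \eqref{consistent} of Proposition~\ref{consistency} at the nodes $t_k$ and feeding the nodal values of the exact solution into the scheme \eqref{fullyDiscretePb1-Pb2}, the mismatch between the operators $\partial_t\bar\partial_t$, $\partial^0_t$, $\widehat{(\cdot)}$ and the exact derivatives produces time-truncation defects which, together with the $\Pi_h$-interpolation errors $(\mathcal I^h_{\bgam},\mathcal I^h_{\bze})$, I would collect into the quantities $\mathfrak{X}_1^k,\dots,\mathfrak{X}_4^k$. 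Subtracting the scheme from this identity yields an error equation for the projected error $(\be^k_{\bgam,h},\be^k_{\bze,h})$ whose left-hand side has exactly the algebraic form of \eqref{fullyDiscretePb1-Pb2} and whose right-hand side is an $A$-term in $(\mathfrak X_1^k,\mathfrak X_2^k)$, a damping term in $\mathfrak X_3^k$, and a $G$-type term built from $\mathfrak X_4^k$, mirroring \eqref{errorIdentity}.

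\textbf{Testing and telescoping.} I would then test the error equation at index $k$ with the centred velocity $\partial^0_t(\be^k_{\bgam,h},\be^k_{\bze,h})$. The crucial point is that the Newmark averaging is tailored to this test function: using $\partial^0_t\phi^k=\tfrac12(\partial_t\phi^k+\partial_t\phi^{k-1})$, $\partial_t\bar\partial_t\phi^k=\tfrac1{\Delta t}(\partial_t\phi^k-\partial_t\phi^{k-1})$, $\widehat{\phi^k}=\tfrac12(\phi^{k+\frac12}+\phi^{k-\frac12})$ and $\partial^0_t\phi^k=\tfrac1{\Delta t}(\phi^{k+\frac12}-\phi^{k-\frac12})$, together with the symmetry of $A$ and the identity $A(a-b,a+b)=A(a,a)-A(b,b)$, each principal term collapses into a telescoping difference: the inertial term gives $\tfrac1{\Delta t}\big(\tfrac12 A(\partial_t\be^k,\partial_t\be^k)-\tfrac12 A(\partial_t\be^{k-1},\partial_t\be^{k-1})\big)$, and the grad-div and jump-penalty terms give the corresponding differences of $\be^{k\pm\frac12}$. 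Hence the left-hand side is precisely $\tfrac1{\Delta t}(\mathcal E^k_h-\mathcal E^{k-1}_h)$ plus the damping contribution $(\tfrac1\omega\cV\partial^0_t\be^k_{\bze,h},\partial^0_t\be^k_{\bze,h})\ge0$, which I would discard. Multiplying by $\Delta t$ and summing for $k=1,\dots,n$ telescopes to $\mathcal E^n_h$, since the initialisation \eqref{initial-R1-R2-h*c*} forces $(\be^0_{\bgam,h},\be^0_{\bze,h})=(\be^1_{\bgam,h},\be^1_{\bze,h})=(\mathbf0,\mathbf0)$ and therefore $\mathcal E^0_h=0$.

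\textbf{Bounding the right-hand side.} It remains to bound $\Delta t\sum_{k=1}^n(\text{RHS at }k)$. The $A$-terms in $(\mathfrak X_1^k,\mathfrak X_2^k)$ and $\mathfrak X_3^k$ are handled exactly as in \eqref{babor1} via \eqref{contA}, Cauchy--Schwarz and $\norm{\partial^0_t\be^k}\le C\max_j(\mathcal E^j_h)^{1/2}$ from \eqref{eq-extra-1DG-fully}, the factor $\Delta t\,n\le T$ replacing the time integral. For the $G$-term I would perform a discrete summation by parts (Abel's identity) to transfer $\partial^0_t$ from the error onto $\mathfrak X_4$, producing the $\partial_t\mathfrak X_4^k$ contributions and endpoint terms, in direct analogy with the continuous integration by parts preceding \eqref{babor3}; each factor is then controlled through the discrete trace inequality \eqref{discTrace} and the bound \eqref{cotaG} for $G$. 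Young's inequality splits every contribution into a data part and a fraction of $\max_n\mathcal E^n_h$; choosing the fractions to sum to $\tfrac34$ and taking the maximum over $n$ lets me absorb them, the threshold $\texttt{a}\ge\texttt{a}_0$ being needed precisely to dominate the jump-penalty constant coming from \eqref{discTrace}. This yields \eqref{full4}.

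\textbf{Main obstacle.} The delicate part is the discrete summation by parts on the $G$-term together with the half-integer index bookkeeping: I must verify that the telescoping of the principal terms is \emph{exact} (this is where the matching of $\widehat{(\cdot)}$ with $\partial^0_t$ and the symmetry of $A$ are essential) and that the Abel boundary terms produced when shifting $\partial^0_t$ onto $\mathfrak X_4$ are controlled by $\mathcal E^n_h$ at the \emph{endpoint}, so that they can be absorbed rather than generating uncontrolled growth. As in the semidiscrete case, no Gronwall inequality and hence no exponential factor is needed, since each discrete sum only contributes the benign factor $\Delta t\,n\le T$.
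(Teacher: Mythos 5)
Your overall strategy is the same as the paper's: the error equation \eqref{projectedErrorEqk} with the consistency defects $\mathfrak{X}_1^k,\dots,\mathfrak{X}_4^k$, the test function $\partial^0_t\big(\be^k_{h,\bgam},\be^k_{h,\bze}\big)$, telescoping of the energy using the vanishing of the errors at $k=0,1$, an Abel summation by parts to move the discrete time derivative onto $\mathfrak{X}_4^k$, and Young-type absorption of fractions of $\max_n\mathcal{E}^n_h$.

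There is, however, one genuine gap in your bookkeeping of the left-hand side. Besides the three symmetric forms and the damping term, the error equation contains the two mean--jump coupling terms $-\big(\mean{\tfrac{1}{\rho}\bdiv_h(\widehat{\be^k_{h,\bgam}}+\widehat{\be^k_{h,\bze}})},\jump{\partial_t(\be^{k-\frac12}_{h,\bgam}+\be^{k-\frac12}_{h,\bze})}\big)_{\cF^*_h}$ and $-\big(\mean{\tfrac{1}{\rho}\bdiv_h\partial_t(\be^{k-\frac12}_{h,\bgam}+\be^{k-\frac12}_{h,\bze})},\jump{\widehat{\be^k_{h,\bgam}}+\widehat{\be^k_{h,\bze}}}\big)_{\cF^*_h}$. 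Your assertion that ``the left-hand side is precisely $\tfrac{1}{\Delta t}(\mathcal{E}^k_h-\mathcal{E}^{k-1}_h)$ plus the damping contribution'' is therefore false as stated: these terms neither vanish nor belong to $\mathcal{E}^k_h$. They do telescope --- writing $\widehat{\be^k}=\tfrac12(\be^{k+\frac12}+\be^{k-\frac12})$ and $\partial_t\be^{k-\frac12}=\tfrac{1}{\Delta t}(\be^{k+\frac12}-\be^{k-\frac12})$, their sum equals $-\tfrac{1}{\Delta t}$ times the difference of $\big(\mean{\tfrac{1}{\rho}\bdiv_h(\cdot)},\jump{\cdot}\big)_{\cF^*_h}$ evaluated at $\be^{k+\frac12}$ and at $\be^{k-\frac12}$ --- but the telescoped quantity lies outside the energy, so after summation it leaves the endpoint term $\big(\mean{\tfrac{1}{\rho}\bdiv_h(\be^{n+\frac12}_{\bgam,h}+\be^{n+\frac12}_{\bze,h})},\jump{\be^{n+\frac12}_{\bgam,h}+\be^{n+\frac12}_{\bze,h}}\big)_{\cF^*_h}$ appearing in \eqref{full2}. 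This term is where the hypothesis $\texttt{a}\geq\texttt{a}_0$ is actually consumed: bounding it as in \eqref{may} via \eqref{discTrace} and Young produces the fixed constant $\tfrac{2C_{\text{tr}}^2}{\rho^-}$ in front of $\norm*{h_\cF^{-1/2}\jump{\be^{n+\frac12}_{\bgam,h}+\be^{n+\frac12}_{\bze,h}}}^2_{0,\cF^*_h}$, and absorbing that into the penalty part of $\mathcal{E}^n_h$ requires $\texttt{a}$ large (the paper's $\texttt{a}_0=\max\{1,32C_{\text{tr}}^2/\rho^-\}$). Your attribution of the threshold to the Abel boundary terms of the $G$-sum is inaccurate: there the jump factor can be estimated by $(2/\texttt{a})^{1/2}(\max_n\mathcal{E}^n_h)^{1/2}$ and split by Young with an arbitrarily small fraction, so only $\texttt{a}\geq 1$ is used. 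Once you insert the missing coupling terms, track their telescoped endpoint contribution, and bound it as in \eqref{may}, your argument coincides with the paper's proof.
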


\begin{proof}
It follows from \eqref{consistent} that the projected error $\big(\be^k_{h,\bgam}, \be^k_{h,\bze}\big) \in \mathcal{H}^{DG}_{\text{sym},h}$ solves 
\begin{align}\label{projectedErrorEqk}
\begin{split}
&A\Big( \partial_t\bar \partial_t(\be^k_{h,\bgam}, \be^k_{h,\bze}), (\bet_h ,   \btau_h) \Big)  + (\tfrac{1}{\omega}\cV\partial^0_t\be^k_{h,\bze},\btau_h) 
+ \Big(\tfrac{1}{\rho}\bdiv_h(\widehat{\be^k_{h,\bgam}} +  \widehat{\be^k_{h,\bze}}) , \bdiv_h(\bet_h +  \btau_h) \Big)  
\\[.5ex]
&\quad - \Big(\mean{\tfrac{1}{\rho} \bdiv_h (\widehat{\be^k_{h,\bgam}} +  \widehat{\be^k_{h,\bze}})}, \jump{\bet_h +  \btau_h}\Big)_{\cF^*_h}
- \Big(\mean{\tfrac{1}{\rho} \bdiv_h (\bet_h +  \btau_h)}, \jump{\widehat{\be^k_{h,\bgam}} +  \widehat{\be^k_{h,\bze}}}\Big)_{\cF^*_h}
\\[.5ex]
&\qquad + \Big(\texttt{a} h_\cF^{-1}\jump{\widehat{\be^k_{h,\bgam}} +  \widehat{\be^k_{h,\bze}}}, \jump{\bet_h +  \btau_h} \Big)_{\cF^*_h}
 = G^k((\bet_h, \btau_h)), 
\end{split}
\end{align}
for all $(\bet_h, \btau_h)\in \mathcal{H}^{DG}_{\text{sym},h}$, where  
\begin{align*}
	G^k((\bet_h, \btau_h)) &:=A\Big( (\mathfrak{X}_1^k, \mathfrak{X}_2^k)  , (\bet_h, \btau_h) \Big)  + (\tfrac{1}{\omega}\cV\mathfrak{X}_3^k,\btau_h) + \big(\tfrac{1}{\rho} \bdiv \mathfrak{X}_4^k,  \bdiv_h (\bet_h + \btau_h) \big)
	\\[0.25ex]
	&\quad - \big(\mean{\tfrac{1}{\rho} \bdiv  \mathfrak{X}_4^k }, \jump{\bet_h + \btau_h}\big)_{\cF_h},
\end{align*}
with consistency functions 
\begin{align*}
	\mathfrak{X}_1^k&:=  \Pi_h \partial_t\bar \partial_t \bgam(t_k) - \ddot\bgam(t_k), 
 & \mathfrak{X}_2^k &:=   \Pi_h \partial_t\bar \partial_t\bze(t_k) - \ddot\bze(t_k), 
	\\ 
	\mathfrak{X}_3^k &:=   \Pi_h \partial^0_t \bze(t_k) - \dot\bze(t_k), & \mathfrak{X}_4^k &:= \Pi_h (\widehat{\bgam(t_k)} + \widehat{\bze(t_k)}) - (\bgam + \bze)(t_k).
\end{align*}  

	Taking  $(\bet_h, \btau_h) = \partial^0_t \big(\be^k_{h,\bgam}, \be^k_{h,\bze}\big)= \partial_t (\be_{h,\bgam}^{k-\frac{1}{2}}, \be_{h,\bze}^{k-\frac{1}{2}})$  in \eqref{projectedErrorEqk} 
	 yields the identity 
\begin{align*}
	& A\Big(  \partial_t\bar\partial_t( \be^k_{\bgam,h},  \be^k_{\bze,h}), 
  \partial_t (\be_{h,\bgam}^{k-\frac{1}{2}}, \be_{h,\bze}^{k-\frac{1}{2}}) \Big) + 
 (\tfrac{1}{\omega}\cV\partial^0_t\be^k_{h,\bze},\partial^0_t\be^k_{h,\bze}) \\[2ex]
\displaystyle
&+  \Big(\tfrac{1}{\rho}\bdiv_h  \big(\widehat{\be^k_{h,\bgam}} +  \widehat{\be^k_{h,\bze}}\big)  , \bdiv_h  \partial_t (\be_{h,\bgam}^{k-\frac{1}{2}} + \be_{h,\bze}^{k-\frac{1}{2}}) \Big) 
+  \Big(\texttt{a} h_\cF^{-1}\jump{ \big(\widehat{\be^k_{h,\bgam}} +  \widehat{\be^k_{h,\bze}}\big)}, \jump{ \partial_t (\be_{h,\bgam}^{k-\frac{1}{2}} + \be_{h,\bze}^{k-\frac{1}{2}})} \Big)_{\cF^*_h}
\\[0.25ex]
& -\Big(\mean{\tfrac{1}{\rho} \bdiv_h (\widehat{\be^k_{h,\bgam}} +  \widehat{\be^k_{h,\bze}})}, \jump{\partial_t (\be_{h,\bgam}^{k-\frac{1}{2}} + \be_{h,\bze}^{k-\frac{1}{2}})}\Big)_{\cF^*_h}
- \Big(\mean{\tfrac{1}{\rho} \bdiv_h \partial_t (\be_{h,\bgam}^{k-\frac{1}{2}} + \be_{h,\bze}^{k-\frac{1}{2}})}, \jump{\widehat{\be^k_{h,\bgam}} +  \widehat{\be^k_{h,\bze}}}\Big)_{\cF^*_h} 
\\[0.25ex]
&= G^k\Big(\partial^0_t \big(\be^k_{h,\bgam}, \be^k_{h,\bze}\big)\Big).
\end{align*}
Unfolding the first bilinear form  of the previous identity  according to the decompositions 
\[
  \bar\partial_t \be^k_{\star,h} = \frac{\be^k_{\star,h} - \be^{k-1}_{\star,h}}{\Delta t} \quad  \text{and} \quad \be^{k-\frac{1}{2}}_{\star,h} = \frac{\be^k_{\star,h} + \be^{k-1}_{\star,h}}{2} \quad (\star \in \set{\bgam, \bze}),
\]
and the last four bilinear forms  according to the decompositions 
\[
  \widehat{\be^k_{\star,h}} = \frac{\be^{k+\frac{1}{2}}_{\star,h} + \be^{k-\frac{1}{2}}_{\star,h}}{2}\quad  \text{and} \quad  \partial_t \be_{h,\star}^{k-\frac{1}{2}} = \frac{\be^{k+\frac{1}{2}}_{\star,h} - \be^{k-\frac{1}{2}}_{\star,h}}{\Delta t} \quad  (\star \in \set{\bgam, \bze}),
\]
we readily deduce that 
\begin{align*}
& \mathcal{E}^k_h - \mathcal{E}^{k-1}_h + \Delta t
 (\tfrac{1}{\omega}\cV\partial^0_t\be^k_{h,\bze},\partial^0_t\be^k_{h,\bze})
 =   \Delta t G^k\Big(\partial^0_t \big(\be^k_{h,\bgam}, \be^k_{h,\bze}\big)\Big)
 \\[0.25ex]
 &+\Big( \mean{\tfrac{1}{\rho} \bdiv_h \big(\be^{k+\frac{1}{2}}_{\bgam,h} + \be^{k+\frac{1}{2}}_{\bze,h}\big)}, \jump{\be^{k+\frac{1}{2}}_{\bgam,h} + \be^{k+\frac{1}{2}}_{\bze,h}} \Big)_{\cF^*_h} -  \Big( \mean{\tfrac{1}{\rho} \bdiv_h \big(\be^{k-\frac{1}{2}}_{\bgam,h} + \be^{k-\frac{1}{2}}_{\bze,h}\big)}, \jump{\be^{k-\frac{1}{2}}_{\bgam,h} + \be^{k-\frac{1}{2}}_{\bze,h}} \Big)_{\cF^*_h}.
\end{align*}
Summing the foregoing identity over $k=1,\ldots, n$,  we obtain the estimate 
\begin{align}\label{full2}
\begin{split}
	\mathcal{E}^n_h
\leq \Delta t \sum_{k= 1}^n  G^k\Big(\partial^0_t \big(\be^k_{h,\bgam}, \be^k_{h,\bze}\big)\Big) 
+   \Big( \mean{\tfrac{1}{\rho} \bdiv_h \big(\be^{n+\frac{1}{2}}_{\bgam,h} + \be^{n+\frac{1}{2}}_{\bze,h}\big)}, \jump{\be^{n+\frac{1}{2}}_{\bgam,h} + \be^{n+\frac{1}{2}}_{\bze,h}} \Big)_{\cF^*_h} .
\end{split}
\end{align}
Proceeding as in \eqref{babor0} we obtain 
\begin{equation}\label{may}
	\left| \Big( \mean{\tfrac{1}{\rho} \bdiv_h \big(\be^{n+\frac{1}{2}}_{\bgam,h} + \be^{n+\frac{1}{2}}_{\bze,h}\big)}, \jump{\be^{n+\frac{1}{2}}_{\bgam,h} + \be^{n+\frac{1}{2}}_{\bze,h}} \Big)_{\cF^*_h} \right| \leq \frac{2C_{\text{tr}}^2}{\rho^-} \norm*{ h_\cF^{-\frac{1}{2}}\jump{\be^{n+\frac{1}{2}}_{\bgam,h} + \be^{n+\frac{1}{2}}_{\bze,h} }}_{0,\cF^*_h}^2 + \frac{1}{4} \mathcal{E}^n_h.
\end{equation}
Next, we turn to estimate the first term on the right-hand side of \eqref{full2}. We begin by  performing a discrete integration by part in the summations containing the term $\mathfrak{X}_4^k$  to obtain  
\begin{align*}
	\Delta t\sum_{k= 1}^n &G^k\Big(\partial^0_t \big(\be^k_{h,\bgam}, \be^k_{h,\bze}\big)\Big) = \Delta t\sum_{k= 1}^n A\Big( (\mathfrak{X}_1^k, \mathfrak{X}_2^k)  , \partial_t (\be_{h,\bgam}^{k-\frac{1}{2}}, \be_{h,\bze}^{k-\frac{1}{2}}) \Big) 
	+ \Delta t\sum_{k= 1}^n\big(\tfrac{1}{\omega}\cV \mathfrak{X}_3^k,  \partial_t  \be_{h,\bze}^{k-\frac{1}{2}} \big)
	\\[0.25ex]
	 &-  \Delta t\sum_{k= 1}^{n-1}\Big(\tfrac{1}{\rho}\bdiv \partial_t \mathfrak{X}_4^k,  \bdiv_h  \big( \be_{\bgam, h}^{k+\frac{1}{2}} + \be_{\bze, h}^{k+\frac{1}{2}} \big)\Big) +  \Big( \tfrac{1}{\rho}\bdiv  \mathfrak{X}_4^n, \bdiv_h  \big( \be_{\bgam, h}^{n+\frac{1}{2}} + \be_{\bze, h}^{n+\frac{1}{2}} \big)\Big)
	 \\[0.25ex]
	& + \Delta t\sum_{k= 1}^{n-1} \Big(\mean{\tfrac{1}{\rho} \bdiv  \partial_t\mathfrak{X}_4^k }, \jump{\be_{\bgam, h}^{k+\frac{1}{2}} + \be_{\bze, h}^{k+\frac{1}{2}}}\Big)_{\cF^*_h} - \Big(\mean{\tfrac{1}{\rho} \bdiv \mathfrak{X}_4^n }, \jump{\be_{\bgam, h}^{n+\frac{1}{2}} + \be_{\bze, h}^{n+\frac{1}{2}}}\Big)_{\cF^*_h}.
\end{align*}  
It follows now from the Cauchy-Schwarz inequality that
\begin{align*}
	\Delta t &\left| \sum_{k= 1}^n  G^k\Big(\partial^0_t \big(\be^k_{h,\bgam}, \be^k_{h,\bze}\big)\Big) \right| \leq \max_n A\Big( (\mathfrak{X}_1^n, \mathfrak{X}_2^n)  , (\mathfrak{X}_1^n, \mathfrak{X}_2^n)\Big)^{\frac{1}{2}} \max_n A\Big( \partial_t (\be_{h,\bgam}^{n-\frac{1}{2}}, \be_{h,\bze}^{n-\frac{1}{2}})  , \partial_t (\be_{h,\bgam}^{n-\frac{1}{2}}, \be_{h,\bze}^{n-\frac{1}{2}})\Big)^{\frac{1}{2}}   
	\\[0.25ex]
	 & 
	+ \max_n\big(\tfrac{1}{\omega}\cV \mathfrak{X}_3^k,  \mathfrak{X}_3^k \big)^{\frac{1}{2}} \max_n\big(\tfrac{1}{\omega}\cV \partial_t  \be_{h,\bze}^{k-\frac{1}{2}} ,  \partial_t  \be_{h,\bze}^{k-\frac{1}{2}} \big)^{\frac{1}{2}}
	\\[0.25ex]
	 &+   \Big( \max_n \norm*{\tfrac{1}{\sqrt \rho} \bdiv \mathfrak{X}_4^k}_{0,\Omega} + \max_n \norm*{\tfrac{1}{\sqrt \rho} \bdiv \partial_t \mathfrak{X}_4^k}_{0,\Omega} \Big) \max_n \norm*{\tfrac{1}{\sqrt \rho} \bdiv_h \big( \be_{\bgam, h}^{n+\frac{1}{2}} + \be_{\bze, h}^{n+\frac{1}{2}} \big)}_{0,\Omega}
	 \\[0.25ex]
	 &+   \Big( \max_n \norm*{h^{1/2}_{\cF} \mean{\tfrac{1}{ \rho} \bdiv \mathfrak{X}_4^k}}_{0,\cF^*_h} + \max_n \norm*{h^{1/2}_{\cF} \mean{\tfrac{1}{ \rho} \bdiv \partial_t\mathfrak{X}_4^k}}_{0,\cF^*_h} \Big) \max_n \norm*{h^{-1/2}_{\cF} \jump{\be_{\bgam, h}^{n+\frac{1}{2}} + \be_{\bze, h}^{n+\frac{1}{2}}}}_{0,\cF^*_h},
\end{align*} 
for all $1\leq n\leq L$. Finally, using the discrete trace inequality \eqref{discTrace} together with a repeated use of Young's inequality $ab \leq \frac{a^2}{2\epsilon} + \frac{\epsilon b^2}{2}$ with  adequately selected parameters $\epsilon>0$, we deduce that 
\begin{align}\label{june}
\begin{split}
		\Delta t &\left| \sum_{k= 1}^n  G^k\Big(\partial^0_t \big(\be^n_{h,\bgam}, \be^n_{h,\bze}\big)\Big) \right|  \leq \frac{1}{4} \max_n\mathcal{E}^n_h+ C_0 \Bigg( \max_n\norm*{(\mathfrak{X}_1^n, \mathfrak{X}_2^n)}^2_{0,\Omega} + \max_n\norm*{\mathfrak{X}_3^n}^2_{0,\Omega}  
\\[0.25ex]
& 
 + \max_n\norm*{h_\cF^{\frac{1}{2}}\mean{\bdiv \mathfrak{X}_4^n}}^2_{0,\cF^*_h} + \max_n\norm*{\bdiv  (\partial_t  \mathfrak{X}_4^n)}^2_{0,\Omega} 
 + \max_n\norm*{h_\cF^{\frac{1}{2}}\mean{\bdiv  (\partial_t  \mathfrak{X}_4^n)}}^2_{0,\cF^*_h} \Bigg), 
\end{split}
\end{align}
with $C_0>0$ independent of $h$ and $\Delta t$. Combining \eqref{may} and \eqref{june} with \eqref{full2} gives the result for all $\texttt{a}\geq \texttt{a}_0$ with $\texttt{a}_0$ selected as in Lemma~\ref{stability}.
\end{proof}

We need further regularity hypotheses to estimate the different consistency terms appearing on the right-hand side of \eqref{full4}. 
\begin{assumption}\label{assumption3}
	The body force satisfies  $\bF\in W^{3,\infty}(H^1(\cup_j \Omega_j, \R^d))$.	  
\end{assumption}
\begin{assumption}\label{assumption4}
	The solution $(\bgam, \bze)$ of \eqref{varFormR1-varFormR2} satisfies 
	\begin{enumerate}[label=\roman*)]
	\item $(\bgam,\bze)\in W^{4,\infty}(L^2(\Omega),\bbS\times \bbS)$, 
	\item and $\bdiv(\bgam+\bze)\in W^{3,\infty}(L^2(\Omega),\R^d)$.
	\end{enumerate}
\end{assumption}

We point out that, with assumption Assumption~\ref{assumption3} and Assumption~\ref{assumption4} at hand, we can proceed as in the proof of Lemma~\ref{stability} to deduce that 
\begin{equation*}
\bdiv( \bgam + \bze ) = \rho \ddot \bu - \bF \in W^{3,\infty}(H^1(\cup_j\Omega_j, \R^d)).
\end{equation*}
Moreover, we can perform Taylor expansions centered at $t=t_n$ to obtain the expressions 
\begin{align*}
\mathfrak X_1^n &= \Pi_h\ddot{\bgam}(t_n) - \ddot{\bgam}(t_n) + 
\frac{\Delta t^2}{6 } \int_{-1}^{1} (1-|s|)^3\Pi_h\dfrac{\text{d}^4 \bgam}{\text{d}t^4}(t_n + \Delta t\, s)  \, \text{d}s, 
\\[0.25ex]
\mathfrak X_2^n &= \Pi_h\ddot{\bze}(t_n) - \ddot{\bze}(t_n) + 
\frac{\Delta t^2}{6 } \int_{-1}^{1} (1-|s|)^3\Pi_h\dfrac{\text{d}^4 \bze}{\text{d}t^4}(t_n + \Delta t\, s)  \, \text{d}s, 
\\[0.25ex]
 \mathfrak X_3^n &=  \Pi_h\dot{\bze}(t_n) - \dot{\bze}(t_n) + \frac{\Delta t^2}{2 } \int_{-1}^{1} (1 - |s|)^2 \Pi_h\dfrac{\text{d}^3\, \bze}{\text{d}t^3}(t_n + \Delta t\, s)\, \text{d}s,
 \\[0.25ex]
 \mathfrak X_4^n &= (\Pi_h - I)\big(\bgam(t_n) + \bze(t_n)\big)   +    \frac{\Delta t^2}{4} \int_{-1}^{1} (1 - |s|) \Pi_h(\ddot\bgam + \ddot\bze)(t_n + \Delta t\, s)  \, \text{d}s,
\end{align*}
and
\begin{align*}
\partial_t  \mathfrak X_4^n &= (\Pi_h - I)\partial_t\big(\bgam(t_n) + \bze(t_n)\big) + \Pi_h \frac{(\bgam + \bze)(t_{n+2}) - 3 (\bgam + \bze)(t_{n+1}) + 3(\bgam + \bze)(t_{n}) -(\bgam + \bze)(t_{n-1})}{4\Delta t}
\\[0.25ex]
&= \int_0^1 (\Pi_h - I)\big(\dot\bgam + \dot\bze\big)(t_n + \Delta t\, s) \, \text{d}s +
\Delta t^2 \int_0^1 (1-s)^2  \Pi_h \boldsymbol\alpha(s) \, \text{d}s,
\end{align*}
with
\[
  \boldsymbol\alpha(s):=  \dfrac{\text{d}^3 (\bgam + \bze)}{\text{d}t^3} (t_n + 2\Delta t\, s) -\dfrac{3}{8}\dfrac{\text{d}^3 (\bgam + \bze)}{\text{d}t^3} (t_n + \Delta t\, s) + \dfrac{1}{8}\dfrac{\text{d}^3 (\bgam + \bze)}{\text{d}t^3} (t_n - \Delta t\, s).
\]
Applying the stability property \eqref{scott0a} of $\Pi_h$ with $m=0$ we deduce that  
\begin{multline}\label{un}
  \max_n\norm*{(\mathfrak{X}_1^n, \mathfrak{X}_2^n)}_{0,\Omega} + \max_n\norm*{\mathfrak{X}_3^n}_{0,\Omega} \lesssim \norm*{(I - \Pi_h)\bgam}_{W^{2,\infty}(L^2(\Omega, \bbM))} + \norm*{(I - \Pi_h)\bze}_{W^{2,\infty}(L^2(\Omega, \bbM))}
  \\[0.25ex]
  + (\Delta t)^2 \Big( \norm*{\bgam}_{W^{4,\infty}(L^2(\Omega, \bbM))} + \norm*{\bze}_{W^{4,\infty}(L^2(\Omega, \bbM))} \Big),
\end{multline}
 while \eqref{scott0a} with $m=1$ yields
 \begin{multline*}
 	\max_n \norm*{\tfrac{1}{\sqrt \rho} \bdiv \mathfrak{X}_4^k}_{0,\Omega} + \max_n \norm*{\tfrac{1}{\sqrt \rho} \bdiv \partial_t \mathfrak{X}_4^k}_{0,\Omega}
  \\
 \lesssim \norm*{\bdiv (I - \Pi_h)(\bgam + \bze)}_{W^{1,\infty}(L^2(\Omega, \R^d))}   + (\Delta t)^2  \sum_{j=1}^J \norm*{\bgam + \bze}_{W^{3,\infty}(H^1(\Omega_j, \bbM))}. 
 \end{multline*}
Finally, by virtue of \eqref{scott0c}, it holds
\begin{multline}\label{trois}
	\max_n\norm*{h_\cF^{\frac{1}{2}}\mean{\bdiv \mathfrak{X}_4^n}}_{0,\cF^*_h} 
 + \max_n\norm*{h_\cF^{\frac{1}{2}}\mean{\bdiv  (\partial_t  \mathfrak{X}_4^n)}}_{0,\cF^*_h} 
  \\
\lesssim \norm*{h_\cF^{\frac{1}{2}}\mean{\bdiv (I - \Pi_h)(\bgam + \bze)}}_{W^{1,\infty}(L^2(\cF^*_h, \R^d))}    + (\Delta t)^2  \sum_{j=1}^J \norm*{\bgam + \bze}_{W^{3,\infty}(H^1(\Omega_j, \bbM))}. 
\end{multline}
As a consequence of \eqref{un}-\eqref{trois} and the stability estimate \eqref{full4}, we have that
\begin{align}\label{quatre}
\nonumber
\max_n\mathcal{E}^n_h & \leq  	\norm*{(I - \Pi_h)\bgam}_{W^{2,\infty}(L^2(\Omega, \bbM))} + \norm*{(I - \Pi_h)\bze}_{W^{2,\infty}(L^2(\Omega, \bbM))}
\\
  &\quad +
  \norm*{\bdiv (I - \Pi_h)(\bgam + \bze)}_{W^{1,\infty}(L^2(\Omega, \R^d))} 
  + \norm*{h_\cF^{\frac{1}{2}}\mean{\bdiv (I - \Pi_h)(\bgam + \bze)}}_{W^{1,\infty}(L^2(\cF^*_h, \R^d))}
\nonumber  \\
  &\quad + (\Delta t)^2 \Big( \norm*{\bgam}_{W^{4,\infty}(L^2(\Omega, \bbM))} + \norm*{\bze}_{W^{4,\infty}(L^2(\Omega, \bbM))} + \sum_{j=1}^J \norm*{\bgam + \bze}_{W^{3,\infty}(H^1(\Omega_j, \bbM))}\Big),
\end{align}
for all $\texttt{a}\geq \texttt{a}_0$.
We are now in a position to state the following error estimate.
\begin{theorem}
Let $(\bgam, \bze)$ and $\{ (\bgam^n_h, \bze^n_h),\ n= 0,\ldots, L\}$ be the solutions of \eqref{varFormR1-varFormR2} and \eqref{fullyDiscretePb1-Pb2}, respectively. Under Assumption~\ref{assumption3} and Assumption~\ref{assumption4}, we have that   
\begin{align}\label{fullConv1}
\nonumber
\max_n &\norm*{ (\dot\bgam, \dot\bze )(t_{n+\frac{1}{2}}) - \partial_t (\bgam_h^n, \bze_h^n)}_{0,\Omega}+
 \max_n
\norm*{ \bdiv  (\bgam + \bze)(t_{n+\frac{1}{2}}) - \bdiv_h (\bgam_h^{n+\frac{1}{2}} + \bze_h^{n+\frac{1}{2}})}_{0,\Omega} 
\\
&
+ \max_n \norm*{h_\cF^{-\frac{1}{2}} \jump{ (\bgam + \bze)(t_{n+\frac{1}{2}}) -  (\bgam_h^{n+\frac{1}{2}} + \bze_h^{n+\frac{1}{2}}) } }_{0,\cF_h}
\nonumber\\
&\leq C \bigg( 	\norm*{(I - \Pi_h)\bgam}_{W^{2,\infty}(L^2(\Omega, \bbM))} + \norm*{(I - \Pi_h)\bze}_{W^{2,\infty}(L^2(\Omega, \bbM))}
\nonumber\\
  &+
  \norm*{\bdiv (I - \Pi_h)(\bgam + \bze)}_{W^{1,\infty}(L^2(\Omega, \R^d))} 
  + \norm*{h_\cF^{\frac{1}{2}}\mean{\bdiv (I - \Pi_h)(\bgam + \bze)}}_{W^{1,\infty}(L^2(\cF^*_h, \R^d))}
\nonumber  \\
  &+ (\Delta t)^2 \Big( \norm*{\bgam}_{W^{4,\infty}(L^2(\Omega, \bbM))} + \norm*{\bze}_{W^{4,\infty}(L^2(\Omega, \bbM))} + \sum_{j=1}^J \norm*{\bgam + \bze}_{W^{3,\infty}(H^1(\Omega_j, \bbM))}\Big) \bigg),
\end{align}
for all $\texttt{a}\geq \texttt{a}_0$, with $C>0$ independent of $h$ and $\Delta t$.
\end{theorem}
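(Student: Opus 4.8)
The plan is to follow the template of the semi-discrete convergence theorem: I would combine the fully discrete energy estimate \eqref{quatre} for the projected error with a triangle-inequality splitting of the total error, handling separately the temporal consistency gaps between the continuous derivatives and midpoint values on the left-hand side of \eqref{fullConv1} and their discrete counterparts. The starting point is to insert the quasi-interpolant and write, for each relevant index,
\[
(\bgam,\bze)(t_k) - (\bgam^k_h, \bze^k_h) = (I - \Pi_h)(\bgam,\bze)(t_k) + (\be^k_{h,\bgam}, \be^k_{h,\bze}),
\]
which isolates the three projected-error quantities $\partial_t(\be^n_{h,\bgam}, \be^n_{h,\bze})$, $\bdiv_h(\be^{n+\frac12}_{h,\bgam} + \be^{n+\frac12}_{h,\bze})$, and $h_\cF^{-1/2}\jump{\be^{n+\frac12}_{h,\bgam} + \be^{n+\frac12}_{h,\bze}}$. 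These are precisely the terms bounded from below by $\mathcal{E}^n_h$ in \eqref{eq-extra-1DG-fully}, so \eqref{quatre} immediately controls each of them by the right-hand side of \eqref{fullConv1}.

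Next, I would absorb the temporal mismatches. For the velocity term I would use the decomposition
\[
(\dot\bgam,\dot\bze)(t_{n+\frac12}) - \partial_t(\bgam^n_h,\bze^n_h) = \Big[(\dot\bgam,\dot\bze)(t_{n+\frac12}) - \partial_t(\bgam,\bze)(t_n)\Big] + \partial_t(I-\Pi_h)(\bgam,\bze)(t_n) + \partial_t(\be^n_{h,\bgam},\be^n_{h,\bze}),
\]
observing that the centred difference quotient $\partial_t(\bgam,\bze)(t_n)$ approximates $(\dot\bgam,\dot\bze)(t_{n+\frac12})$ to second order, so a Taylor expansion bounds the first bracket by $(\Delta t)^2$ times a $W^{3,\infty}(L^2(\Omega,\bbM))$-seminorm (covered by the $W^{4,\infty}$ terms in \eqref{fullConv1}); the middle term is exactly $\norm*{(I-\Pi_h)(\bgam,\bze)}_{W^{1,\infty}(L^2(\Omega,\bbM))}$, already present on the right of \eqref{fullConv1}; and the last term is handled by the energy as above.

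For the divergence and jump terms I would introduce the time average $(\bgam+\bze)^{n+\frac12}_\ast := \tfrac12\big((\bgam+\bze)(t_{n+1}) + (\bgam+\bze)(t_n)\big)$ and split
\[
(\bgam+\bze)(t_{n+\frac12}) - (\bgam^{n+\frac12}_h + \bze^{n+\frac12}_h) = \Big[(\bgam+\bze)(t_{n+\frac12}) - (\bgam+\bze)^{n+\frac12}_\ast\Big] + (I-\Pi_h)(\bgam+\bze)^{n+\frac12}_\ast + (\be^{n+\frac12}_{h,\bgam} + \be^{n+\frac12}_{h,\bze}).
\]
After applying $\bdiv_h$ and, for the facet terms, the discrete trace inequality \eqref{discTrace}, the midpoint-versus-average bracket contributes an $\mathcal{O}((\Delta t)^2)$ term carrying a $W^{2,\infty}(H^1(\cup_j\Omega_j))$-seminorm of $\bgam+\bze$, the interpolation term reproduces $\norm*{\bdiv(I-\Pi_h)(\bgam+\bze)}_{W^{1,\infty}(L^2(\Omega,\R^d))}$ and $\norm*{h_\cF^{1/2}\mean{\bdiv(I-\Pi_h)(\bgam+\bze)}}_{W^{1,\infty}(L^2(\cF^*_h,\R^d))}$, and the projected part is again controlled by $\mathcal{E}^n_h$. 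A useful simplification occurs for the jump term: since $\bgam+\bze$ is $H(\bdiv)$-conforming and $\Pi_h$ has continuous range, $\jump{(\bgam+\bze)(t)} = \jump{\Pi_h(\bgam+\bze)(t)} = \mathbf 0$ for all $t$, so both the temporal and interpolation contributions to the jump vanish and only the projected error remains.

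Finally, I would collect the three estimates, apply \eqref{quatre} once more, and conclude by the triangle inequality. The main obstacle I anticipate is not conceptual but a matter of careful bookkeeping of the Taylor remainders: one must verify that the centred difference quotient for the velocity and the midpoint-versus-average discrepancy for the stress each genuinely produce second-order-in-$\Delta t$ terms, and that the spatial and temporal regularity they require matches exactly the $W^{4,\infty}(L^2)$ and $W^{3,\infty}(H^1(\Omega_j))$ norms listed on the right-hand side of \eqref{fullConv1}.
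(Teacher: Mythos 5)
Your proposal is correct and follows essentially the same route as the paper's proof, which likewise combines the energy estimate \eqref{quatre} and the lower bound of \eqref{eq-extra-1DG-fully} with a triangle-inequality splitting through $\Pi_h$ (the paper's bound \eqref{uno}) and the Taylor expansion of $(\dot\bgam,\dot\bze)(t_{n+\frac{1}{2}})-\partial_t(\bgam,\bze)(t_n)$ (the paper's bound \eqref{unodos}); if anything, you are more explicit than the paper about the midpoint-versus-average discrepancy in the divergence term, which \eqref{uno} silently absorbs. One shared caveat: your claim $\jump{\Pi_h(\bgam+\bze)}=\mathbf{0}$ is valid on interior facets but not on Neumann facets, where $\Pi_h$ need not preserve the zero normal trace; the paper makes the same implicit simplification, and the extra facet term it would generate is of the same optimal order $h^{\min\{r,k\}}$ by \eqref{multiplicativetrace} and \eqref{scott0}, so the conclusion is unaffected.
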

\begin{proof}
It follows from \eqref{quatre}, the triangle inequality  and the lower bound of \eqref{eq-extra-1DG-fully}  that 
\begin{align}\label{uno}
\nonumber
& \max_n \norm*{ \partial_t(\bgam, \bze )(t_{n}) - \partial_t (\bgam_h^n, \bze_h^n)}_{0,\Omega}+
 \max_n
\norm*{ \bdiv  (\bgam + \bze)(t_{n+\frac{1}{2}}) - \bdiv_h (\bgam_h^{n+\frac{1}{2}} + \bze_h^{n+\frac{1}{2}})}_{0,\Omega} 
\\[0.25ex]
\nonumber &
\quad + \max_n \norm*{h_\cF^{-\frac{1}{2}} \jump{ (\bgam + \bze)(t_{n+\frac{1}{2}}) -  (\bgam_h^{n+\frac{1}{2}} + \bze_h^{n+\frac{1}{2}}) } }_{0,\cF^*_h}
\\[0.25ex]
\nonumber & \lesssim  	\norm*{(I - \Pi_h)\dot\bgam}_{L^{\infty}(L^2(\Omega, \bbM))}  + \norm*{(I - \Pi_h)\dot\bze}_{L^{\infty}(L^2(\Omega, \bbM))}+
  \norm*{\bdiv (I - \Pi_h)(\bgam + \bze)}_{L^{\infty}(L^2(\Omega, \R^d))}
\\[0.25ex]
  &  \quad 
  + \norm*{h_\cF^{\frac{1}{2}}\mean{\bdiv (I - \Pi_h)(\bgam + \bze)}}_{L^{\infty}(L^2(\cF^*_h, \R^d))}+ \max_n\mathcal{E}^n_h.
\end{align}
Moreover, using again the triangle inequality, 
\begin{align*}
\begin{split}
\max_n &\norm*{ (\dot\bgam, \dot\bze )(t_{n+\frac{1}{2}}) - \partial_t (\bgam_h^n, \bze_h^n)}_{0,\Omega} \leq \max_n \norm*{\partial_t(\bgam, \bze )(t_{n}) - \partial_t (\bgam_h^n, \bze_h^n)}_{0,\Omega} 
\\[0.25ex]
&+ \max_n \norm*{ (\dot\bgam, \dot\bze )(t_{n+\frac{1}{2}}) - \partial_t(\bgam, \bze )(t_{n})}_{0,\Omega}
\end{split}
\end{align*}
and the Taylor expansion  
\begin{equation*}
(\dot\bgam, \dot\bze )(t_{n+\frac{1}{2}}) - \partial_t(\bgam, \bze )(t_{n}) =  - \frac{\Delta t^2}{8} \int_{-1}^{1} (1 - |s|)^2 \dfrac{\text{d}^3 (\bgam, \bze )}{\text{d}t^3} (t_{n+\frac{1}{2}} + \frac{\Delta t}{2}s) \, \text{d}s,
\end{equation*}
we obtain the bound 
\begin{align}\label{unodos}
\nonumber
\max_n \norm*{(\dot\bgam, \dot\bze )(t_{n+\frac{1}{2}}) - \partial_t (\bgam_h^n, \bze_h^n)}_{0,\Omega} &\lesssim \max_n \norm*{ \partial_t(\bgam, \bze )(t_{n}) - \partial_t (\bgam_h^n, \bze_h^n) }_{0,\Omega} 
\\
&+ (\Delta t)^2 \max_n \norm*{ (\bgam, \bze )}_{W^{3,\infty}(L^2(\Omega, \bbM\times \bbM))},
\end{align}
and the desired result follows by combining \eqref{quatre}, \eqref{uno} and \eqref{unodos}.  
\end{proof}

\begin{corollary}\label{dot}
Let $(\bgam, \bze)$ and $\{ (\bgam^n_h, \bze^n_h),\ n= 0,\ldots, L\}$ be the solutions of \eqref{varFormR1-varFormR2} and \eqref{fullyDiscretePb1-Pb2}, respectively. If Assumptions~\ref{assumption3} and \ref{assumption4} hold true and if $\bgam$, $\bze$ $\in  W^{2,\infty}(H^r(\cup_j\Omega_j,\bbM))$ and $\bgam +\bze \in W^{1,\infty}(   H^{r+1}(\cup_j\Omega_j,\mathbb R^d))$, with $r\geq 1$, then we have    
\begin{align}\label{fine}
\nonumber
\max_n &\norm*{ (\dot\bgam, \dot\bze )(t_{n+\frac{1}{2}}) - \partial_t (\bgam_h^n, \bze_h^n)}_{0,\Omega}+
 \max_n
\norm*{ \bdiv  (\bgam + \bze)(t_{n+\frac{1}{2}}) - \bdiv_h (\bgam_h^{n+\frac{1}{2}} + \bze_h^{n+\frac{1}{2}})}_{0,\Omega} 
\\
&
+ \max_n \norm*{h_\cF^{-\frac{1}{2}} \jump{ (\bgam + \bze)(t_{n+\frac{1}{2}}) -  (\bgam_h^{n+\frac{1}{2}} + \bze_h^{n+\frac{1}{2}}) } }_{0,\cF_h}
\nonumber\\
&\leq C  	h^{\min\{r,k\}} \sum_{j=1}^J\Big( \norm*{(\bgam , \bze)}_{W^{2,\infty}(H^r(\Omega_j,\bbM\times \bbM))} + \norm*{\bgam + \bze}_{W^{1,\infty}(   H^{r+1}( \Omega_j,\mathbb R^d))} \Big)
 \nonumber \\
  &\quad + C (\Delta t)^2 \Big( \norm*{\bgam}_{W^{4,\infty}(L^2(\Omega, \bbM))} + \norm*{\bze}_{W^{4,\infty}(L^2(\Omega, \bbM))} + \sum_{j=1}^J \norm*{\bgam + \bze}_{W^{3,\infty}(H^1(\Omega_j, \bbM))}\Big),
\end{align}
for all $\texttt{a}\geq \texttt{a}_0$, with $C>0$ independent of $h$ and $\Delta t$.
\end{corollary}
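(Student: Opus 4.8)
The plan is to start from the error bound \eqref{fullConv1} established in the preceding theorem, whose right-hand side already separates into two groups: four purely \emph{spatial} interpolation quantities, namely $\norm*{(I-\Pi_h)\bgam}_{W^{2,\infty}(L^2(\Omega,\bbM))}$, $\norm*{(I-\Pi_h)\bze}_{W^{2,\infty}(L^2(\Omega,\bbM))}$, $\norm*{\bdiv(I-\Pi_h)(\bgam+\bze)}_{W^{1,\infty}(L^2(\Omega,\R^d))}$ and $\norm*{h_\cF^{1/2}\mean{\bdiv(I-\Pi_h)(\bgam+\bze)}}_{W^{1,\infty}(L^2(\cF^*_h,\R^d))}$, together with the explicit temporal term of order $(\Delta t)^2$. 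The temporal term is already in the final form claimed in \eqref{fine} and only requires the regularity furnished by Assumption~\ref{assumption4}, so nothing further is needed there. It thus remains to convert the four spatial quantities into the asymptotic rate $h^{\min\{r,k\}}$.

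The key observation is that $\Pi_h$ is a fixed linear operator acting only on the spatial variable, hence it commutes with differentiation in time: $(I-\Pi_h)\tfrac{\mathrm d^j}{\mathrm dt^j}\bgam = \tfrac{\mathrm d^j}{\mathrm dt^j}(I-\Pi_h)\bgam$, and likewise for $\bze$ and for $\bgam+\bze$. This lets me evaluate each $W^{m,\infty}$-in-time norm by applying the spatial approximation estimates of Lemma~\ref{maintool} (equivalently \eqref{scott0}, \eqref{scott0b}, \eqref{scott0c}) to each temporal derivative separately and then taking the supremum over $[0,T]$. Concretely, for the first two terms I would apply the $L^2$-part of \eqref{scott0} at each fixed $t$ to $\tfrac{\mathrm d^j}{\mathrm dt^j}\bgam$ and $\tfrac{\mathrm d^j}{\mathrm dt^j}\bze$ for $j=0,1,2$; summing over elements and over $j$ and using the hypothesis $\bgam,\bze\in W^{2,\infty}(H^r(\cup_j\Omega_j,\bbM))$ yields a bound by $h^{\min\{r,k+1\}}\sum_j\big(\norm*{\bgam}_{W^{2,\infty}(H^r(\Omega_j))}+\norm*{\bze}_{W^{2,\infty}(H^r(\Omega_j))}\big)$, which is absorbed into $h^{\min\{r,k\}}$ since $\min\{r,k+1\}\ge\min\{r,k\}$. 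For the third and fourth terms I would instead invoke the divergence and average-of-divergence estimates underlying Lemma~\ref{maintool}, namely \eqref{scott0b} and \eqref{scott0c}, applied to $\tfrac{\mathrm d^j}{\mathrm dt^j}(\bgam+\bze)$ for $j=0,1$; the hypothesis $\bgam+\bze\in W^{1,\infty}(H^{r+1}(\cup_j\Omega_j,\R^d))$ then produces the bound $h^{\min\{r,k\}}\sum_j\norm*{\bgam+\bze}_{W^{1,\infty}(H^{r+1}(\Omega_j))}$.

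Collecting the four spatial bounds together with the unaltered $(\Delta t)^2$ term and substituting into \eqref{fullConv1} gives exactly \eqref{fine}, valid for all $\texttt{a}\geq\texttt{a}_0$ with a constant $C$ independent of $h$ and $\Delta t$. There is no genuine analytical obstacle here: the estimate is a direct specialisation of the abstract theorem, in complete analogy with the corollary of the semi-discrete section. The only point demanding care is bookkeeping---pairing each temporal derivative order with the correct spatial approximation estimate (the $L^2$ estimate for the $W^{2,\infty}$ terms, the divergence and trace estimates for the $W^{1,\infty}$ terms), and confirming that the regularity assumed in the statement is precisely what is consumed when $\Pi_h$ is pushed through the time derivatives.
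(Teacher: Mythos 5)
Your proposal is correct and takes essentially the same route as the paper, whose entire proof is the one-line remark that the interpolation estimates \eqref{tool} of Lemma~\ref{maintool} applied to the right-hand side of \eqref{fullConv1} yield \eqref{fine}. Your argument---commuting $\Pi_h$ with time derivatives and applying \eqref{scott0} and \eqref{scott0b} to each temporal derivative before taking suprema in time---is exactly the bookkeeping implicit in that citation.
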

\begin{proof}
The result follows by using the error estimates \eqref{tool} in \eqref{fullConv1}.
\end{proof}

\begin{remark}\label{rem:equiv-norm}
	We notice that 
\begin{align}\label{truco}
\nonumber 
\Big( (\bgam, \bze)(t_{k+\frac{1}{2}}) &- (\bgam_h^{k+\frac{1}{2}}, \bze_h^{k+\frac{1}{2}}) \Big) - \Big((\bgam, \bze)(t_{k-\frac{1}{2}}) - (\bgam_h^{k-\frac{1}{2}}, \bze_h^{k-\frac{1}{2}}) \Big) 
\\
&= (\bgam, \bze)(t_{k+\frac{1}{2}})
- (\bgam, \bze)(t_{k-\frac{1}{2}})
- \frac{\Delta t}{2}((\dot\bgam, \dot\bze)(t_{k+\frac{1}{2}}) + (\dot\bgam, \dot\bze)(t_{k-\frac{1}{2}}))  
\nonumber\\
&\qquad + \frac{\Delta t}{2} \Big( (\dot\bgam, \dot\bze)(t_{k+\frac{1}{2}}) - \partial_t (\bgam_h^{k}, \bze_h^{k})  +  (\dot\bgam, \dot\bze)(t_{k-\frac{1}{2}}) - \partial_t (\bgam_h^{k-1}, \bze_h^{k-1})  \Big).
\end{align}
Then, using a Taylor expansion centered at $t = t_{k}$, we find that
\begin{align}\label{Tay1}
\begin{split}
(\bgam, \bze)(t_{k+\frac{1}{2}})
- (\bgam, \bze)(t_{k-\frac{1}{2}}) &- \frac{\Delta t}{2}((\dot\bgam, \dot\bze)(t_{k+\frac{1}{2}}) + (\dot\bgam, \dot\bze)(t_{k-\frac{1}{2}})) =
 \\[0.25ex]
 &\qquad \frac{(\Delta t)^3}{16} \int_{-1}^{1} \frac{\textup{d}^3(\bgam, \bze)}{\textup{d}t^3}(t_k +\frac{\Delta t}{2}s ) (s^2 - 1)\, \textup{d}s.
\end{split}
\end{align}
In this way, substituting \eqref{Tay1} in \eqref{truco}, 
and summing  up the resulting  identity  over $k= 1,\ldots, n$, we deduce
that  
\begin{align*}
	\max_{n}\norm*{(\bgam, \bze)(t_{n+\frac{1}{2}}) - (\bgam_h^{n+\frac{1}{2}}, \bze_h^{n+\frac{1}{2}})}_{0,\Omega} 
\lesssim (\Delta t)^2 &\Big( \norm*{(\bgam, \bze)}_{W^{3,\infty}(\mathbb{L}^2(\Omega,\bbM\times \bbM))}
\\[0.25ex]
&
+ \max_n \norm*{ (\dot\bgam, \dot\bze )(t_{n+\frac{1}{2}}) - \partial_t (\bgam_h^n, \bze_h^n)}_{0,\Omega} \Big).
\end{align*}
Combining the last identity with \eqref{fine} we deduce that,  under the conditions of Corollary~\ref{dot}, we achieve the following asymptotic error estimate in the energy norm 
\begin{equation}\label{eq:energy-norm}
  \max_{n}\norm*{(\bgam, \bze)(t_{n+\frac{1}{2}}) - (\bgam_h^{n+\frac{1}{2}}, \bze_h^{n+\frac{1}{2}})}_{\mathcal{H}^+_{\text{sym}}(h)} \lesssim h^{\min\{r,k\}} + (\Delta t)^2.  
\end{equation}
 
\end{remark}

\section{Numerical results}\label{sec:results} 
We now present a number of computational tests in 2D and 3D, which have been implemented using the finite element library \texttt{FEniCS} \cite{alnaes15}, and we mention that, due to the use of symmetric spaces, special care is required when manipulating (interpolating, reshaping, projecting locally, etc.) UFL forms and expressions, functions over tensorial finite element spaces, and when updating of arrays in the time advancing algorithm. The stabilisation constant is taken as $\texttt{a} = \texttt{a}^* k^2$, where $k\geq 1$ is the polynomial degree and where $\texttt{a}^*$ is specified in each example. 

\medskip
\noindent\textbf{Example 1: Accuracy verification.}
 In order to investigate numerically the error decay predicted by Corollary \ref{dot} and Remark \ref{rem:equiv-norm}, we proceed to compare approximate and closed-form exact solutions for various levels of spatio-temporal refinement. Let us consider the unit square domain $\Omega = (0,1)^2$ and the following parameter-dependent smooth displacement 
\[
\bu(x,y,t) :=2\exp(-t) \begin{pmatrix}
\cos(\pi x)\sin(\pi y) + \frac{\displaystyle x^2}{\displaystyle \lambda_{\mathcal{C}} + \lambda_{\mathcal{D}}} \\
-\sin(\pi x)\cos(\pi y) + \frac{\displaystyle y^2}{\displaystyle \lambda_{\mathcal{C}} + \lambda_{\mathcal{D}}} \end{pmatrix},
\]
where the parameters come from the constitutive equations characterised by the elastic and viscous stress fourth-order elasticity tensors, here simply assumed as Hooke's law
\begin{equation}\label{eq:hooke} 
\mathcal{C} \btau = 2\mu_{\mathcal{C}}\btau + \lambda_{\mathcal{C}}\text{tr} \btau I, \quad 
  \mathcal{D} \btau = 2\mu_{\mathcal{D}}\btau + \lambda_{\mathcal{D}}\text{tr} \btau I.\end{equation}
The exact displacement is used to construct exact elastic and viscous stresses as well as appropriate initial conditions \eqref{init1} and non-homogeneous boundary conditions as in, e.g., \eqref{eq:bc-nonhomo} (for these first tests of convergence, we only consider them as of displacement type).  The remaining (adimensional) model and numerical parameters are chosen as $\omega = 0.01$, $\rho = 1$, $\texttt{a}^* = 5$.


\begin{table}[t]
	\centering
{\small	\begin{tabular}{||crc|gg|cccccccc||}
	\toprule 
$k$ &{\tt DoF}& $h$ & ${\tt E}^{n+\frac12}(\bsig)$ &  {\tt rate} & ${\tt e}^{n+\frac12}_0(\bsig)$ &  {\tt rate} 
& ${\tt e}^{n+\frac12}_{\bdiv_h}(\bsig)$ & {\tt rate} 
& ${\tt e}^{n+\frac12}_{\text{jump}}(\bsig)$ & {\tt rate} 
& ${\tt e}^{n+\frac12}_0(\bu)$ & {\tt rate}  \\
\midrule
\multirow{6}{*}{1}&
   144 & 0.707 & 3.59e+0 & * & 2.18e-01 & * & 2.48e+0 & * & 8.94e-01 & * & 3.58e-01 & * \\
 &  576 & 0.354 & 1.84e+0 & 0.964 & 5.81e-02 & 1.908 & 1.32e+0 & 0.915 & 4.68e-01 & 0.933 & 1.84e-01 & 0.963\\
 & 2'304 & 0.177 & 9.17e-01 & 1.006 & 1.47e-02 & 1.977 & 6.68e-01 & 0.978 & 2.34e-01 & 0.998 & 9.24e-02 & 0.991\\
 & 9'216 & 0.088 & 4.56e-01 & 1.006 & 3.70e-03 & 1.994 & 3.35e-01 & 0.995 & 1.18e-01 & 0.995 & 4.63e-02 & 0.998\\
 &36'864 & 0.044 & 2.28e-01 & 1.003 & 9.26e-04 & 1.999 & 1.68e-01 & 0.999 & 5.90e-02 & 0.995 & 2.32e-02 & 0.999\\
&147'456 & 0.022 & 1.14e-01 & 1.002 & 2.32e-04 & 2.000 & 8.39e-02 & 1.000 & 2.96e-02 & 0.996 & 1.16e-02 & 1.000\\
\midrule
	\multirow{6}{*}{2} &
   288 & 0.707 & 1.13e+0 & * & 4.85e-02 & * & 8.68e-01 & * & 2.10e-01 & * & 2.94e-01 & *\\
&  1'152 & 0.354 & 3.07e-01 & 1.875 & 6.45e-03 & 2.912 & 2.31e-01 & 1.910 & 6.95e-02 & 1.593 & 8.49e-02 & 1.794\\
&  4'608 & 0.177 & 7.87e-02 & 1.963 & 8.18e-04 & 2.978 & 5.87e-02 & 1.976 & 1.92e-02 & 1.856 & 2.20e-02 & 1.949\\
& 18'432 & 0.088 & 1.98e-02 & 1.990 & 1.03e-04 & 2.994 & 1.47e-02 & 1.994 & 4.98e-03 & 1.946 & 5.55e-03 & 1.987\\
& 73'728 & 0.044 & 4.96e-03 & 1.997 & 1.29e-05 & 2.989 & 3.69e-03 & 1.999 & 1.26e-03 & 1.978 & 1.39e-03 & 1.996\\
&294'912 & 0.022 & 1.24e-03 & 2.000 & 2.19e-06 & 2.560 & 9.21e-04 & 2.001 & 3.18e-04 & 1.990 & 3.48e-04 & 1.997\\
\midrule  
	\multirow{6}{*}{3} &
   480 & 0.707 & 2.69e-01 & * & 9.54e-03 & * & 2.20e-01 & * & 4.08e-02 & * & 4.47e-02 &* \\
&  1'920 & 0.354 & 3.60e-02 & 2.905 & 1.64e-03 & 3.918 & 2.93e-02 & 2.909 & 6.11e-03 & 2.740 & 6.06e-03 & 2.884\\
&  7'680 & 0.177 & 4.57e-03 & 2.977 & 3.58e-04 & 3.978 & 3.72e-03 & 2.977 & 8.11e-04 & 2.912 & 7.74e-04 & 2.971\\
& 30'720 & 0.088 & 5.74e-04 & 2.993 & 5.69e-05 & 3.731 & 4.67e-04 & 2.995 & 1.04e-04 & 2.960 & 9.72e-05 & 2.992\\
&122'880 & 0.044 & 7.32e-05 & 2.969 & 8.50e-06 & 2.945 & 5.85e-05 & 2.995 & 1.32e-05 & 2.981 & 1.22e-05 & 2.994\\
&491'520 & 0.022 & 1.01e-05 & 2.833 & 1.19e-06 & 2.821 & 8.65e-06 & 2.759 & 1.66e-06 & 2.990 & 1.59e-06 & 2.985\\
\bottomrule
	\end{tabular}}
	\caption{Example 1. Error history associated with the space discretisation for polynomial degrees $k=1,2,3$,  obtained for a fixed $\Delta t=10^{-6}$, going up to $T = 5\Delta t$,  and setting the parameters $E_{\mathcal{C} }=1,\nu_{\mathcal{C} }=0.25$, $E_{\mathcal{D} }=10,\nu_{\mathcal{D} }=0.4$, leading to the Lam\'e constants 
	$\mu_{\mathcal{C} } = \lambda_{\mathcal{C} } = 0.4$,  $\mu_{\mathcal{D} } = 3.5714$, $\lambda_{\mathcal{C} } =  14.2857$. The error decay in the energy norm, predicted by \eqref{eq:energy-norm}, and its associated convergence rates are highlighted. }\label{table:h}
\end{table} 

For tables and figures presenting accuracy verification, we use the following notation for the norms in Corollary \ref{dot}, as well as in Remark \ref{rem:equiv-norm}, and denoting separately the $L^2-$, discrete divergence, and jump contributions 
\begin{align*}
{\tt e}^{n+\frac12}_0(\bsig) &:= \max_{n}\norm*{(\bgam, \bze)(t_{n+\frac{1}{2}}) - (\bgam_h^{n+\frac{1}{2}}, \bze_h^{n+\frac{1}{2}})}_{0,\Omega}, \\
 {\tt e}^{n+\frac12}_{\bdiv_h}(\bsig) & : =  \max_n \norm*{ \bdiv  (\bgam + \bze)(t_{n+\frac{1}{2}}) - \bdiv_h (\bgam_h^{n+\frac{1}{2}} + \bze_h^{n+\frac{1}{2}})}_{0,\Omega},\\ 
{\tt e}^{n+\frac12}_{\text{jump}}(\bsig) &: =  \max_n \norm*{h_\cF^{-\frac{1}{2}} \jump{ (\bgam + \bze)(t_{n+\frac{1}{2}}) -  (\bgam_h^{n+\frac{1}{2}} + \bze_h^{n+\frac{1}{2}}) } }_{0,\cF_h},\\
{\tt E}^{n+\frac12}(\bsig) &: =   \max_{n}\norm*{(\bgam, \bze)(t_{n+\frac{1}{2}}) - (\bgam_h^{n+\frac{1}{2}}, \bze_h^{n+\frac{1}{2}})}_{\mathcal{H}^+_{\text{sym}}(h)} , \qquad {\tt e}^{n+\frac12}_0(\bu)  := \max_{n}\norm*{\bu(t_{n+\frac{1}{2}})  - \bu^{n+\frac12}_h}_{0,\Omega},
\end{align*}
where  the approximate displacements are postprocessed using the fully-discrete form of the momentum balance equation and applying a classical finite difference quadrature  
\[ \bu_h^{n+\frac12} = 2\bu_h^{n-\frac12}-\bu_h^{n-\frac32} +\frac{(\Delta t)^2}{\rho}\biggl[\bF+\bdiv_h(\bgam_h^{n+\frac12}+\bze_h^{n+\frac12})\biggr].\]

First we assess the convergence with respect to the space discretisation. As usual, we take a fixed $\Delta t$  (sufficiently small not to compromise the spatial accuracy), run the simulation over a short time horizon (here, of five time steps), and consider a sequence of six successively refined uniform meshes. The rates of convergence in space are computed as 
\[{\tt rate}  =\log(e_{(\cdot)}/\tilde{e}_{(\cdot)})[\log(h/\tilde{h})]^{-1}, \]
where $e,\tilde{e}$ denote errors generated on two consecutive  meshes of sizes $h$ and~$\tilde{h}$, respectively. We test with three different polynomial degrees.  Table~\ref{table:h} presents errors against the number of degrees of freedom (DoF). We can observe that the sum of elastic and viscous stresses and the postprocessed displacement converge to the corresponding exact fields approaching an optimal rate of $O(h^{k})$.   Note that, for the case $k=3$, the convergence of the first contribution to the stress error is slightly affected for the finest level as the error approaches the chosen value for the time step $\Delta t = 10^{-6}$.  
 The convergence has been assessed using mild parameters for Hooke's constitutive laws of elastic and viscous stresses. Varying these parameters does not seem to affect the convergence order of the method. We consider three other parameter sets  (Young's modulus and Poisson ratio) 
 \begin{gather*}
 E_{\mathcal{C}} \in \{10,100,1'000\}, \quad \nu_{\mathcal{C}} \in \{0.33, 0.45, 0.499\},\quad 
 E_{\mathcal{D}} \in \{100,1'000,10'000\}, \quad \nu_{\mathcal{D}} \in \{0.4, 0.475, 0.4999\},
 \end{gather*}
 which give Lam\'e constants 
 $\lambda_{(\cdot)} = \frac{E_{(\cdot)} \nu_{(\cdot)}}{(1+\nu_{(\cdot)})(1-2\nu_{(\cdot)})}$ and $\mu_{(\cdot)} = \frac{E_{(\cdot)}}{2(1+\nu_{(\cdot)})}$. In view of \eqref{eq:hooke}, we recall that the dissipativity condition requires that $\mu_{\mathcal{D} } >\mu_{\mathcal{C} }$ and $\lambda_{\mathcal{D} } >\lambda_{\mathcal{C} }$. 
  The error decay for these three cases is collected in the top panels of Figure~\ref{fig:convergence}, where we only display the energy error and that of the postprocessed displacement.  These results demonstrate the ability of the proposed family of numerical schemes to produce accurate approximations also in the regime where $\lambda_{\mathcal{D} } \approx$1.6e6 (nearly incompressible viscoelasticity). For sake of illustration we also present the obtained approximate stress components and the postprocessed displacement for one of these additional tests.

\begin{figure}[t!]
\begin{center}
\includegraphics[width=0.325\textwidth]{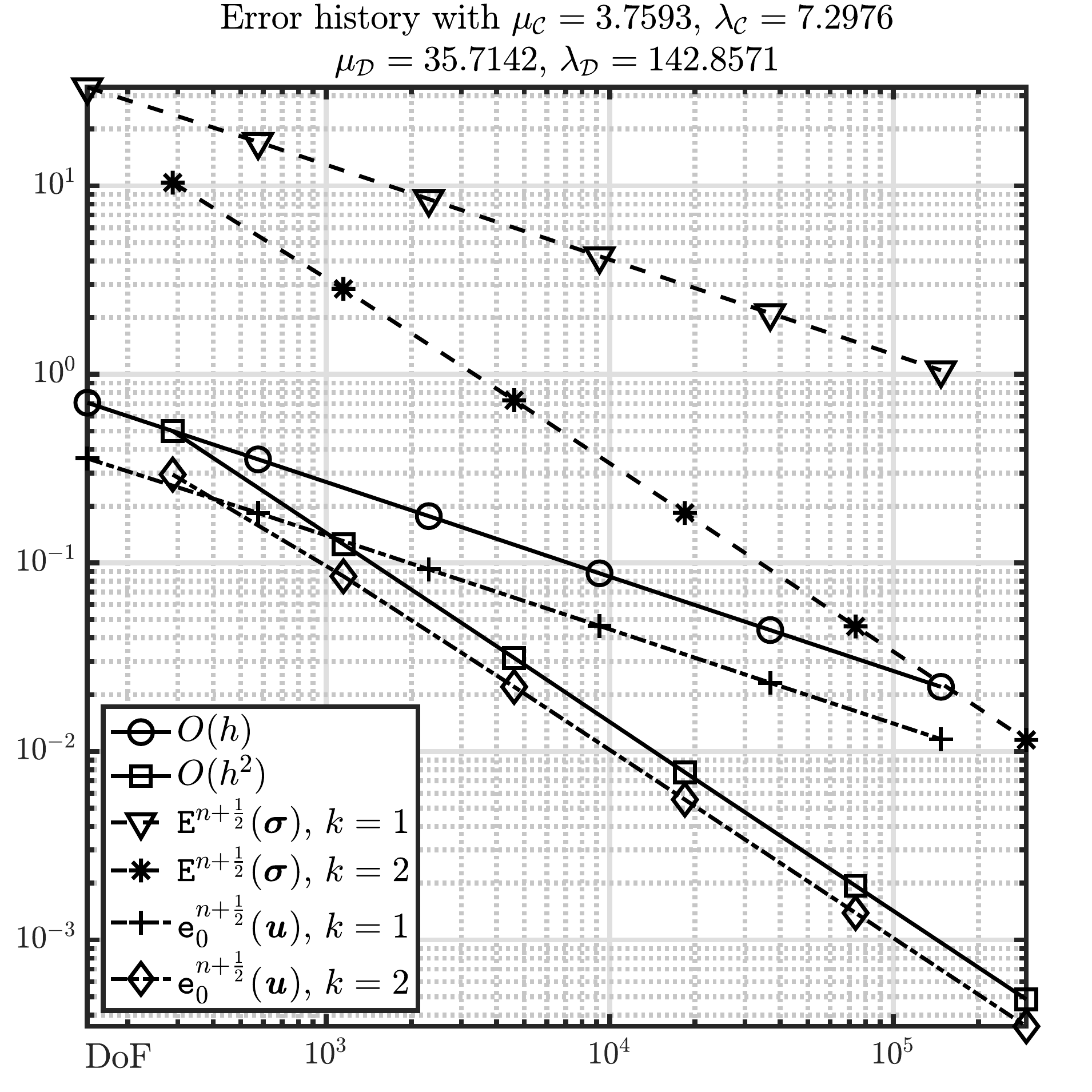}
\includegraphics[width=0.325\textwidth]{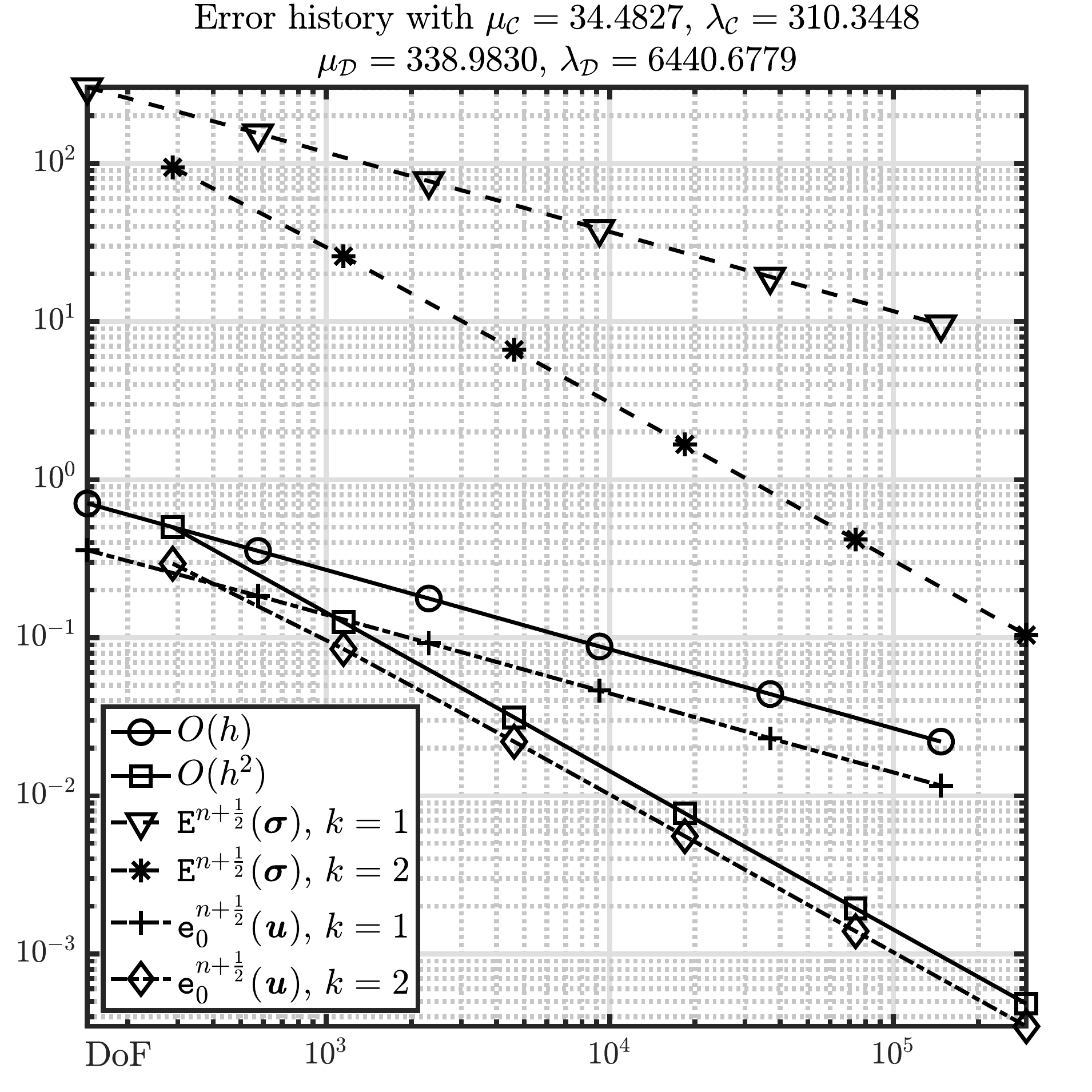}
\includegraphics[width=0.325\textwidth]{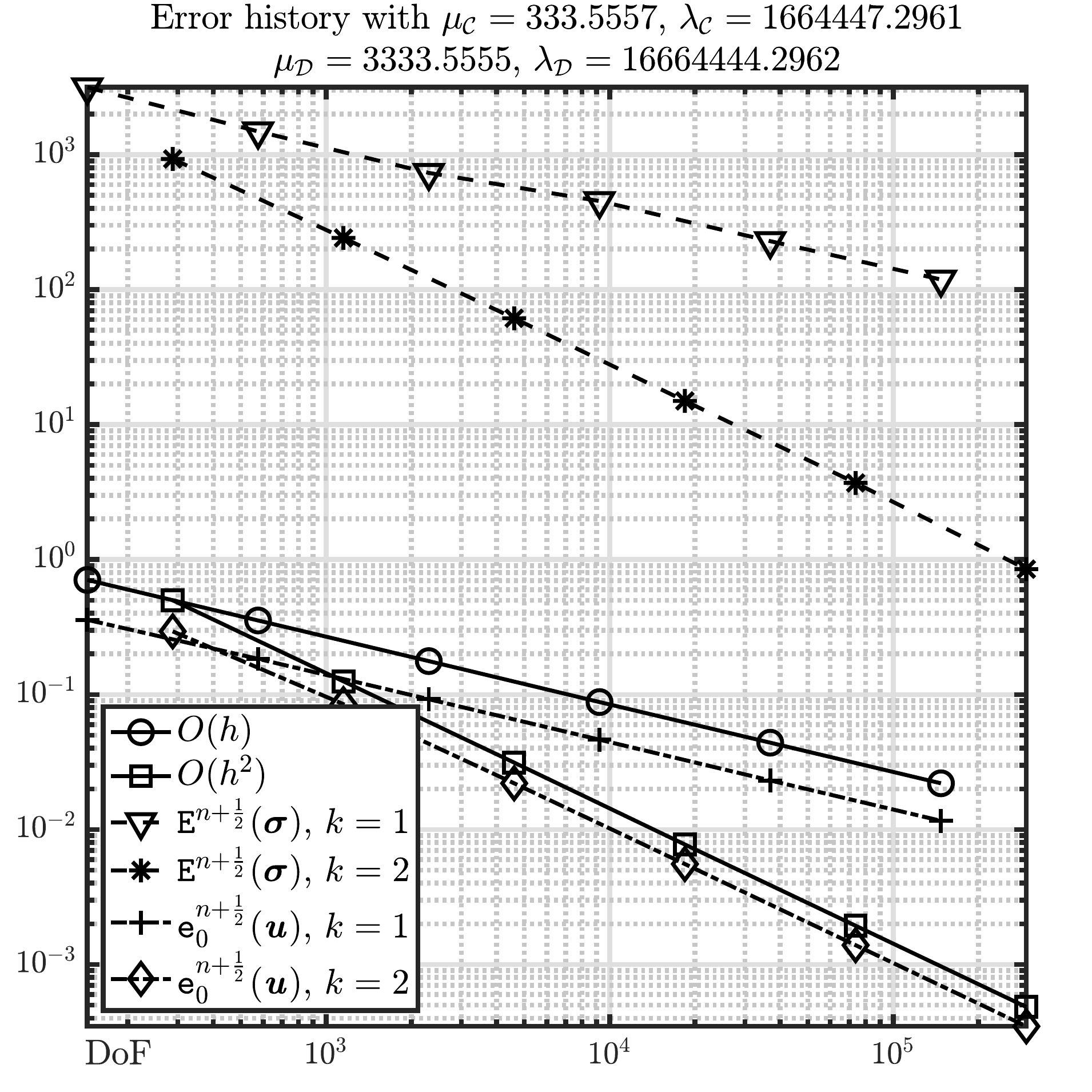}\\[1ex]
\includegraphics[width=0.245\textwidth]{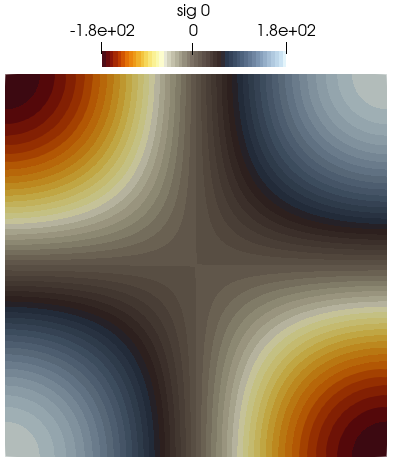}
\includegraphics[width=0.245\textwidth]{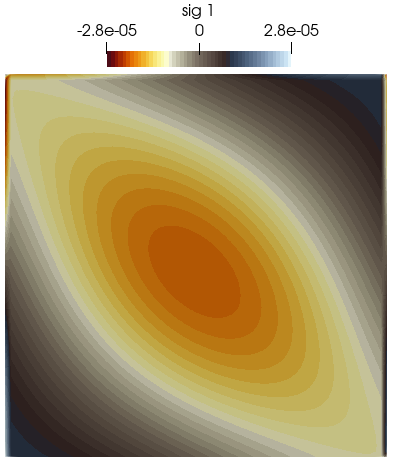}
\includegraphics[width=0.245\textwidth]{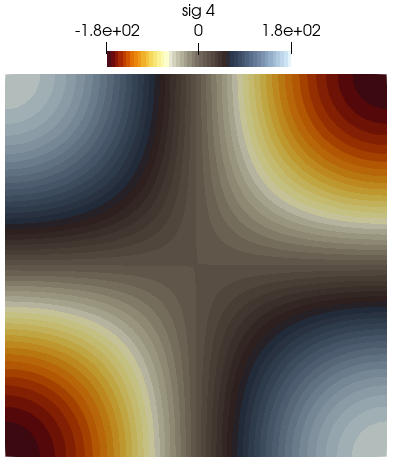}
\includegraphics[width=0.245\textwidth]{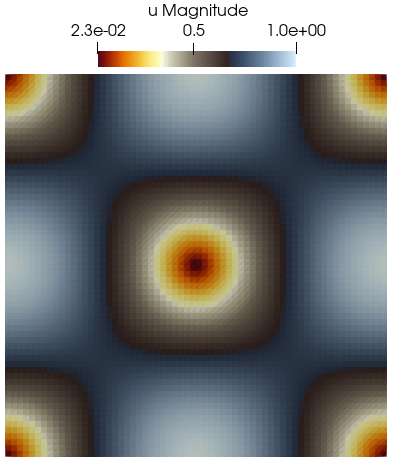}
\end{center}

\vspace{-4mm}
\caption{Example 1. Error history with respect to the space discretisation with polynomial degrees $k=1,2$ and varying the parameter space (top), and sample of approximate stress components ($xx$, $xy=yx$ and $yy$) and postprocessed displacement magnitude  at $t = 5\Delta t$ for the second parameter set  obtained with $k=1$ (bottom row).}\label{fig:convergence}
\end{figure}

\begin{table}[t]
	\centering
{\small	\begin{tabular}{||cc|gg|cccccccc||}
	\toprule 
$k$ &$\Delta t $ & ${\tt E}^{n+1/2}(\bsig)$ &  $\widehat{\tt rate}$  & ${\tt e}^{n+1/2}_0(\bsig)$ &  $\widehat{\tt rate}$ 
& ${\tt e}^{n+1/2}_{\bdiv_h}(\bsig)$ & $\widehat{\tt rate}$ 
& ${\tt e}^{n+1/2}_{\text{jump}}(\bsig)$ & $\widehat{\tt rate}$ 
& ${\tt e}^{n+1/2}_0(\bu)$ & $\widehat{\tt rate}$   \\
\midrule
\multirow{6}{*}{1}&
0.500000 & 6.19e+01 & *      & 1.51e+01 & *     & 4.69e+01 & *     & 5.44e-02 & *     & 2.72e+01 & * \\
&0.250000 & 1.41e+01 & 2.138 & 3.37e+00 & 2.158 & 1.07e+01 & 2.135 & 3.35e-02 & 1.216 & 5.39e+00 & 2.337\\
&0.125000 & 2.97e+00 & 2.246 & 6.73e-01 & 2.325 & 2.27e+00 & 2.230 & 1.95e-02 & 0.782 & 1.16e+00 & 2.217\\
&0.062500 & 7.84e-01 & 1.920 & 2.01e-01 & 1.811 & 5.67e-01 & 2.003 & 1.08e-02 & 0.855 & 2.69e-01 & 2.105\\
&0.031250 & 1.91e-01 & 2.039 & 4.92e-02 & 2.063 & 1.38e-01 & 2.041 & 3.26e-03 & 1.757 & 6.49e-02 & 2.053\\
&0.015625 & 4.84e-02 & 1.978 & 1.24e-02 & 1.993 & 3.49e-02 & 1.982 & 8.16e-04 & 1.958 & 1.60e-02 & 2.024\\
\midrule
	\multirow{6}{*}{2} & 
0.500000 & 6.19e+01 & *     & 1.51e+01 &    *  & 4.69e+01 & *     & 7.40e-03 &   *    & 2.72e+01 & * \\
&0.250000 & 1.40e+01 & 2.141 & 3.37e+00 & 2.158 & 1.07e+01 & 2.135 & 2.45e-03 & 1.724 & 5.39e+00 & 2.337\\
&0.125000 & 2.95e+00 & 2.252 & 6.73e-01 & 2.325 & 2.27e+00 & 2.230 & 1.62e-03 & 1.200 & 1.16e+00 & 2.217\\
&0.062500 & 7.74e-01 & 1.929 & 2.01e-01 & 1.811 & 5.67e-01 & 2.002 & 7.17e-04 & 0.953 & 2.69e-01 & 2.105\\
&0.031250 & 1.88e-01 & 2.044 & 4.92e-02 & 2.063 & 1.38e-01 & 2.040 & 2.33e-04 & 1.730 & 6.49e-02 & 2.053\\
&0.015625 & 4.75e-02 & 1.982 & 1.24e-02 & 1.993 & 3.50e-02 & 1.980 & 6.56e-05 & 1.972 & 1.60e-02 & 2.024\\
\bottomrule
	\end{tabular}}
	\caption{Example 1. Error history associated with the time discretisation, and obtained for a fixed mesh with $ h = 0.022$ and setting the parameters $E_{\mathcal{C} }=10,\nu_{\mathcal{C} }=0.4$, $E_{\mathcal{D} }=20,\nu_{\mathcal{D} }=0.45$, leading to the Lam\'e constants $\mu_{\mathcal{C} } = 3.5714$,  $\lambda_{\mathcal{C} } =14.2857$,  $\mu_{\mathcal{D} } = 6.8966$,  $\lambda_{\mathcal{D} } = 62.0689$.}\label{table:dt}
\end{table}

On the other hand, Table~\ref{table:dt} portrays the convergence results obtained after varying the time step discretising the time interval $[0,1]$. The rates of convergence in time, are computed as 
\[\widehat{\tt rate}  =\log(e_{(\cdot)}/\tilde{e}_{(\cdot)})[\log(\Delta t/\widetilde{\Delta t})]^{-1}, \]
where $e,\tilde{e}$ denote errors generated on two consecutive runs considering time steps $\Delta t$ and~$\widetilde{\Delta t}$, respectively. 
For this we choose a uniform mesh with  $ h = 0.022$ and consider the manufactured solutions
\[ \bu(x,y,t) = \exp(-t) \begin{pmatrix} 
xy  + \frac{\displaystyle x^2}{\displaystyle \lambda_{\mathcal{C}} + \lambda_{\mathcal{D}}} \\
xy  +  \frac{\displaystyle y^2}{\displaystyle \lambda_{\mathcal{C}} + \lambda_{\mathcal{D}}}\end{pmatrix},\]
together with the parameters $\texttt{a}^* =10$, $\omega =2$, $\rho = 1$, $\mu_{\mathcal{C} } = 3.5714$,  $\lambda_{\mathcal{C} } =14.2857$,  $\mu_{\mathcal{D} } = 6.8966$,  $\lambda_{\mathcal{D} } = 62.0689$. The expected convergence rate of $O([\Delta t]^2)$ is attained as the time step is refined. Similar parametric studies to those performed before (now shown here) have also confirmed  robustness with respect to other values in the parameter space.  

\begin{figure}[t!]
\begin{center}
\includegraphics[width=0.245\textwidth]{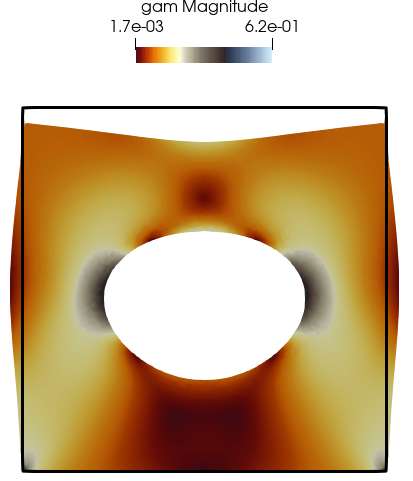}
\includegraphics[width=0.245\textwidth]{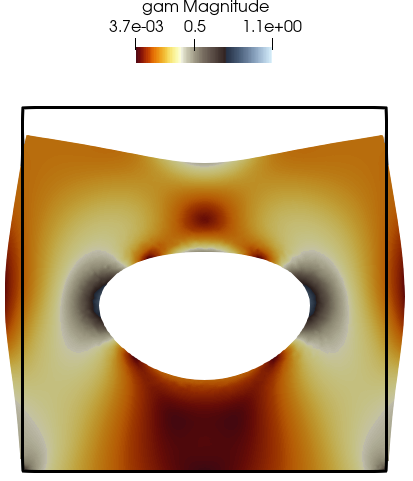}
\includegraphics[width=0.245\textwidth]{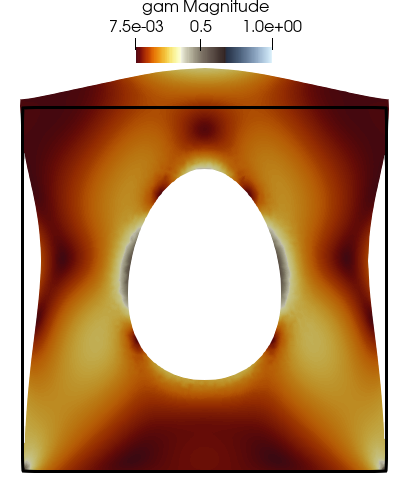}
\raisebox{-2mm}{\includegraphics[width=0.245\textwidth]{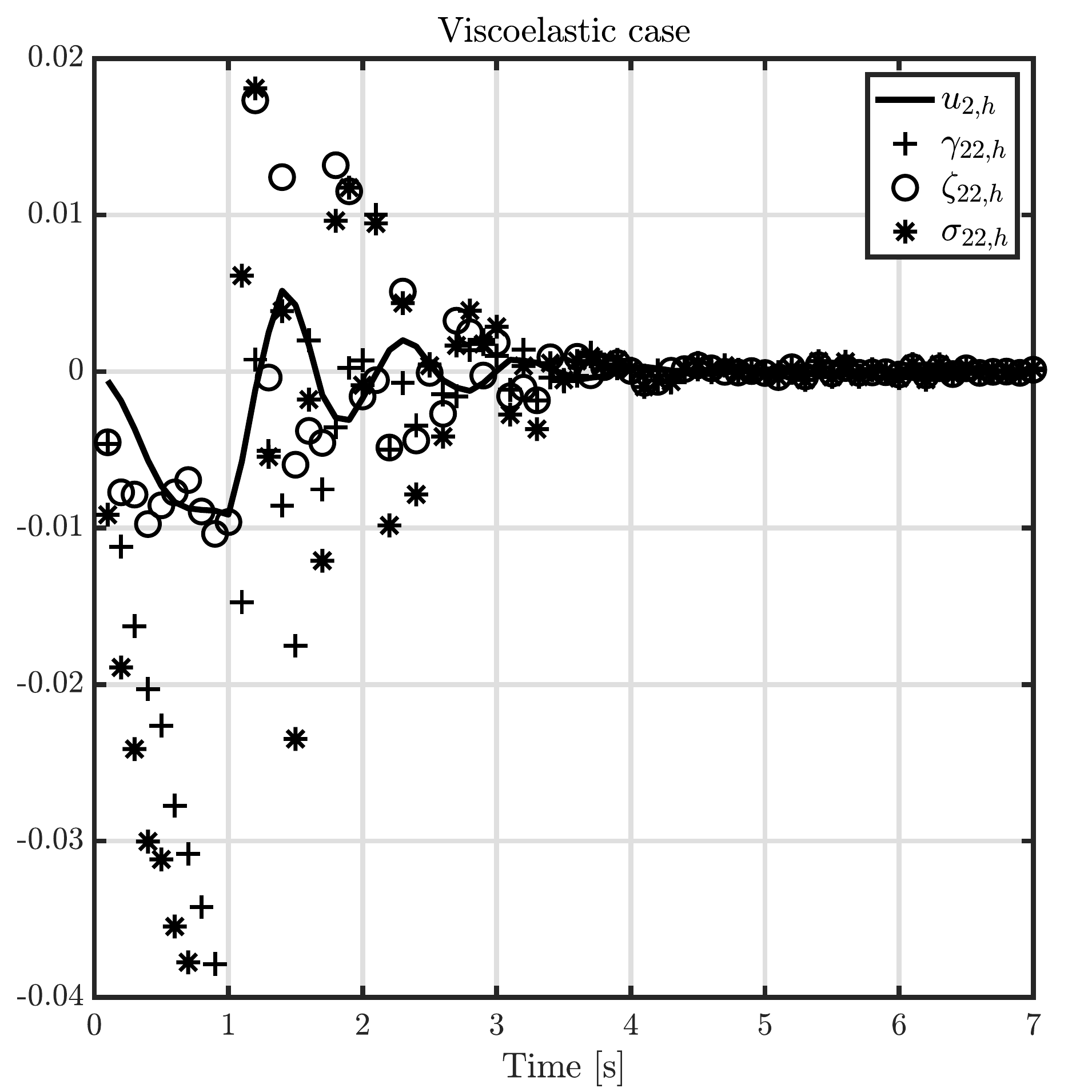}}\\
\includegraphics[width=0.245\textwidth]{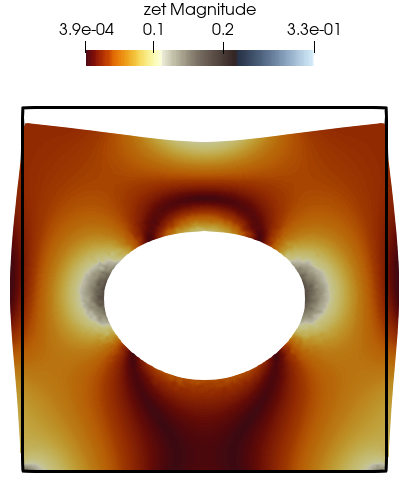}
\includegraphics[width=0.245\textwidth]{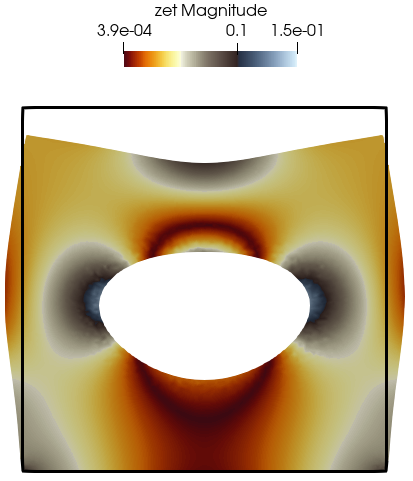}
\includegraphics[width=0.245\textwidth]{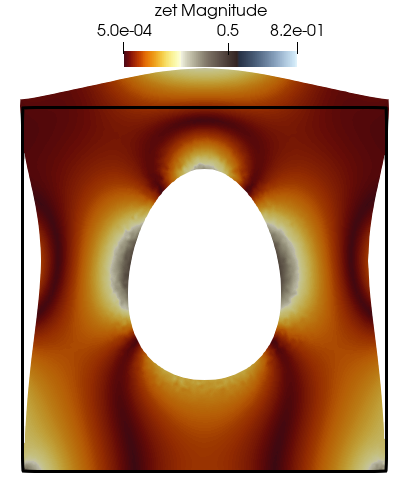}
\raisebox{-2mm}{\includegraphics[width=0.245\textwidth]{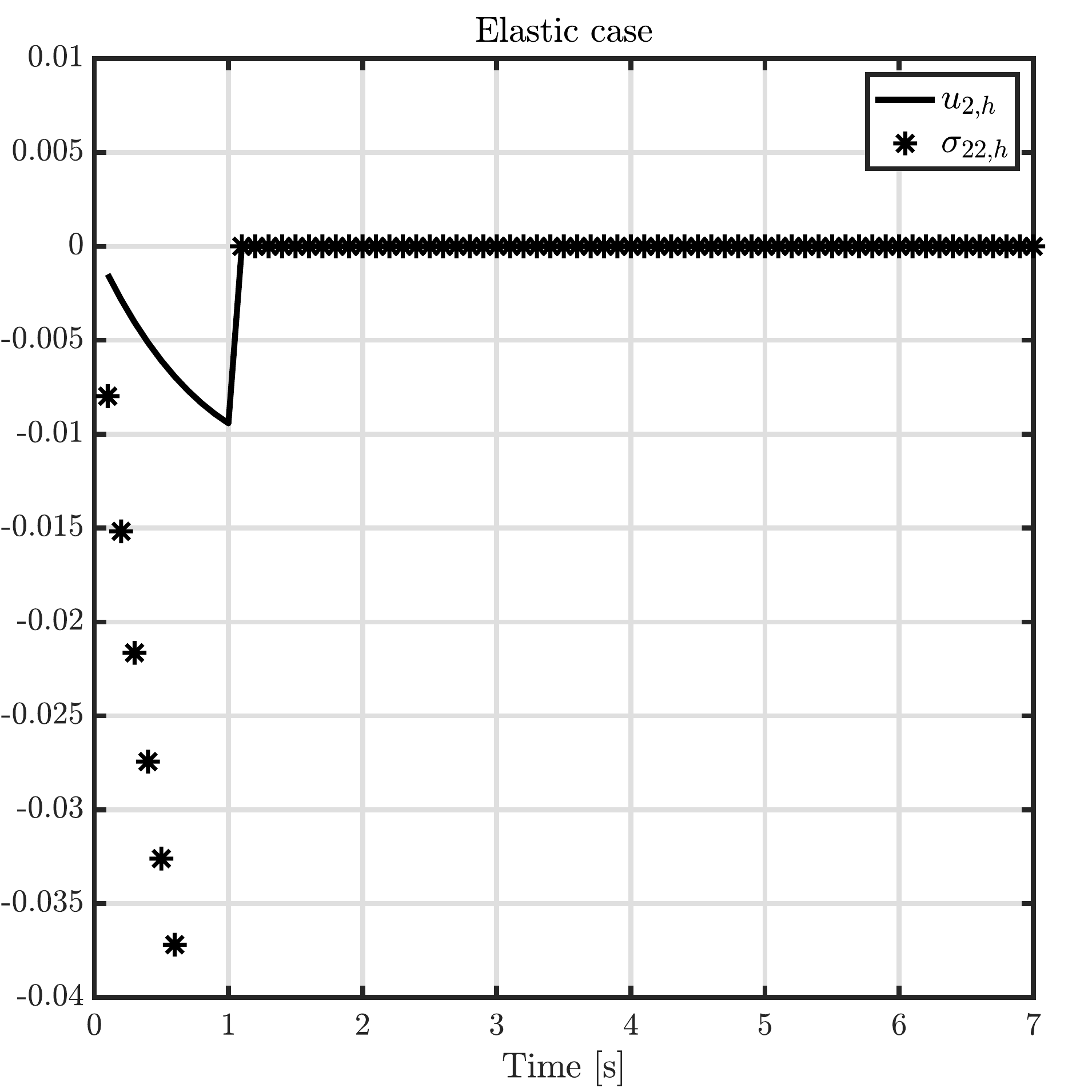}}\\
\includegraphics[width=0.245\textwidth]{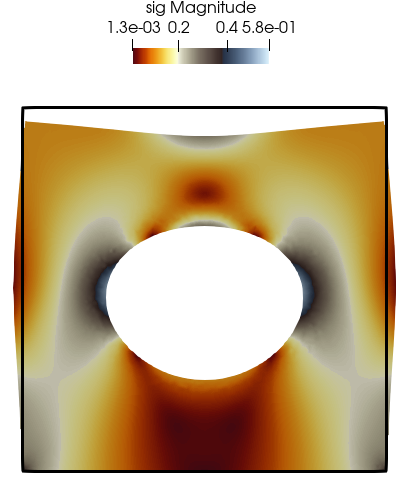}
\includegraphics[width=0.245\textwidth]{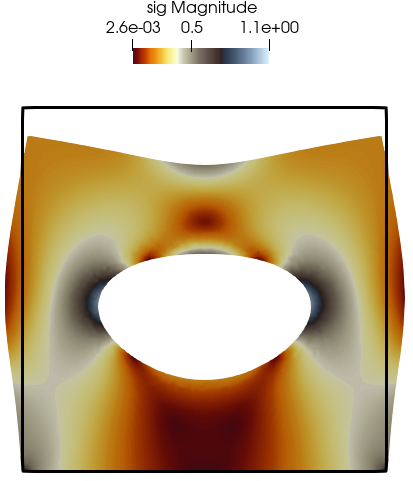}
\includegraphics[width=0.245\textwidth]{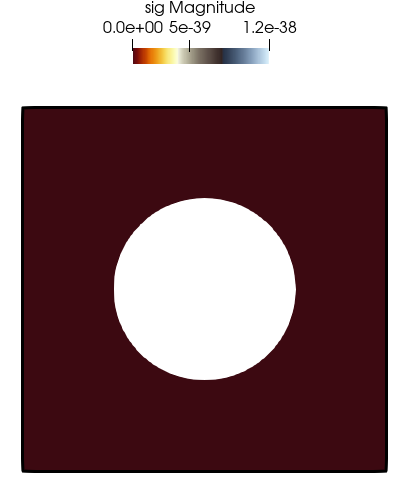}
\raisebox{-2mm}{\includegraphics[width=0.245\textwidth]{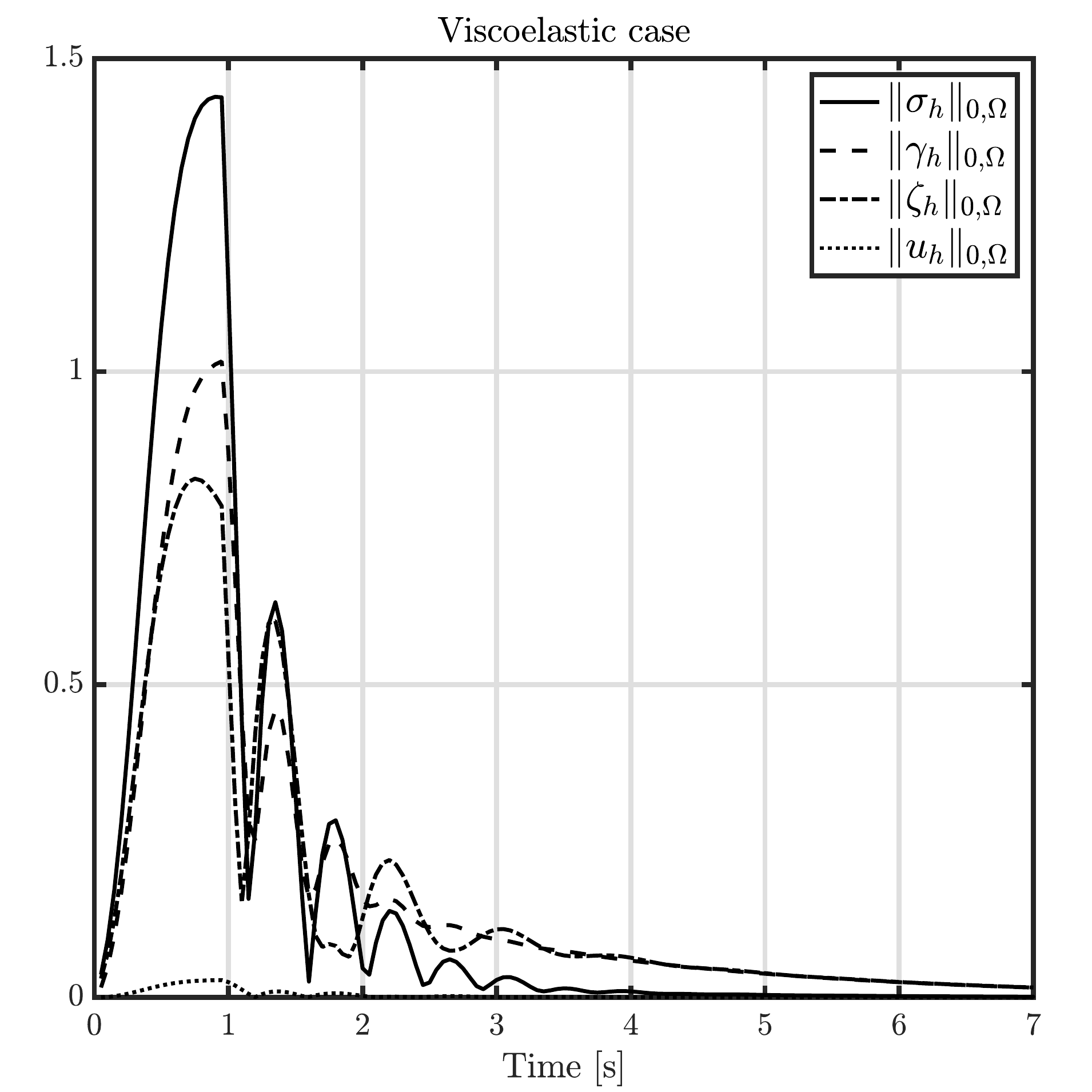}}
\end{center}

\vspace{-5mm}
\caption{Example 2. Snapshots at 10x displacement magnification taken at times  0.5\,s, 1\,s, 1.5\,s (left and two middle columns, respectively) of elastic and viscoelastic stresses for a viscoelastic material (top and middle row) and for an elastic plate (bottom row). Solutions obtained with a second order method. Left column plots: transients of vertical displacement and $yy-$components of elastic and viscous stresses at the point (0.5,0.82), as well as $L^2$-norms of all quantities.}\label{fig:plates}
\end{figure}

\medskip
\noindent\textbf{Example 2: Plane stress viscoelasticity in perforated plates.}
Our next example simulates the transient behaviour of perforated plates in viscoelastic vs purely elastic cases, focusing on plane stress conditions. We adapt the configuration proposed in \cite{giorla14} to Zener's rheological  model and take Hookean constitutive laws for the elastic spring and viscous dashpot stresses selecting the density $\rho =1$\,Kg/m$^3$,  characteristic time $\omega = 0.15$\,s, stabilisation parameter with $\texttt{a}^* = 10$, and   Young moduli and Poisson ratios:
 \[
 E_{\mathcal{C}} = 30\, \text{KPa}, \quad \nu_{\mathcal{C}} = 0.3,\quad 
 E_{\mathcal{D}}= 40\, \text{KPa}, \quad \nu_{\mathcal{D}} = 0.49.
 \]
The domain is a square plate with a circular hole of radius 0.25\,m: $\Omega = (0,1)^2\setminus B_{0.25}(0.5,0.5)\,\text{m}^2$ and the domain boundaries are split into the bottom segment $\Gamma_D $ 
on which we impose zero displacements, and the remainder of the boundary $\Gamma_N$ where we prescribe normal stresses. On the top segment we set a time-dependent traction 
$\boldsymbol{g}_N = \bigl(0,-\frac12H(t\leq 1)\sin(\pi t/5)\bigr)^{\tt t}$, where $H$ is the Heaviside function (meaning that a sinusoidal load is applied on the top edge until $t=1$\,s and then it is suddenly released), and on the vertical and circle sub-boundaries we set a traction free condition $(\bgam + \bze)\bn = \boldsymbol{0}$. 
 We use the numerical method in \eqref{fullyDiscretePb1-Pb2} with polynomial degree $k=2$ (yielding an overall quadratic order of convergence) and consider a final time of $T = 15$\,s, with a time step of $\Delta t = 0.1$\,s. The  unstructured mesh  contains 23'275 triangular elements. We plot in the left panels of Figure~\ref{fig:plates} three snapshots (at times 0.5\,s, 1\,s, 1.5\,s) of the elastic and viscous stress magnitudes portrayed on the deformed domain (where the displacement is magnified by a factor of 10 to assist a better visualisation). For comparison we also plot (in the third row) solution snapshots for  the purely elastic case (and where, after removing the load, the body immediately goes back to the undeformed configuration). Moreover, we  show in the rightmost  three panels, the evolution (only until $t =7$\,s) of axial stress in the vertical direction (in KPa) and vertical displacements (in m) at a point located between the circular hole and the top sub-boundary, as well as the dynamic behaviour of the $L^2-$norms of stresses and displacement. The results illustrate the expected dissipation property.

\begin{figure}[t!]
\begin{center}
\includegraphics[width=0.25\textwidth]{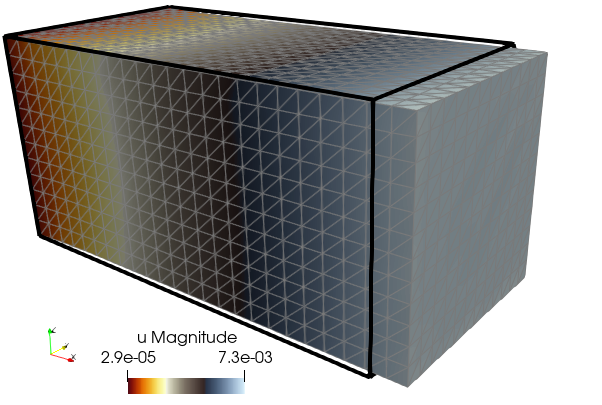}
\includegraphics[width=0.25\textwidth]{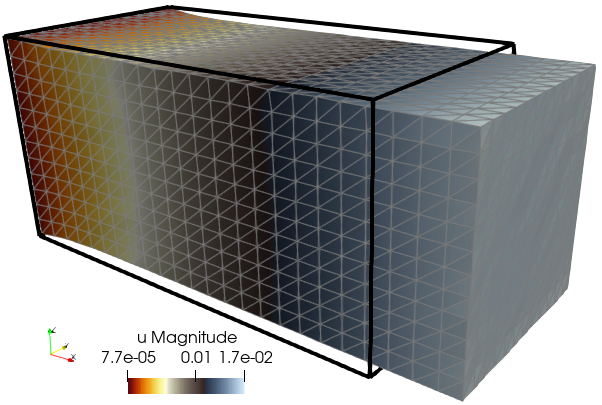}
\includegraphics[width=0.25\textwidth]{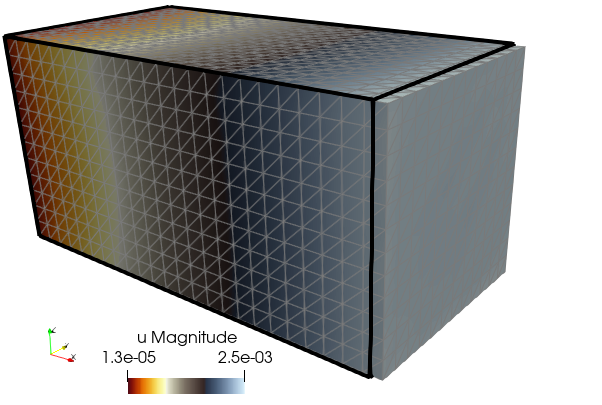}\\
\includegraphics[width=0.25\textwidth]{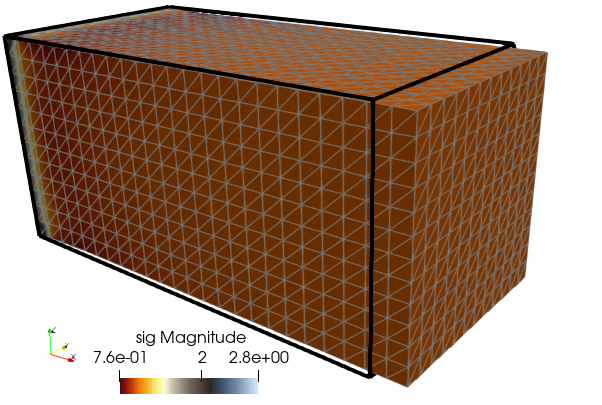}
\includegraphics[width=0.25\textwidth]{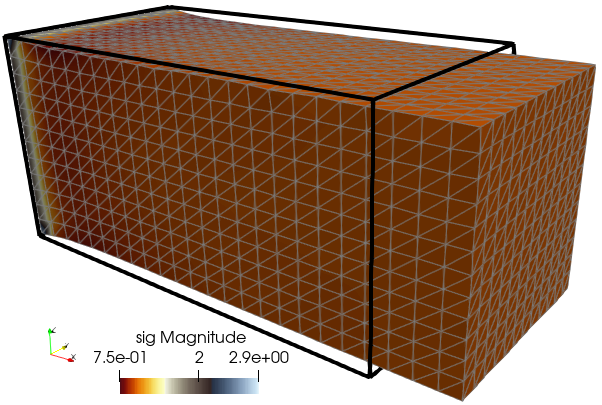}
\includegraphics[width=0.25\textwidth]{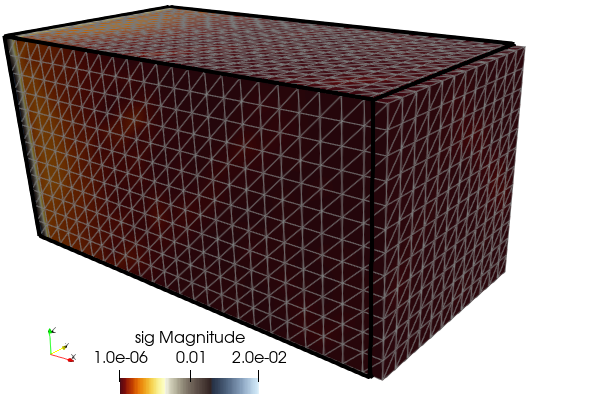}\\
\includegraphics[width=0.245\textwidth]{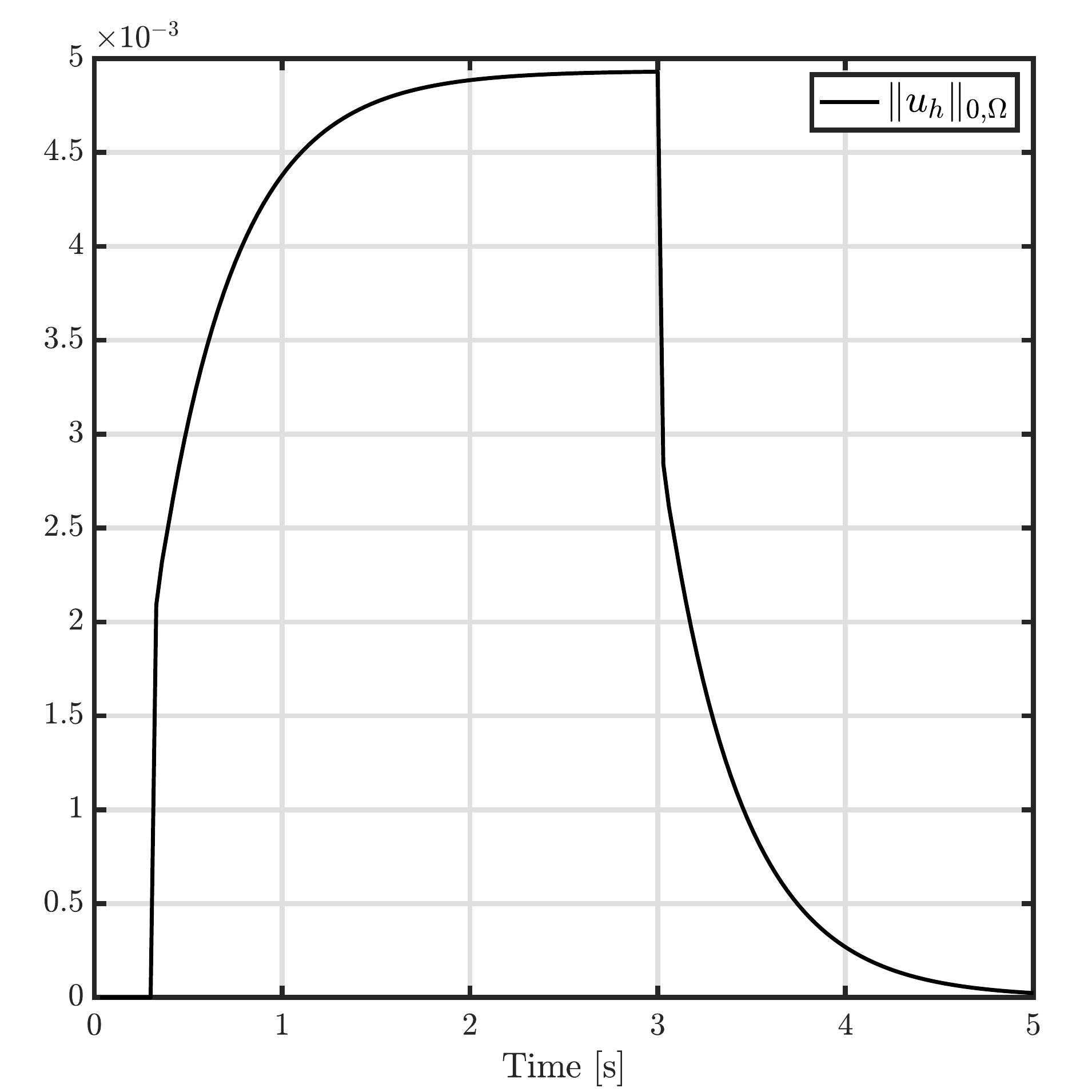}
\includegraphics[width=0.245\textwidth]{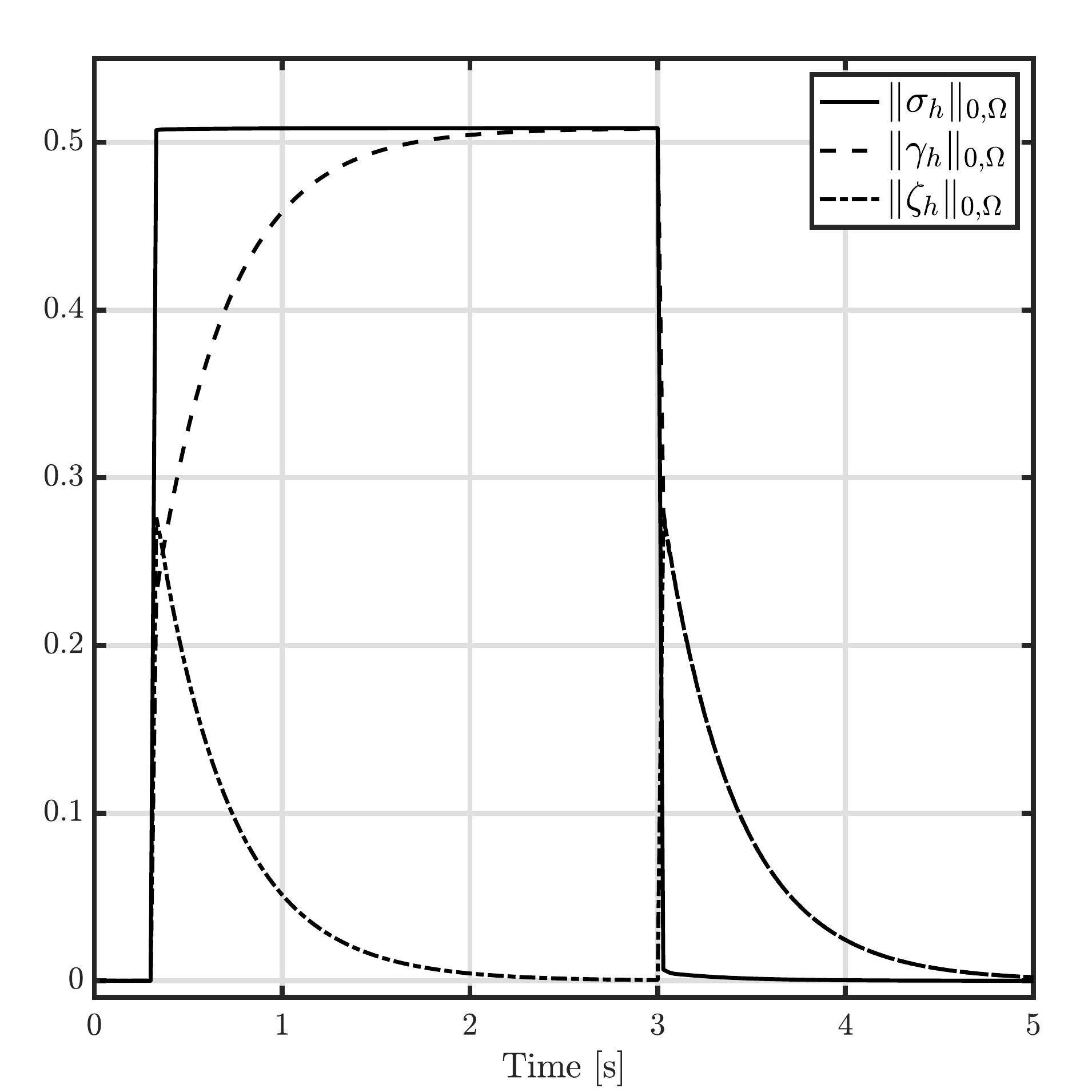}
\includegraphics[width=0.245\textwidth]{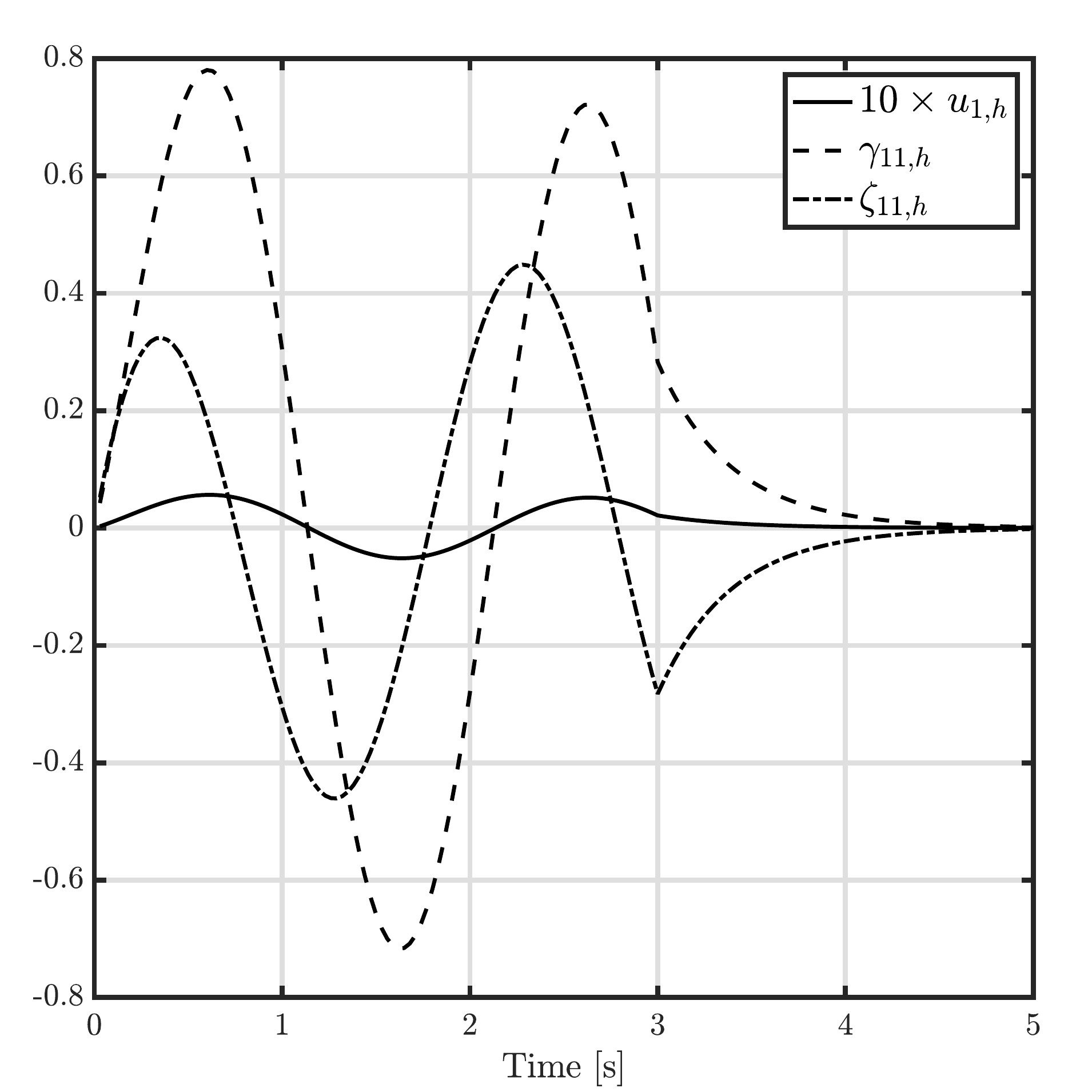}
\includegraphics[width=0.245\textwidth]{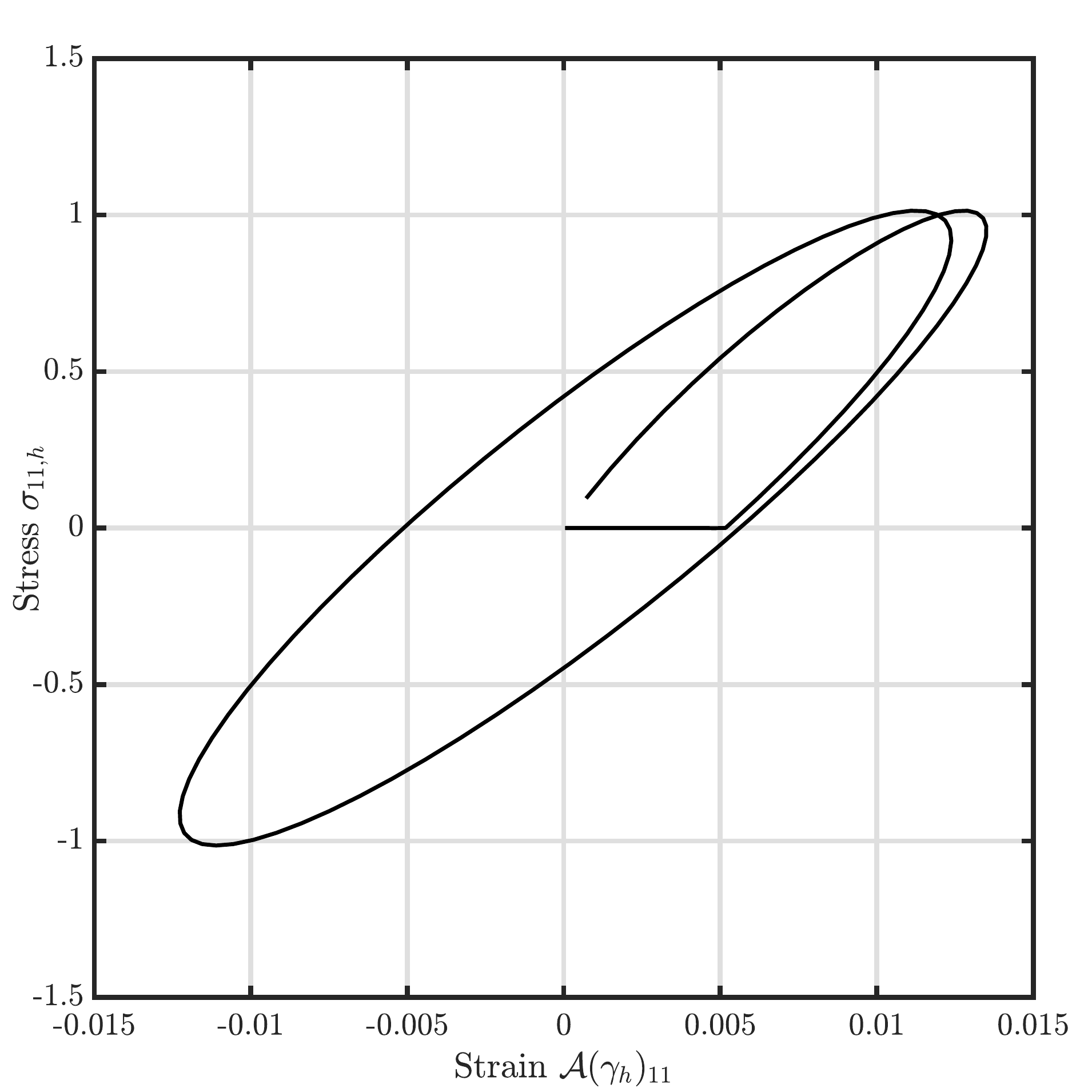}
\end{center}

\vspace{-5mm}
\caption{Example 3. Snapshots at 10x displacement magnification taken at times  0.33\,s, 3\,s, 3.6\,s (left, middle, right) of displacement (top) and distribution of total stress (middle row) for the creep of a viscoelastic slab. Solutions obtained with $k=1$. Bottom left: transients of $L^2-$norms of displacement, and elastic, viscous, and total  stresses  for the case of creep. Bottom right: transients at the point (0.5,0.25,0.25) of $x-$displacement (magnified) and $xx-$stresses; and stress-strain curve for the cyclic loading case.}\label{fig:creep}
\end{figure}

\medskip
\noindent\textbf{Example 3: Creep  and cyclic loading of a viscoelastic slab.} The model and the implementation are tested on a 3D scenario by computing numerical solutions of Zener's model on the domain $\Omega = (0,1)\times(0,\frac12)\times(0,\frac12)$\,m$^3$. The  following Lam\'e constants are employed 
 \[ 
 \mu_{\mathcal{C}} = 20\, \text{Pa}, \quad \lambda_{\mathcal{C}} = 100\, \text{Pa},\quad 
 \mu_{\mathcal{D}}= 50\, \text{Pa}, \quad \lambda_{\mathcal{D}} = 200\, \text{Pa}.
 \]
We run different tests associated with response to applied traction. Firstly creep (a constant normal stress is imposed over a short period and then released) and then with cyclic loading (the applied traction is periodic, generating an oscillatory deformation pattern). For the first case, an instantaneous traction (in the $x$ direction) of intensity 1\,Pa is applied on the sub-boundary at $x=1$ at $t = 0.3$\,s during   2.7\,s and then it is suddenly released. The boundary located at $x=0$ is maintained clamped, and the remaining parts of the boundary are considered stress-free. 
The test is run until $t = 5$\,s and the behaviour of the different stress components is plotted in the bottom-left panels of Figure~\ref{fig:creep}. The obtained profiles show the evolution of the $L^2$-norms of the approximate viscous and elastic stress as well as of postprocessed displacements. These results are qualitatively comparable to the expected behaviour shown in, e.g., \cite[Section 6.2.2 and Figure 6]{rognes} for 2D tests with the standard linear model and without inertia (that is, an instantaneous displacement increase at $t=0.3$\,s, and the decrease of viscous stress and increase of elastic stress needed to maintain a constant total stress, all of them eventually decaying with time). For the case of a slab subjected to cyclic loading, we apply the traction $(0.5\sin(\pi t)H(t\leq 3),0,0)^{\tt t}$ on the sub-boundary at $x=1$, and record in the bottom-right plots of Figure~\ref{fig:creep} the evolution of $x-$displacement and $xx-$stresses, and we also plot the stress--strain curve (where the strain tensor is accessed through the inverse constitutive equation $\cA\bgam = \beps(\bu)$) at the midpoint of the domain, exhibiting the typical viscoelastic behaviour. The remaining parameter values are $\rho = 10^{-3}$\,Kg/m$^3$, $\texttt{a}^* = 15$, $\omega = 1/6$\,s. We use a time step of $\Delta t = 0.03$\,s and a structured tetrahedral mesh with 20'736 elements, which, for $k=1$,  represents 995'328 DoF. 


\begin{figure}[t!]
\begin{center}
\raisebox{5mm}{\includegraphics[width = 0.24\textwidth]{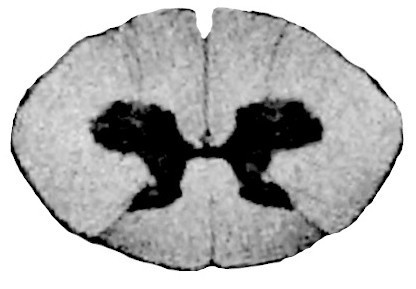}}
\includegraphics[width = 0.26\textwidth]{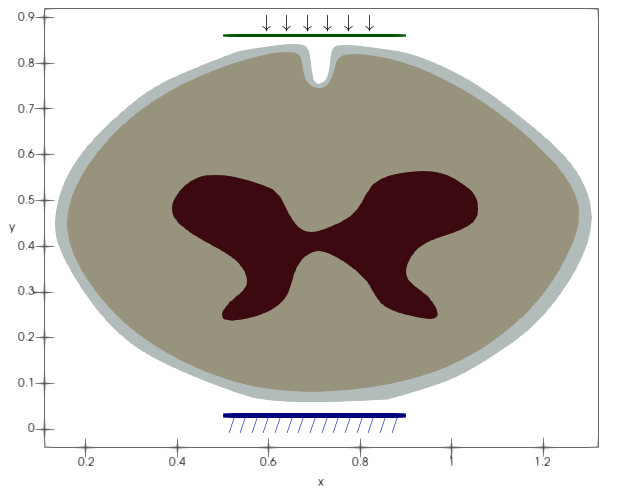}
\raisebox{4mm}{\includegraphics[width = 0.24\textwidth]{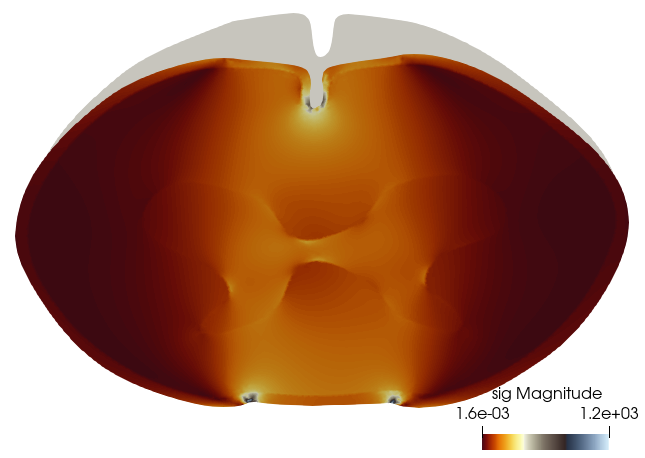}}
\raisebox{4mm}{\includegraphics[width = 0.24\textwidth]{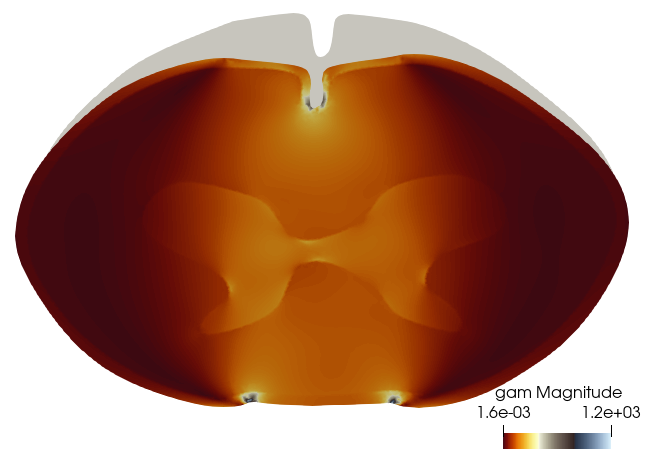}}\\
\includegraphics[width = 0.245\textwidth]{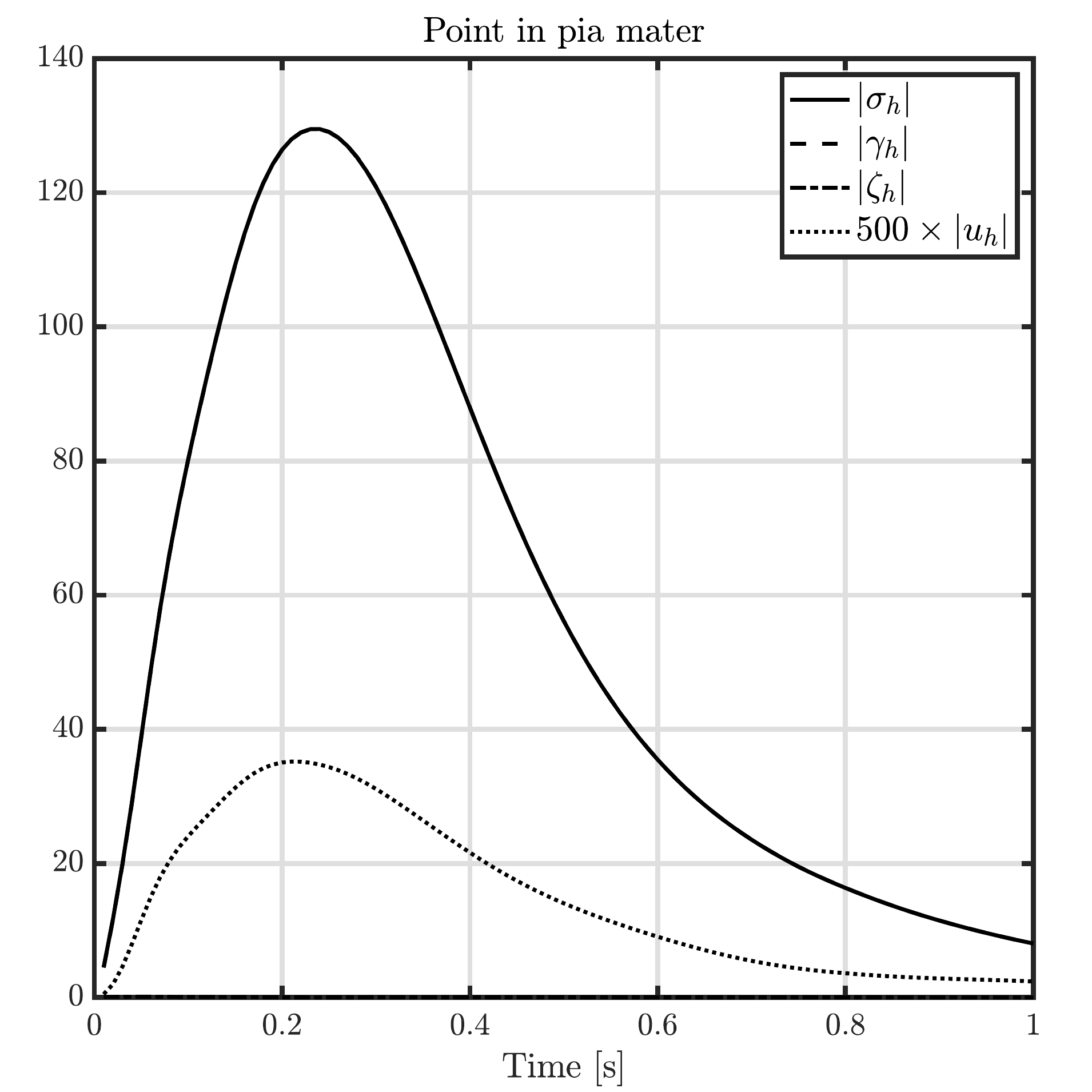}
\includegraphics[width = 0.245\textwidth]{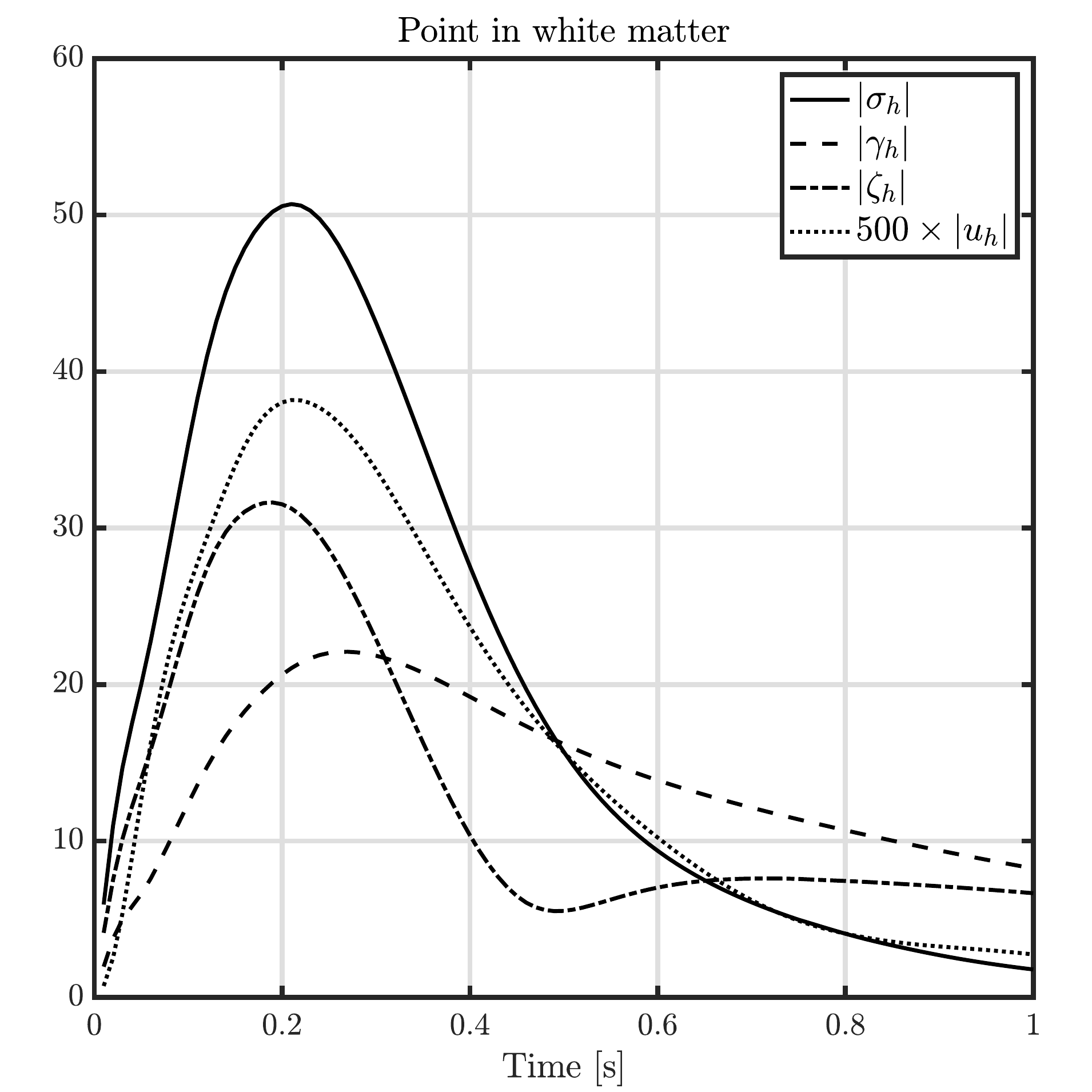}
\includegraphics[width = 0.245\textwidth]{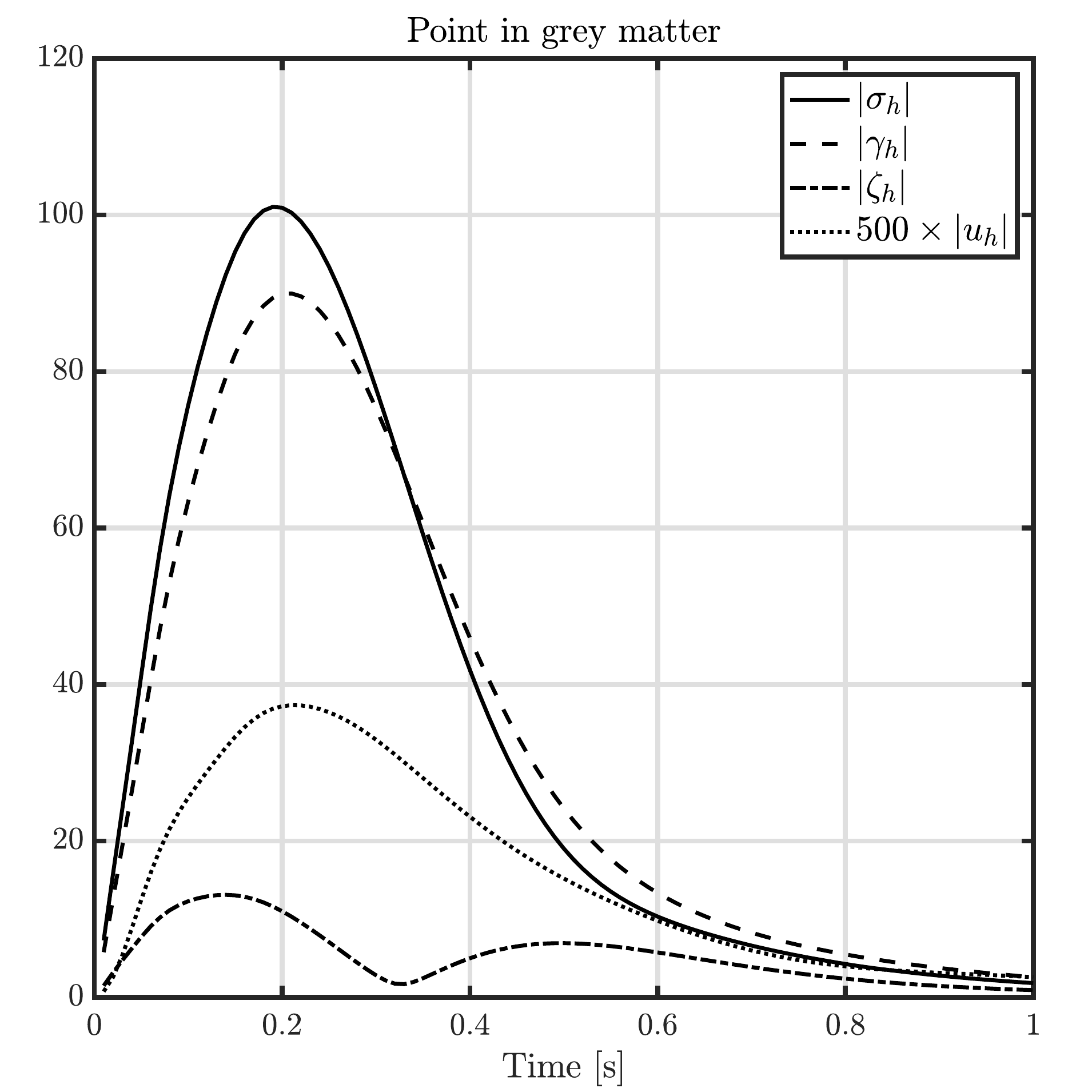}
\includegraphics[width = 0.245\textwidth]{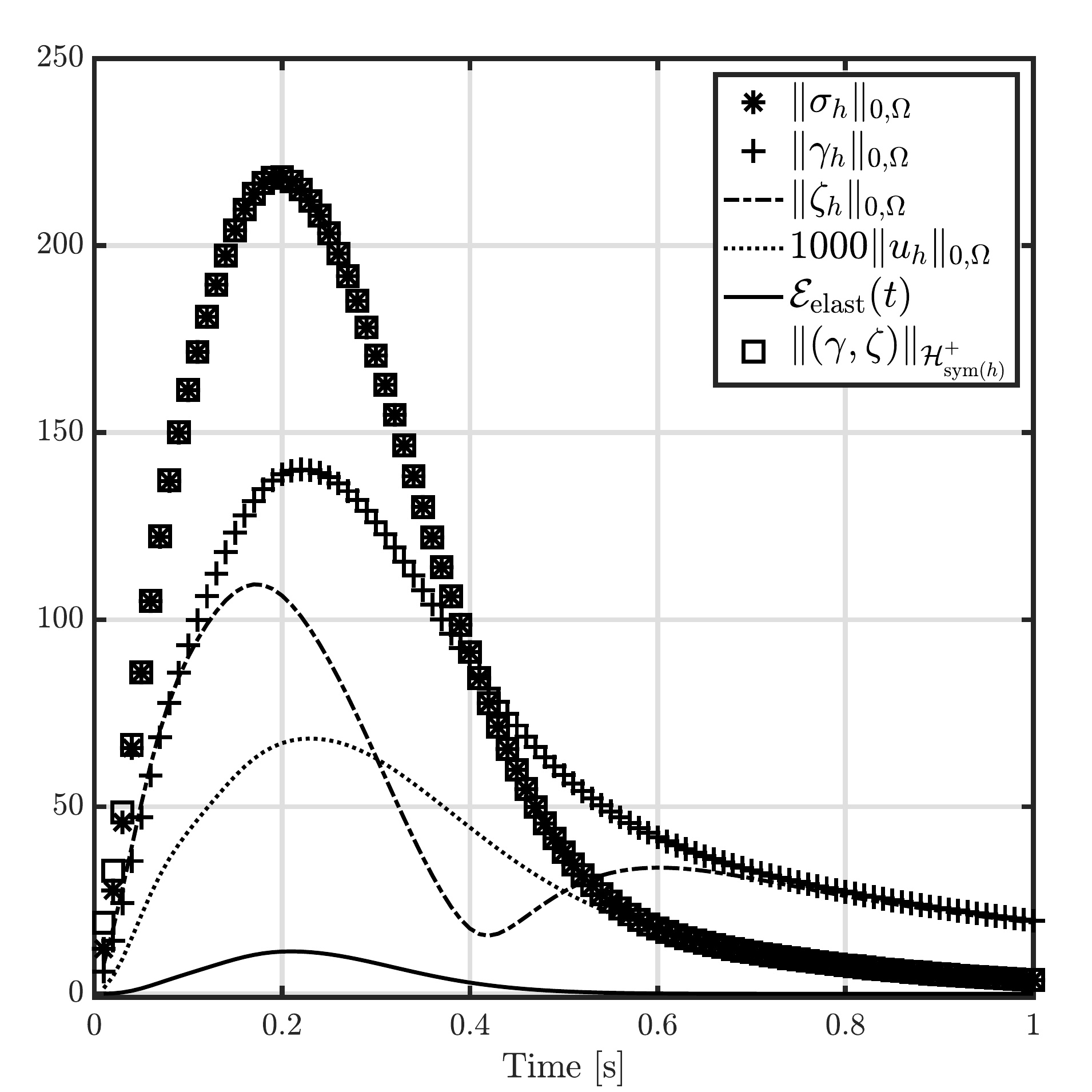}
\end{center}

\vspace{-5mm}
\caption{Example 4. Top left: Transversal cross-section of a cervical spinal cord with three material layers (from outer to inner: pia mater, white matter, grey matter), and schematic representation of front-to-back loading indicating regions where the outer layer is clamped (posterior, bottom), and where it undergoes an indentation   (anterior, top). Axes units are in cm. Top right: total and elastic stress after indentation during $T=0.35$\,s. 
Bottom: Transients of viscous and elastic stresses at  points in the pia mater, white matter, and grey matter. Bottom right: computed $L^2$-norms, energy $\mathcal{E}_{\mathrm{elast}}$, and stress energy norm.}\label{fig:spinal} 
\end{figure}
 
\medskip
\noindent\textbf{Example 4: Localisation of viscous stresses in a multilayered cross-section of spinal cord.} To conclude this section we consider a simple viscoelastic model for a segment of cervical spinal cord (consisting of white and grey matter), surrounded by the pia mater (represented as a thin layer of elastic material). The problem setup mimics indentation tests as in, e.g., \cite{rycman21,zhu20}, which in turn replicate problems arising due to degenerative factors. 
We only take a transversal cross-section of approximately 13\,mm in maximal diameter, and in this case the indentation region is simply a curved subset of the anterior part of the pia mater, having length 4\,mm.  The geometry and unstructured mesh have been generated from the  images in \cite{stover16} using the mesh manipulator GMSH \cite{gmsh}. In this region we will impose, as in the previous tests,  a traction $(\bgam+\bze)\bn = (0,-P)^{-\tt t}$, with $P$ a given time-dependent pressure profile with maximal amplitude 650\,Pa. The posterior part of the pia mater (a sub-boundary of length 4\,mm) will be considered as a rigid posterior support and therefore zero displacement boundary conditions will be prescribed. The remainder of the boundary (of the pia mater) is taken as stress-free (see the sketch in Figure~\ref{fig:spinal}, top left).

An advantage of the DG-based formulation advanced herein is that it permits us to readily consider discontinuous material parameters. 
For the three different layers of the domain we use the following values for   Young modulus and Poisson ratio (values from \cite{kylstad,stover16,zhu20}, see also \cite{klatt,rycman21}), 
\begin{gather*}
E^{\mathrm{pia}} = 2300\,\text{Pa}, \quad \nu^{\mathrm{pia}} = 0.3, \quad  E^{\mathrm{white}}_{\mathcal{C}} = 840\, \text{Pa}, \quad \nu^{\mathrm{white}}_{\mathcal{C}} = 0.479, \quad  
 E^{\mathrm{white}}_{\mathcal{D}}= 2030\, \text{Pa},\quad    \nu^{\mathrm{white}}_{\mathcal{D}} = 0.49,\\
 \quad E^{\mathrm{grey}}_{\mathcal{C}} = 1600\, \text{Pa}, \quad \nu^{\mathrm{grey}}_{\mathcal{C}} = 0.49 , \quad 
 E^{\mathrm{grey}}_{\mathcal{D}}= 2030\, \text{Pa},  \quad \nu^{\mathrm{grey}}_{\mathcal{D}} = 0.49. 
\end{gather*}
Note that in \cite{zhu20} the pia matter is considered elastic and the white and grey matter subdomains are considered hyperelastic, in \cite{rycman21} there is only pia mater and homogeneous spinal cord (all visco-hyperelastic), in \cite{kylstad}  the spinal cord is homogeneous and linear viscoelastic, whereas in \cite{stover16} a poroelastic model has been used for all layers. Here the elastic behaviour of the pia mater is modelled with a  much smaller value than in the rest of the domain, $\omega^{\mathrm{pia}} = 1/1000\,\text{s} < \omega^{\mathrm{white,grey}} =  1/6.7$\,s).  A fixed time step $\Delta t = 0.01$\,s is used and we run the simulation until $T=1$\,s.  The top-centre and top-right panels of Figure~\ref{fig:spinal} shows a sample of deformed configuration and distribution of total stress and elastic stress magnitude at time $t=0.35$\,s. 
For this problem we investigate numerically the   decay  of the elastic energy $\frac12 \int_\Omega \bsig:\beps(\bu) $, which, using the definition of viscous and elastic stress contributions, can be written as 
\[\mathcal{E}_{\mathrm{elast}}(t)=\frac12 \int_\Omega\mathcal{A}(\bgam): (\bgam+\bze).\] 
In addition, we also track the value of the energy norm defined in \eqref{norm:sym}.  These quantities are plotted in the middle and bottom rows of Figure~\ref{fig:spinal} together with transients of the principal stresses and displacements at three different locations in the domain layers.  The deformation vs load, as well as the stress distribution on the white and grey matter regions is qualitatively consistent with the behaviour reported in \cite{zhu20}. In the pia mater, as expected, only the elastic stress is visible.

\medskip 
\noindent\textbf{Acknowledgement.} We are thankful to Prof. Kent-Andr\'e Mardal for pointing out model parameters and data to use in Example 4. 



\end{document}